\documentclass[12pt]{amsart}
\usepackage{amsfonts,amssymb,a4wide,url,mathscinet,mathdots,mathtools}
\usepackage[all]{xypic}

\newcommand{\bs}{\backslash}
\newcommand{\Z}{\mathbb{Z}}
\newcommand{\Q}{\mathbb{Q}}
\newcommand{\R}{\mathbb{R}}
\newcommand{\C}{\mathbb{C}}
\newcommand{\A}{\mathbb{A}}
\newcommand{\inorm}{\operatorname{N}}
\newcommand{\nnn}{\mathfrak{n}}
\newcommand{\fin}{\operatorname{fin}}
\newcommand{\level}{\operatorname{level}}
\newcommand{\can}{\operatorname{Can}}

\renewcommand{\Re}{\operatorname{Re}}
\renewcommand{\Im}{\operatorname{Im}}
\newcommand{\AF}{{\mathcal A}}
\newcommand{\sprod}[2]{\left\langle#1,#2\right\rangle}
\newcommand{\rest}{\big|}
\newcommand{\Eisen}{\mathcal{E}}
\newcommand{\CC}{(CC)}
\newcommand{\aaa}{\mathfrak{a}}

\newcommand{\Tr}{\operatorname{Tr}}
\newcommand{\OO}{\mathcal{O}}
\newcommand{\vol}{\operatorname{vol}}
\newcommand{\Gal}{\operatorname{Gal}}
\newcommand{\GF}{($\Gamma$F)}
\newcommand{\Disc}{\Delta}
\newcommand{\FE}{(FE)}
\newcommand{\LG}{{^L}G}
\newcommand{\LP}{{^L}P}
\newcommand{\LM}{{^L}M}
\newcommand{\LU}{{^L}U}
\newcommand{\expcond}{\mathfrak{e}}
\newcommand{\Sym}{\operatorname{Sym}}
\newcommand{\Asai}{\operatorname{As}}
\newcommand{\swrz}{\mathcal{S}}
\newcommand{\JL}{\operatorname{JL}}
\newcommand{\Ar}{\operatorname{Ar}}
\newcommand{\Std}{\operatorname{St}}
\newcommand{\aut}{\operatorname{aut}}
\newcommand{\disc}{\operatorname{disc}}
\newcommand{\cusp}{\operatorname{cusp}}
\newcommand{\GL}{\operatorname{GL}}
\newcommand{\SL}{\operatorname{SL}}
\newcommand{\Res}{\operatorname{Res}}
\newcommand{\red}{\operatorname{red}}
\newcommand{\abs}[1]{\left|{#1}\right|}
\newcommand{\norm}[1]{\lVert#1\rVert}
\newcommand{\iii}{\mathrm{i}}
\newcommand{\modulus}{\delta}
\newcommand{\Ad}{\operatorname{Ad}}
\newcommand{\arithcond}{\mathfrak{n}}
\newcommand{\infcond}{\mathfrak{c}}
\newcommand{\cond}{\mathfrak{c}}
\newcommand{\Lie}{\operatorname{Lie}}
\newcommand{\unilen}{l} 
\newcommand{\prpr}[1]{\hat #1}
\newcommand{\modM}{\tilde M_{\alpha}}

\newcommand{\K}{\mathbf{K}}
\newcommand{\id}{\operatorname{id}}
\newcommand{\PPP}{\mathcal{P}}
\newcommand{\rts}{\Sigma}
\newcommand{\SC}{\operatorname{sc}}
\newcommand{\scprj}{p^{\SC}}
\newcommand{\diag}{\operatorname{diag}}
\newcommand{\der}{\operatorname{der}}
\newcommand{\param}{\Lambda}
\newcommand{\Crtn}{\mathfrak{h}}
\newcommand{\Lieg}{\mathfrak{g}}
\newcommand{\Tate}{\operatorname{Tate}}
\newcommand{\GJ}{\operatorname{GJ}}
\newcommand{\JPSS}{\operatorname{JPSS,Sh}}
\newcommand{\PSR}{\operatorname{PSR}}
\newcommand{\Sh}{\operatorname{Sh}}
\newcommand{\gnr}{\bullet}
\newcommand{\sm}[4]{\left(\begin{smallmatrix}{#1}&{#2}\\{#3}&{#4}\end{smallmatrix}\right)}
\newcommand{\Nrd}{\operatorname{Nrd}}
\newcommand{\Trd}{\operatorname{Trd}}
\newcommand{\F}{\mathbb{F}}

\newtheorem{theorem}{Theorem}[section]
\newtheorem{lemma}[theorem]{Lemma}
\newtheorem{definition}[theorem]{Definition}
\newtheorem{proposition}[theorem]{Proposition}
\newtheorem{corollary}[theorem]{Corollary}
\theoremstyle{remark}
\newtheorem{remark}[theorem]{Remark}

\begin{document}

\title[Analytic properties of intertwining operators I]{On the analytic properties of intertwining operators I: global normalizing factors}
\date{\today}
\dedicatory{To Freydoon Shahidi, for his upcoming 70th birthday}
\author{Tobias Finis}
\address{Universit\"{a}t Leipzig, Mathematisches Institut, Postfach 100920, D-04009 Leipzig, Germany}
\email{finis@math.uni-leipzig.de}
\author{Erez Lapid}
\address{Department of Mathematics, Weizmann Institute of Science, Rehovot 7610001, Israel}
\thanks{Second named author partially supported by a grant from the Minerva Stiftung}
\email{erez.m.lapid@gmail.com}

\setcounter{tocdepth}{1}

\begin{abstract}
We provide a uniform estimate for the $L^1$-norm (over any interval of bounded length) of the logarithmic derivatives
of global normalizing factors associated to intertwining operators for the following reductive groups over number fields:
inner forms of $\GL(n)$; quasi-split classical groups and their similitude groups; the exceptional group $G_2$.
This estimate is a key ingredient in the analysis of the spectral side of Arthur's trace formula.
In particular, it is applicable to the limit multiplicity problem studied by the authors in earlier papers.
\end{abstract}

\maketitle
\tableofcontents

\section{Introduction}
In this paper we study the analytic properties of the global intertwining operators associated to parabolic subgroups
of reductive groups $G$ over number fields $F$.
In the previous papers \cite{MR3352530} (joint with Werner M\"{u}ller) and \cite{1504.04795},
we defined certain properties (TWN) and (BD) pertaining to these intertwining operators,
and showed that these two properties together imply the solution of the limit multiplicity problem for congruence subgroups
of lattices contained in $G(F)$.
Property (TWN) is a global property concerning the scalar-valued normalizing factors, while (BD) is essentially a local property.
In \cite{MR3352530}, these properties were verified for the groups $\GL (n)$ and $\SL(n)$.

This paper is devoted to establishing property (TWN) in a number of other cases, namely for inner forms of the groups $\GL(n)$ and $\SL(n)$,
for quasi-split classical groups and their similitude groups, and for the exceptional group $G_2$.
In fact, we prove without any more effort a finer estimate (already mentioned in \cite{MR3352530}), which we call property (TWN+).
In addition to the application to the limit multiplicity problem, this property may be useful in the study of other asymptotic questions,
where it is important to control the dependence on the archimedean parameters.
The main examples of such problems are Weyl's law with remainder term and the asymptotics of Hecke operators in general families
\cite{MR2541128, 1310.6525, 1505.07285}.
In any case, the results of this paper are sufficient to extend the limit multiplicity results of
\cite{MR3352530,1504.04795} to inner forms of $\GL(n)$ and $\SL(n)$.
This will be discussed in a future paper, where we will also study property (BD) for general groups $G$, and prove a weaker variant.
This, together with the results of the current paper, will allow for an extension of the limit multiplicity results to quasi-split classical groups
and their similitude groups and $G_2$ (at least under a technical restriction on the congruence subgroups in question).

We sketch the definition of property (TWN+), which is explained in more detail in \S \ref{TWN} below.
As usual, fix a minimal Levi subgroup $M_0$ of $G$ defined over $F$, and consider a proper Levi subgroup $M$ of $G$ containing $M_0$ and a root
$\alpha \in \rts_M$.
Let $U_\alpha$ be the unipotent subgroup of $G$ corresponding to $\alpha$, $M_\alpha$ the group generated by $M$ and $U_{\pm\alpha}$,
and $\prpr{M_\alpha}$ its $F$-simple normal subgroup generated by $U_{\pm\alpha}$.
The group $\modM=\prpr{M_\alpha}\cap M$ is a maximal Levi subgroup of $\prpr{M_\alpha}$.
For $\pi\in\Pi_{\disc}(M(\A))$ let $n_\alpha(\pi,s)$ be the normalizing factor for the
global intertwining operators associated to pairs of parabolic subgroups of $G$ adjacent along $\alpha$.
These factors are meromorphic functions of finite order of the complex variable $s$ and satisfy the functional equation
$\abs{n_\alpha (\pi,\iii t)} = 1$ for all $t \in \R$.

We say that $G$ satisfies property (TWN+) (tempered winding numbers, strong version) if
we have the estimate
\[
\int_T^{T+1}\abs{n_\alpha'(\pi,\iii t)}\ dt\ll\log(\abs{T}+\param(\pi_\infty;\scprj)+\level(\pi;\scprj))
\]
for all $\pi\in\Pi_{\disc}(M(\A))$ and all real numbers $T$ where the implied constant depends only on $G$. Here,
$\level(\pi;\scprj)$ is a certain variant of the usual notion of the level of $\pi$ restricted to $\modM$,
and $\param(\pi_\infty;\scprj)$ measures the size of the infinitesimal character of the restriction of $\pi_\infty$ to $\modM$.
(See \S\ref{sec: relparams} for the precise definitions.)
We note that property (TWN+) implies property (TWN) introduced in \cite[Definition 5.2]{MR3352530}, and that property (TWN+)
has been shown (as a consequence of the theory of Rankin-Selberg $L$-functions) for the groups $\GL (n)$ and $\SL (n)$ in [ibid., Proposition 5.5].

Our method of proving (TWN+) for the groups listed above, is to use functoriality to transfer the problem to a well-understood problem
for $\GL (n)$. We start by an axiomatic treatment of automorphic $L$-functions in \S \ref{SectionLogDerivatives}, which we then apply
to the global normalizing factors in \S \ref{TWN}. In \S \ref{sec: goodLexamples} we will show (TWN+) for inner forms of $\GL (n)$ and $\SL (n)$
using the Jacquet-Langlands correspondence (obtained in general in \cite{MR2390289}, \cite{MR2684298}), which allows us to reduce the problem
to known properties of Rankin-Selberg $L$-functions.

In \S \ref{SectionClassical}, we will consider quasi-split classical groups.
We first consider the twisted exterior and symmetric square $L$-functions for $\GL (n)$,
as well as the Asai $L$-function for $\Res_{E/F}\GL (n)$, using known results obtained by the Langlands-Shahidi method and by the study of integral representations.
Using Arthur's work on functoriality from the classical groups to  $\GL (n)$ \cite{MR3135650},
extended by Mok to unitary groups \cite{MR3338302},
we will be able once again to reduce the remaining $L$-functions to Rankin-Selberg $L$-functions for $\GL (n)$.

Our approach is based on Arthur's work, which requires the full force of the stable twisted trace formula.
Among the prerequisites of Arthur's results are
\cite{MR1687096, MR2979862, MR1444087, MR2437683, MR3026269, MR2448443, MR2418405, 1205.1100, 1401.4569, 1401.7127,
1402.2753, 1403.1454, 1404.2402, 1406.2257, 1409.0960, 1410.1124, 1412.2565, 1412.2981}, to mention a few.
One may contemplate whether there is a different approach to the problem which avoids functoriality (and is perhaps applicable to other groups).
Unfortunately, at the moment we cannot say anything in this direction.

For the exceptional group $G_2$, which we treat in \S \ref{SectionExceptional}, we need to consider the (twisted) symmetric cube $L$-function for $\GL (2)$
which was studied by Kim--Shahidi.

\subsection{}
It is a pleasure to dedicate this paper to Freydoon Shahidi for his upcoming 70th birthday.
Shahidi's influence on the field on automorphic forms cannot be overestimated. Needless to say, the current paper also owes a lot to his work.
On a personal level it has always been a pleasure to interact with Freydoon and we wish him the very best.

\section{Estimates for logarithmic derivatives of $L$-functions} \label{SectionLogDerivatives}

We begin with an axiomatic treatment of automorphic $L$-functions which isolates the precise properties needed for the main estimate (see Proposition \ref{lem: logderbnd} below).

\subsection{}
Let us first recall some generalities about $L$-functions.
Let $G$ be a reductive group over a number field $F$ and let $\A$ be the ring of adeles of $F$.
Let $\A_{\fin}$ be the ring of finite adeles of $F$ and $F_\infty = F \otimes \R$.
Let $\abs{\cdot}_{\A^\times}$ be the idele norm on $\A^\times$.
Let $T_G$ be the $\Q$-split part of the (Zariski) connected component of the center of $\operatorname{Res}_{F/\Q} G$
(restriction of scalars) and let $A_G=T_G(\R)^\circ$ (topological connected component), viewed as a subgroup of $T_G(\A_\Q)$ (and hence of $G(\A)$).
Let $G(\A)^1 \subset G(\A)$ be the intersection of the kernels $\operatorname{ker} \abs{\chi}_{\A^\times}$ as $\chi$ ranges over
the $F$-rational characters of $G$. Then $G(\A)$ is the direct product of $G(\A)^1$ and $A_G$.
Let $\mathfrak{a}_G$ be the Lie algebra of $A_G$, a real vector space, and $\mathfrak{a}_G^*$ its dual space.
We set $\mathfrak{a}_{G,\C}^* = \mathfrak{a}_G^* \otimes \C$.

We write $\Pi_{\disc}(G(\A))$ for the set of equivalence classes of automorphic representations of $G(\A)$
which occur in the discrete spectrum of $L^2(A_GG(F)\bs G(\A))$.
We will also write $\Pi_{\cusp}(G(\A))$ for the subset of cuspidal representations.

For any $\pi=\otimes_v\pi_v\in\Pi_{\disc}(G(\A))$ let $S(\pi)$ be the finite set of places of $F$ such that at least one of the following conditions holds:
\begin{enumerate}
\item $v$ is archimedean.
\item $F/\Q$ is ramified at $v$.
\item $G$ is ramified at $v$, i.e., either $G$ is not quasi-split over $F_v$ or $G$ does not split over an unramified extension of $F_v$.
\item For every hyperspecial maximal compact subgroup $K_v$ of $G(F_v)$, $\pi_v$ does not have a nonzero vector invariant under $K_v$.
\end{enumerate}
(The exclusion of the finite places which are ramified for $F$ is inessential and is only made for convenience.)
We write $S(\pi)=S_\infty\cup S_f(\pi)$, where $S_\infty$ denotes the set of archimedean places of $F$ and $S_f(\pi)$
denotes the non-archimedean places in $S(\pi)$.
Let $S_{\Q, f}(\pi)$ (or simply $S_{\Q,f}$, if $\pi$ is clear from the context) be the set of rational primes which lie below the primes in $S_f(\pi)$.
Also set $S_{\Q}(\pi) = S_{\Q, f} (\pi)\cup\{\infty\}$.

Let $W_F$ be the Weil group of $F$ and let $\LG$ be the $L$-group of $G$ (cf.~\cite{MR546608}).
Let $r:\LG\rightarrow\GL(N,\C)$ be a continuous and $W_F$-semisimple $N$-dimensional representation of $\LG$.
For any $\pi\in\Pi_{\disc}(G(\A))$ and any place $v$ of $F$ outside of $S(\pi)$ we have the Hecke-Frobenius parameter $t_{\pi_v}\in\LG$.
To each such $v$ we associate the polynomial $P_v (X)=\det(1- X r(t_{\pi_v}))$ of degree $N$ and the local $L$-factor $L_v (s,\pi,r) = P_v (q_v^{-s})^{-1}$.
Since $\pi$ is unitary, the absolute values of the eigenvalues of $r (t_{\pi_v})$ are bounded by $q_v^\alpha$,
where $\alpha$ depends only on $G$ and $r$ (\cite{MR0302614}, \cite{MR546608}).
Therefore, for $S\supset S(\pi)$ the partial $L$-function
\[
L^S(s,\pi,r) = \prod_{v \notin S} L_v (s,\pi,r)
\]
is well-defined as an absolutely convergent product and holomorphic for $\Re s>\alpha+1$.
Because of the unitarity of $\pi$, we also have
\[
L^S(s,\pi,r^\vee)=\overline{L^S(\bar s,\pi,r)},
\]
where $r^\vee$ is the contragredient of $r$.
A cornerstone of the Langlands program is the (largely conjectural) meromorphic continuation of these $L$-functions with finitely many poles in $\C$.

As usual, we write $\Gamma_{\R}(s)=\pi^{-s/2}\mathbf{\Gamma}(s/2)$
and $\Gamma_{\C}(s)=2(2\pi)^{-s}\mathbf{\Gamma}(s)=\Gamma_{\R}(s)\Gamma_{\R}(s+1)$, where $\mathbf{\Gamma}(s)$ is the standard Gamma function.

\begin{definition}
We say that a pair $(G,r)$ has property \FE, if for any $\pi\in\Pi_{\disc}(G(\A))$ there exist rational functions $R_p$,
$p\in S_{\Q,f}(\pi)$, an integer $m\ge1$, a constant $C_\infty\in\C^*$ and parameters
$\alpha_1,\dots,\alpha_m,\alpha_1^\vee,\dots,\alpha_m^\vee\in\C$, such that the partial $L$-function $L^S (s,\pi,r)$
admits a meromorphic continuation to $\C$ with a functional equation of the form
\begin{equation} \label{eq: FE}
L^{S (\pi)}(s,\pi,r)=\big(\prod_{p\in S_{\Q} (\pi)}\gamma_p(s,\pi,r)\big) L^{S (\pi)}(1-s,\pi,r^\vee),
\end{equation}
or equivalently,
\[
L^{S (\pi)}(s,\pi,r)=\big(\prod_{p\in S_{\Q} (\pi)}\gamma_p(s,\pi,r)\big) \overline{L^{S (\pi)}(1-\bar{s},\pi,r)},
\]
where for each $p \in S_{\Q,f}(\pi)$, $\gamma_p (s,\pi,r) = R_p (p^{-s})$, and
\begin{equation} \label{eq: gammainfty}
\gamma_\infty(s,\pi,r)=C_\infty\prod_{i=1}^m\frac{\Gamma_{\R}(1-s+\alpha_i^\vee)}{\Gamma_{\R}(s+\alpha_i)}.
\end{equation}
\end{definition}

In general, this property is wide open except for a number of important special cases, some of which will be considered below.

\subsection{} \label{sec: indepgamma_p}
We first address the uniqueness of the terms in \eqref{eq: FE}.
This is of course standard. For completeness we give the details.


\begin{lemma} \label{lem: elem}
Assume that \eqref{eq: FE} is satisfied for some fixed $r$ and $\pi$.
Then
\begin{enumerate}
\item The factors $\gamma_p(s,\pi,r)$, $p\in S_{\Q}(\pi)$ (and hence the rational functions $R_p$, $p\in S_{\Q,f}(\pi)$), are uniquely determined up to non-zero constant multiples.
\item The integer $m$ is uniquely determined. (We call it the degree of $\gamma_\infty$.)
\item We have
\begin{equation} \label{eq: localgammaFE}
\gamma_p(s,\pi,r) \overline{\gamma_p(1-\bar{s},\pi,r)} = \text{const.}
\end{equation}
for all $p\in S_{\Q}(\pi)$.
\item The parameters $\alpha_1^\vee,\dots,\alpha_m^\vee$ are determined by $\alpha_1,\dots,\alpha_m$ (up to permutation).
\item \label{part: condinfty} The expression
\[
\big(\prod_{i=1}^m(1+\abs{\alpha_i-1/2})(1+\abs{\alpha_i^\vee-1/2})\big)^{\frac12}
\]
is uniquely determined. It is called the archimedean conductor and denoted by $\infcond_\infty(\pi,r)$ (cf.~\cite{MR1826269}).
\item We say that $\alpha_1,\dots,\alpha_m$ are \emph{reduced}, if $\alpha_i+\alpha_j^\vee$ is not a negative odd integer for any $1 \le i,j \le m$.
We may choose the parameters $\alpha_1,\dots,\alpha_m$ to be reduced. Moreover, in this case
\begin{enumerate}
\item The zeros (resp., poles) of $\prod_{i=1}^m\ {\Gamma_{\R}(s+\alpha_i)}^{-1}$
(resp., $\prod_{i=1}^m\ {\Gamma_{\R}(1-s+\alpha_i^\vee)}$) and $\gamma_\infty(s,\pi,r)$, taken with multiplicities,
coincide.
\item The multisets $\{\alpha_1,\dots,\alpha_m\}$ and $\{\alpha_1^\vee,\dots,\alpha_m^\vee\}$ are uniquely determined.
\item As a multiset we have
\begin{equation} \label{eq: complconjalpha}
\{\alpha_1^\vee,\dots,\alpha_m^\vee\}=\{\overline{\alpha_1},\dots,\overline{\alpha_m}\}.
\end{equation}
\end{enumerate}
\end{enumerate}
\end{lemma}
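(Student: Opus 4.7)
The plan is to exploit the sharp contrast between the analytic signatures of the two kinds of $\gamma$-factors: each $\gamma_p(s,\pi,r)=R_p(p^{-s})$ is $(2\pi\iii/\log p)$-periodic in $s$, with periods that are pairwise $\Q$-linearly independent across primes, whereas $\gamma_\infty$ is a ratio of products of $\Gamma_\R$-factors whose zero/pole divisor is discrete and, in any bounded imaginary-part strip, finite. For (1), I would suppose we have two FE representations with factors $\gamma_p,\tilde\gamma_p$ and take the quotient to get $\gamma_\infty(s)/\tilde\gamma_\infty(s)=\prod_{p}\tilde R_p(p^{-s})/R_p(p^{-s})$. If some $\tilde R_{p_0}/R_{p_0}$ were non-constant, it would contribute an arithmetic progression of zeros or poles along $s=s_0+2\pi\iii k/\log p_0$, $k\in\Z$; the progressions from different primes are $\Q$-linearly independent and the archimedean quotient has only finitely many zeros/poles in any bounded horizontal strip, so these infinitely many singularities cannot cancel, a contradiction. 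Hence each $\tilde R_p/R_p$ is constant, and so is $\tilde\gamma_\infty/\gamma_\infty$. For (2), Stirling's formula gives, for fixed $\sigma$ and $\abs T\to\infty$,
\[
\log\abs{\gamma_\infty(\sigma+\iii T)}= m\bigl(\tfrac12-\sigma\bigr)\log\abs T+O(1),
\]
so $m$ is recovered as minus the coefficient of $\sigma\log\abs T$ and is therefore an invariant of $\gamma_\infty$.

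For (3), substituting $s\mapsto 1-\bar s$ into the second form of \eqref{eq: FE} and taking complex conjugates produces a second representation of the same $L$-function; combining with \eqref{eq: FE} yields $\prod_p\gamma_p(s,\pi,r)\overline{\gamma_p(1-\bar s,\pi,r)}=1$. Each factor is once again a rational function of $p^{-s}$ at finite primes and a quotient of $\Gamma_\R$-products at infinity, so (1) forces each of them to be a constant.

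For (4), (5), and (6), I would combine (3) with a direct analysis of the divisor of $\gamma_\infty$. Expanding $\gamma_\infty(s)\overline{\gamma_\infty(1-\bar s)}=c$ and rearranging to separate the $\Gamma_\R(1-s+\cdot)$-factors from the $\Gamma_\R(s+\cdot)$-factors (whose pole progressions run to $+\infty$ and $-\infty$ along the real axis respectively and therefore cannot interfere), both types must match individually as multisets of Gamma-factors, yielding $\{\alpha_1^\vee,\dots,\alpha_m^\vee\}=\{\overline{\alpha_1},\dots,\overline{\alpha_m}\}$. This gives (4) and (6)(c). The reduced condition is precisely the condition that the numerator-poles $s=1+\alpha_i^\vee+2k$ and denominator-poles $s=-\alpha_j-2k$ do not coincide; under it, the multisets of parameters can be read off directly from the divisor of $\gamma_\infty$ by the standard "peel off the rightmost/leftmost pole or zero" procedure, proving (6)(a)-(b). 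A reduced representation always exists: any offending pair with $\alpha_i+\alpha_j^\vee=-(2k+1)$ can be resolved by the substitution $(\alpha_i,\alpha_j^\vee)\mapsto (1-\alpha_j^\vee,1-\alpha_i)$, which by iterated application of $\Gamma_\R(z+2)=(z/2\pi)\Gamma_\R(z)$ leaves $\gamma_\infty$ unchanged (modulo a constant absorbed into $C_\infty$). This same substitution permutes the multiset $\{\abs{\alpha_i-\tfrac12},\abs{\alpha_j^\vee-\tfrac12}\}$, so $\infcond_\infty(\pi,r)$ is invariant under it, establishing (5) in tandem.

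The technical crux is the separation-of-periodicities argument underlying (1); a clean write-up rests on the $\Q$-linear independence of $\{\log p:p\text{ prime}\}$, coming from unique factorization, in order to rule out the accidental cancellation of the infinitely many periodic singularities that a non-constant $R_p/\tilde R_p$ would produce.
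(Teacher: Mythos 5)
Parts (1) and (2) are fine. For (1) you implicitly use the same separation that the paper uses (archimedean singularities lie in a bounded horizontal band, while a non-constant $R_p$-quotient contributes a full vertical arithmetic progression of period $2\pi/\log p$, and those periods are distinct across primes); the only slip is the phrase \emph{``finitely many zeros/poles in any bounded horizontal strip''} --- you mean in any \emph{vertical} strip (or bounded rectangle), since the archimedean factor has infinitely many zeros/poles in a horizontal strip, going off to $\Re s = -\infty$. For (2) you use Stirling where the paper compares divisors; both work.

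The real gap is in your treatment of (4) and (6)(c). You claim that in the identity $\gamma_\infty(s)\,\overline{\gamma_\infty(1-\bar s)}=\text{const}$ the $\Gamma_\R(1-s+\cdot)$-factors and $\Gamma_\R(s+\cdot)$-factors ``cannot interfere'' because their poles march to $+\infty$ and $-\infty$ respectively, and you then conclude the multiset equality $\{\alpha_i^\vee\}=\{\overline{\alpha_i}\}$. But they \emph{can} interfere: a pole at $s=1+\alpha_i^\vee+2k$ coincides with one at $s=-\alpha_j-2l$ exactly when $\alpha_i+\alpha_j^\vee$ is a negative odd integer, which is precisely what the reduced condition rules out. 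Since you invoke this separation \emph{before} reducing, the argument is circular. Moreover, the claimed multiset equality is simply false for non-reduced data: take $m=2$, $\alpha=(-1,2)$, $\alpha^\vee=(1,0)$; then $\gamma_\infty(s)=C\,\Gamma_\R(2-s)\Gamma_\R(1-s)\big/\big(\Gamma_\R(s-1)\Gamma_\R(s+2)\big)$ satisfies $\gamma_\infty(s)\,\overline{\gamma_\infty(1-\bar s)}=|C|^2$ (all $\Gamma_\R$'s cancel after applying $\Gamma_\R(z+2)=\tfrac{z}{2\pi}\Gamma_\R(z)$, leaving $\tfrac{(-s)(s-1)}{s(1-s)}=1$), yet $\{1,0\}\neq\{-1,2\}$. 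Part (4) does not need the multiset equality at all: once (1) pins down $\gamma_\infty$ up to a constant and the $\alpha_i$ are given, $\prod_i\Gamma_\R(1-s+\alpha_i^\vee)=\text{const}\cdot\gamma_\infty(s)\prod_i\Gamma_\R(s+\alpha_i)$, and the pole multiplicities of the left side recover $\{\alpha_i^\vee\}$. Part (6)(c) should be established only \emph{after} (6)(a)--(b): in the reduced case the zeros of $\gamma_\infty$ are exactly $\bigcup_i(-\alpha_i-2\Z^{\geq 0})$ and the poles of $\overline{\gamma_\infty(1-\bar s)}$ are $\bigcup_i(-\overline{\alpha_i^\vee}-2\Z^{\geq 0})$, and (3) forces these sets to coincide. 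This is the order the paper uses. Your treatment of (5) is a legitimate alternative to quoting Serre's appendix, provided you first establish (6)(b) (uniqueness of the reduced parametrization) and the termination of the reduction process, since you need to know that any two presentations reduce to the \emph{same} reduced one.
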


\begin{proof}
Suppose that we are given a function of the form
\[
\phi(s)= C A^s \prod_{i=1}^m\frac{\Gamma_{\R}(1-s+\alpha_i^\vee)}{\Gamma_{\R}(s+\alpha_i)}
\prod_{p\in S_{\Q, f}(\pi)} \tilde{R}_p(p^{-s}),
\]
where $C$ is a non-zero constant, $A$ is a positive real number and for all $p\in S_{\Q, f}(\pi)$,
$\tilde{R}_p$ is a rational function with $\tilde{R}_p (0) = 1$.
Up to a finite multiset, the zeros of the first product are given (with multiplicities) by $-\alpha_i-2\Z^{\ge0}$, $i = 1, \ldots, m$,
while those of $\tilde{R}_p(p^{-s})$ are given by $s = (-\log x_{p,i}+2\pi\iii\Z) / \log p$, where $x_{p,1}, \ldots, x_{p,d_p}$ are the zeros of
$\tilde{R}_p$. Considering only the zeros with $\Im s$ sufficiently large, we see that the zeros of
each rational function $\tilde{R}_p$ are determined by $\phi$. Arguing similarly with the poles, we conclude that each $\tilde{R}_p$ is determined by $\phi$.

Furthermore, if we have an equality
\begin{equation} \label{eq: prodgammaidn}
\prod_{i=1}^m\frac{\Gamma_{\R}(1-s+\alpha_i^\vee)}{\Gamma_{\R}(s+\alpha_i)}=
CA^s\prod_{i=1}^{m'}\frac{\Gamma_{\R}(1-s+\beta_i^\vee)}{\Gamma_{\R}(s+\beta_i)}
\end{equation}
for some constants $A>0$ and $C$, then an examination of the zeros and poles shows that $m'=m$, and that after possibly reindexing $\alpha_1,\dots,\alpha_m$
and $\alpha_1^\vee,\dots,\alpha_m^\vee$ we may assume that
$\alpha_i-\beta_i,\alpha_i^\vee-\beta_i^\vee\in 2\Z$ for all $i$, in which case $\Gamma_{\R}(s+\alpha_i)/\Gamma_{\R}(s+\beta_i)$ and
$\Gamma_{\R}(1-s+\alpha_i^\vee)/\Gamma_{\R}(1-s+\beta_i^\vee)$ are rational functions in $s$. Thus $A=1$.

The first two parts follow. The third and fourth parts immediately follow from the first part.

Moreover, by an easy argument (e.g., \cite{SerreAppendix}) the equality \eqref{eq: prodgammaidn} holds if and only if
$A=1$, $m'=m$ and after reindexing $\alpha_1,\dots,\alpha_m$ and $\alpha_1^\vee,\dots,\alpha_m^\vee$ if necessary, there exists $0 \le k \le m$ such that
\begin{enumerate}
\item $\alpha_i+\beta_i^\vee=\alpha_i^\vee+\beta_i=1$ and $\alpha_i-\beta_i\in 2\Z$ for all $i=1,\dots,k$,
\item $\alpha_i=\beta_i$ and $\alpha_i^\vee=\beta_i^\vee$ for $i>k$, and,
\item $C=(-1)^{(\alpha_1-\beta_1+\dots+\alpha_k-\beta_k)/2}$.
\end{enumerate}
Part \ref{part: condinfty} follows.

Suppose that $\alpha_1,\dots,\alpha_m$ are not reduced, so that there exist indices $i$ and $j$ such that $\alpha_i+\alpha_j^\vee=1-2k$ for some positive integer $k$.
We may then replace $\alpha_i$ and $\alpha_j^\vee$ by $1-\alpha_j^\vee$ and $1-\alpha_i$, respectively, and multiply $C_\infty$ by $(-1)^k$.
We may repeat this process until $\alpha_1,\dots,\alpha_m$ become reduced.
The process must terminate after finitely many steps since $\sum_i(\alpha_i+\alpha_i^\vee)$ increases by $4k$ in each step.

Once $\alpha_1,\dots,\alpha_m$ are reduced, the poles of $\prod_{i=1}^m\Gamma_{\R}(1-s+\alpha_i^\vee)$ are disjoint from those of
$\prod_{i=1}^m\Gamma_{\R}(s+\alpha_i)$. Therefore, the zeros of $\prod_{i=1}^m\ {\Gamma_{\R}(s+\alpha_i)}^{-1}$
are precisely those of $\gamma_\infty(s,\pi,r)$ (including multiplicities).
In particular, $\{\alpha_1,\dots,\alpha_m\}$ is determined as a multiset, and hence
\eqref{eq: complconjalpha} follows from \eqref{eq: localgammaFE}.
\end{proof}

Assume that \eqref{eq: FE} is satisfied for some fixed $r$ and $\pi$
and take $\alpha_1,\dots,\alpha_m$ to be reduced (hence uniquely determined).
We set
\[
L_\infty^{\red}(s,\pi,r)=\prod_{i=1}^m\Gamma_{\R}(s+\alpha_i),
\]
so that
\[
\gamma_\infty(s,\pi,r)= c_\infty\frac{\overline{L_\infty(1-\bar{s},\pi,r)}}{L_\infty(s,\pi,r)}
\]
with $c_\infty = \pm C_\infty$.
In this case the archimedean conductor simplifies to
\begin{equation} \label{def: archcond}
\infcond_\infty(\pi,r)=\prod_{i=1}^m(1+\abs{\alpha_i-1/2}).
\end{equation}
For $p\in S_{\Q, f} (\pi)$ it follows from \eqref{eq: localgammaFE} that we can write in a unique fashion
\begin{equation} \label{eq: gammap}
\gamma_p(s,\pi,r)=c_p p^{(\frac12-s)\expcond_p(\pi,r)}P_p(p^{-s})/\bar{P}_p(p^{s-1}),
\end{equation}
where $c_p\in\C^*$, $\expcond_p(\pi,r)\in\Z$, and $P_p$ is a polynomial with $P_p(0)=1$ such that
no zeros $\alpha$ and $\beta$ of $P_p$ satisfy $\alpha \bar{\beta}=p^{-1}$. Here $\bar P$ is the polynomial obtained from $P$
by taking complex conjugates of the coefficients.
The degree of $P_p$ is the number of zeros (or, equivalently, poles) of $R_p$ in $\C^\times$ (counted with multiplicities), and
the integer $\expcond_p(\pi,r)$ is the difference between the order of $R_p$ at $X=0$ and the degree of $P_p$.
We have seen that $\expcond_p(\pi,r)$ and $P_p$ are uniquely determined by $\pi$ and $r$. Analogously to the case $p = \infty$,
the zeros of $P_p (p^{-s})$ are precisely the zeros of $\gamma_p(s,\pi,r)$.
Although we expect that $\expcond_p(\pi,r)\ge0$, we do not impose this condition at the outset.
We set
\[
L_p^{\red}(s,\pi,r)=P_p(p^{-s})^{-1}, \quad p\in S_{\Q, f} (\pi),
\]
and define the \emph{reduced completed $L$-function} \label{sec: redLfunction}
\[
L^{\red}(s,\pi,r)=\big( \prod_{p\in S_{\Q}(\pi)}L_p^{\red}(s,\pi,r) \big) L^{S(\pi)}(s,\pi,r),
\]
and the reduced epsilon factor
\[
\epsilon^{\red}(s,\pi,r)=c_\infty\prod_{p\in S_{\Q,f}(\pi)}c_p p^{(\frac12-s)\expcond_p(\pi,r)}= \arithcond(\pi,r)^{\frac12-s}
\prod_{p\in S_\Q (\pi)}c_p,
\]
where
\begin{equation} \label{def: arithcond}
\arithcond(\pi,r)=\prod_{p\in S_{\Q, f} (\pi)}p^{\expcond_p(\pi,r)}\in\Q_{>0}
\end{equation}
is the arithmetic, or finite, conductor.
Thus, we can rewrite \eqref{eq: FE} as
\[
L^{\red}(s,\pi,r)=\epsilon^{\red}(s,\pi,r)L^{\red}(1-s,\pi,r^\vee)
= \epsilon^{\red}(s,\pi,r) \overline{L^{\red}(1-\bar{s},\pi,r)}.
\]
We denote by $L^{\red, f}(s,\pi,r)= (\prod_{p\in S_{\Q,f}(\pi)}L_p^{\red}(s,\pi,r))L^{S(\pi)}(s,\pi,r)$ the
``finite'' part of $L^{\red}$.

\begin{remark} \label{rem: altproc}
In many cases there is an alternative procedure to define a completed $L$-function
\begin{equation} \label{OtherL}
L (s) = \big( \prod_{p\in S_{\Q}(\pi)} \tilde{L}_p(s) \big) L^{S(\pi)}(s,\pi,r),
\end{equation}
which may differ from $L^{\red}(s)$.
Here $\tilde{L}_\infty(s)=\prod_{i=1}^{\tilde{m}} \Gamma_{\R}(s+\tilde{\alpha}_i)$ with some complex numbers $\tilde{\alpha}_i$, and
$\tilde{L}_p(s)=\tilde{P}_p(p^{-s})^{-1}$ for $p\in S_{\Q, f} (\pi)$, where
$\tilde{P}_p$ are some polynomials satisfying $\tilde{P}_p (0) = 1$.
(For brevity, we say that factors $\tilde{L}_p(s)$, $p\in S_{\Q} (\pi)$, of this shape are Euler factors.)
The advantage in working with $L^{\red}(s)$ is that it is uniquely
determined by the partial $L$-function $L^{S(\pi)}$ (and hence by $\pi$ and $r$). Of course, it is only defined
if property \FE\ is known a priori.

In any case, as a consequence of the minimality of the local factors $L_p^{\red}(s)$, $p\in S_{\Q} (\pi)$,
the reduced $L$-function $L^{\red}(s)$ satisfies the following minimality property.
Suppose that a function $L(s)$ as in \eqref{OtherL} satisfies a functional equation of the form
\[
L (s) = c R^{\frac12 - s} L^\vee (1-s)
\]
for some $c\in\C^*$ and $R>0$, where
\[
L^\vee (s) = \big( \prod_{p\in S_{\Q}(\pi)} \tilde{L}^\vee_p(s) \big) L^{S(\pi)}(s,\pi,r^\vee),
\]
for some Euler factors $\tilde{L}^\vee_p(s)$, $p\in S_{\Q}(\pi)$.
Then for each $p\in S_{\Q,f}(\pi)$, the polynomial $\tilde{P}_p$ is divisible by $P_p$ and
$\tilde{L}_\infty(s)/L_\infty^{\red}(s,\pi,r)$ is a polynomial.
In particular, the quotient $L^{\red}(s) / L(s)$ is an entire function.
Moreover,
\[
\frac{\arithcond(\pi,r)}{R}
= \prod_{p\in S_{\Q,f}(\pi)}p^{\deg\tilde P_p-\deg P_p}
\]
is a positive integer.

Consider for example the case of $G = \GL (n)$ with the standard representation $r = \Std_n$.
If $\pi\in\Pi_{\cusp}(G(\A))$ then the completed $L$-function $L^{\GJ}(s,\pi,\Std_n)$ was defined and studied by Godement--Jacquet \cite{MR0342495}.
The Jacquet-Shalika bounds on the local parameters of generic representations imply that $L^{\GJ}(s,\pi,\Std_n)$ is reduced.
On the other hand, for any $m>1$ the $L$-function
\[
\prod_{j=1}^mL^{\GJ}(s+\frac{m+1}2-j,\pi,\Std_n),
\]
which is the completed $L$-function of a residual representation of $\GL(nm,\A)$, is not reduced.
\end{remark}

\subsection{}
Our next task is to apply the standard argument of Riemann and von Mangoldt for estimating the number of zeros of an $L$-function
(cf., e.g., \cite[\S 5.3]{MR2061214}).
For our purposes we will need a weak form of this argument which works under the following supplementary conditions.

\begin{definition} \label{DefinitionFEplus}
We say that $(G,r)$ satisfies property (FE+), if it satisfies property \FE\ and if
in addition the following holds.
\begin{enumerate}
\item There exists a polynomial $P(s)$, whose degree is bounded in terms of $(G,r)$ only, such that
$P(s)L^{S(\pi)}(s,\pi,r)$ is an entire function of finite order.
\item The degrees of the polynomials $P_p$, $p \in S_{\Q,f}$, in \eqref{eq: gammap}, as well as the degree of
the factor $\gamma_\infty$ are bounded in terms of $(G,r)$ only.
\item There exists a real number $\beta$ depending only on $(G,r)$, such that
the zeros of $P_p$, $p \in S_{\Q,f}$, have absolute value $\ge p^{-\beta}$ and
such that $\gamma_\infty (s,\pi,r)$ has no zeros in the half-plane $\Re s > \beta$
(i.e., the reduced parameters $\alpha_i$ satisfy $\Re \alpha_i \ge - \beta$).
\end{enumerate}

We say that $(G,r)$ has virtually property (FE+), if there exist representations $r_1$ and $r_2$ for which $(G,r_i)$ has property (FE+) and $r\oplus r_1 = r_2$.
\end{definition}

In practice, we will have $\deg\gamma_\infty=N[F:\Q]\ge \deg P_p$ for all $p\in S_{\Q,f}(\pi)$.
However, we do not demand it at the outset.
The most elusive condition in property (FE+) is in fact the first one --- the boundedness of the number of poles of $L^S(s,\pi,r)$ independently of $\pi$.

\begin{remark}
It is clear that $(G,r)$ satisfies (FE+) if and only if the same is true for $(G,r^\vee)$.
An analogous remark is applicable for all subsequent properties defined below.
\end{remark}

We can now state the desired estimate.
Recall the archimedean and arithmetic conductors defined in \eqref{def: archcond} and \eqref{def: arithcond}, respectively.
Here and throughout we write
\[
A\ll B
\]
to mean that $A$ is bounded by a constant multiple of $B$. The implied constant is allowed to depend only on the pair $(G,r)$.

\begin{proposition} \label{lem: logderbnd}
Suppose that $(G,r)$ satisfies property (FE+).
Then for any $\pi\in\Pi_{\disc}(G(\A))$ the meromorphic function
\[
m(s,\pi,r)=\epsilon^{\red}(1,\pi,r)L^{\red}(s,\pi,r)/L^{\red}(s+1,\pi,r)
\]
satisfies $\abs{m(\iii t,\pi,r)}=1$ for $t\in\R$ and
\[
\int_T^{T+1}\abs{m'(\iii t,\pi,r)}\ dt \ll
\log (\abs{T}+2) + \log \infcond_\infty(\pi,r) + \log ( \arithcond(\pi,r) \prod_{p\in S_{\Q,f} (\pi)} p),\ \ \ T\in\R.
\]
\end{proposition}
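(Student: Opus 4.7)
The approach is to establish unitarity $|m(\iii t,\pi,r)|=1$ first, then to decompose $m'/m$ into three pieces—an archimedean Gamma contribution, bad-place Euler contributions, and a ``global'' piece captured by an entire $L$-function of finite order—and bound each separately. Since $|m(\iii t)|=1$ almost everywhere, $|m'(\iii t)|=|(m'/m)(\iii t)|$ on the complement of the discrete set of zeros and poles, so it suffices to bound the integral of the logarithmic derivative.

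Unitarity follows directly from the functional equation. Iterating (FE) forces $\epsilon^{\red}(s,\pi,r)\overline{\epsilon^{\red}(1-\bar s,\pi,r)}=1$; combined with the shape $\epsilon^{\red}(s,\pi,r)=\arithcond(\pi,r)^{1/2-s}\cdot c_\infty\prod_p c_p$, this forces $|c_\infty\prod_p c_p|=\arithcond(\pi,r)^{-1/2}$. Substituting the FE into the definition yields $m(\iii t)=\epsilon^{\red}(1,\pi,r)\epsilon^{\red}(\iii t,\pi,r)\overline{L^{\red}(1+\iii t,\pi,r)}/L^{\red}(1+\iii t,\pi,r)$, whose absolute value equals $\arithcond^{-1/2}\cdot\arithcond^{1/2}\cdot 1=1$.

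For the bound, I decompose $m'/m$ according to $L^{\red}=L_\infty^{\red}\cdot\prod_{p\in S_{\Q,f}(\pi)}L_p^{\red}\cdot L^{S(\pi)}$. For the Gamma part, each summand is $\tfrac12(\psi_{\R}((s+\alpha_i)/2)-\psi_{\R}((s+1+\alpha_i)/2))$, which by Stirling is $O(\log(|t|+|\alpha_i-1/2|+2))$ for $s=\iii t$; using the boundedness of the number of $\alpha_i$'s (item (2) of (FE+)) and the definition of $\infcond_\infty$, integration over $[T,T+1]$ gives a contribution $\ll\log(|T|+2)+\log\infcond_\infty(\pi,r)$. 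For each $p\in S_{\Q,f}(\pi)$, the ratio $L_p^{\red}(s)/L_p^{\red}(s+1)$ is a rational function of $p^{-s}$ of bounded degree, whose zeros and poles are confined to $\Re s\le\beta$ by item (3) of (FE+); its logarithmic derivative is periodic in $\Im s$ with period $2\pi/\log p$ and integral $O(1)$ per period, so its $L^1$-norm over $[T,T+1]$ is $O(\log p)$, and summing gives $O(\log\prod_{p\in S_{\Q,f}(\pi)}p)$.

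The nontrivial part is the global piece. By item (1) of (FE+), choose a polynomial $\tilde P$ of degree bounded by $(G,r)$ whose zeros cancel the (boundedly many) poles of $L^{\red}(s,\pi,r)$, so that $\Lambda(s):=\tilde P(s)L^{\red}(s,\pi,r)$ is entire of finite order; standard vertical-line bounds (absolute convergence on $\Re s>\alpha+1$, FE on $\Re s<-\alpha$, Phragm\'en--Lindel\"of in between) show $\Lambda$ has order $\le 1$. Hadamard's theorem gives $\Lambda(s)=e^{a+bs}\prod_\rho(1-s/\rho)e^{s/\rho}$, so that
\[
\frac{\Lambda'}{\Lambda}(s)-\frac{\Lambda'}{\Lambda}(s+1)=\sum_\rho\frac{1}{(s-\rho)(s+1-\rho)}.
\]
Integrating over $[T,T+1]$, zeros $\rho=\beta+\iii\gamma$ with $|\gamma-T|>1$ contribute $\ll\sum_\rho(1+|\gamma-T|)^{-2}$, bounded using $\sum|\rho|^{-1-\varepsilon}<\infty$, while ``near'' zeros with $|\gamma-T|\le 1$ contribute at most the zero count $N(T)$ in that strip. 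Applying Jensen's formula to $\Lambda$ on a disk of radius $2$ centered at $2+\iii T$, together with the vertical-line bounds, gives $N(T)\ll\log(|T|+2)+\log\infcond_\infty(\pi,r)+\log\arithcond(\pi,r)$. The correction $\tilde P'/\tilde P(s)-\tilde P'/\tilde P(s+1)$ contributes only $O(1)$ since $\tilde P$ has boundedly many zeros. Adding the three contributions yields the stated bound.

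\textbf{Main obstacle.} The crux is the uniform Riemann--von Mangoldt zero count. Each of the three inputs required—polynomial-in-$|T|\arithcond\infcond_\infty$ upper bounds on both edges of the critical strip, the Phragm\'en--Lindel\"of interpolation, and the Jensen estimate—must hold with constants depending only on $(G,r)$. This uniformity is exactly what the three clauses of (FE+) buy over bare (FE): bounded degrees of $P_p$ and of $\gamma_\infty$, and the lower bound $\Re\alpha_i\ge-\beta$ together with the lower bound on the zeros of $P_p$, guarantee the needed uniform vertical-line estimates.
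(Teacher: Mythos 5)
Your proposal is essentially correct and would yield the proposition, but it is organized quite differently from the paper's proof, and a few steps need care.

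\textbf{Comparison of approaches.} The paper works with the full completed, conductor-normalized function $\Lambda(s)=\arithcond(\pi,r)^{s/2}L^{\red}(s,\pi,r)$, viewed as a \emph{meromorphic} function of order one with infinitely many poles (coming from the bad Euler factors $L_p^{\red}=P_p(p^{-s})^{-1}$). It then exploits the self-dual functional equation of $\Lambda$ to kill the Hadamard constant and obtain $\Re\frac{\Lambda'}{\Lambda}(s)=\sum_\rho n(\rho)\Re\frac1{s-\rho}$, and it bounds the full sum $\sum_\rho |n(\rho)|(1+(T-\Im\rho)^2)^{-1}$ at once, controlling the negative contributions (poles of $\Lambda$) directly through the bounded Euler degrees and the identity $\sum_{n\in\Z}(1+(n/x)^2)^{-1}=\pi x\coth(\pi x)$. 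You instead strip off the archimedean and bad-place Euler contributions first, estimate them individually, and only apply a Hadamard/Jensen argument to a polynomial-corrected \emph{partial} $L$-function. Your trick of differencing $\frac{\Lambda'}{\Lambda}(s)-\frac{\Lambda'}{\Lambda}(s+1)$ to eliminate the Hadamard constant is needed precisely because the object you renormalize lacks the self-dual symmetry that the paper uses for the same purpose. Note also that your periodic-Euler-factor estimate ($L^1$-norm $\ll\log p$ over a unit interval) is exactly the content of the paper's Lemma \ref{lem: estimfctrs}, which the paper proves separately and deploys only later, in the proof of Proposition \ref{prop: ABCimpliesTWN}, not here.

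\textbf{Points to tighten.} (i) Where you write ``choose a polynomial $\tilde P$ \ldots\ so that $\Lambda(s):=\tilde P(s)L^{\red}(s,\pi,r)$ is entire'', you must mean $L^{S(\pi)}$ rather than $L^{\red}$: the latter has infinitely many poles (one lattice per bad prime), and a bounded-degree polynomial cannot cancel them. Given your preceding decomposition, $L^{S(\pi)}$ is clearly the intended object. (ii) The claim that the far zeros are ``bounded using $\sum|\rho|^{-1-\varepsilon}<\infty$'' gives finiteness but not uniformity; you in fact need the Jensen zero-count at all heights $T+k$, summed against $k^{-2}$, to get a bound of the required form uniformly in $\pi$ and $T$. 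The paper avoids this by bounding the weighted sum $\sum_\rho|n(\rho)|(1+(T-\Im\rho)^2)^{-1}$ in one stroke. (iii) In the Jensen step you need a lower bound on $|\tilde P(s_0)L^{S(\pi)}(s_0)|$ at a center $s_0$ far to the right; the factor $|L^{S(\pi)}(s_0)|$ is bounded below by absolute convergence, but for the polynomial factor you should normalize $\tilde P$ to be monic (its zeros lie in a bounded vertical strip, so then $|\tilde P(s_0)|\gg 1$) — otherwise the leading coefficient is uncontrolled. None of these is a fatal flaw, but each requires attention to reach the uniform statement; the paper's more holistic treatment sidesteps all three.
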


\begin{proof}
Although the argument is familiar, we will provide the details, since our assumptions are somewhat weaker than usual.
Let $\Lambda(s,\pi,r)=\arithcond(\pi,r)^{s/2}L^{\red}(s,\pi,r)$. Then
\begin{equation} \label{eq: FELambda}
\Lambda(s,\pi,r)=\epsilon^{\red}(\frac12,\pi,r)\overline{\Lambda(1-\overline{s},\pi,r)}.
\end{equation}
Note that $m(s,\pi,r)=\Lambda(s,\pi,r)/\overline{\Lambda(-\overline{s},\pi,r)}$ and hence $m(s,\pi,r)$ is holomorphic on the imaginary axis
and has absolute value one there. Moreover,
\[
\frac{m'(\iii t,\pi,r)}{m(\iii t,\pi,r)}=2\Re\frac{\Lambda'(\iii t,\pi,r)}{\Lambda(\iii t,\pi,r)}, \quad t\in\R.
\]

The function $\Lambda(s)=\Lambda(s,\pi,r)$ is a quotient of holomorphic functions of order one, and hence a meromorphic function of order one.
For instance, the argument of \cite[Proposition 1]{MR2230919} shows that there exist positive constants $c_1$, $c_2$, $c_3$, depending also on $\pi$, such that
\[
\abs{P(s)L^{S(\pi)}(s,\pi,r)}\le c_1(1+\abs{s})^{\max(c_2,-c_3\Re(s))},\ \ \ s\in\C.
\]
Therefore $\Lambda(s)$ admits a Hadamard factorization
\[
\Lambda(s)=e^{a+bs}s^{n(0)}\prod_{\rho\ne0}[(1-s/\rho)e^{s/\rho}]^{n(\rho)},
\]
where $a,b\in\C$, the product ranges over the zeros and poles of $\Lambda(s)$ other than $0$, and
$n(\rho)$ is the order of the function $\Lambda(s)$ at $s=\rho$ (possibly negative). Also,
\[
\sum_{\rho\ne0}\frac{\abs{n(\rho)}}{\abs{\rho}^{1+\epsilon}}<\infty
\]
for all $\epsilon>0$.
Thus,
\[
\frac{\Lambda'(s)}{\Lambda(s)}=b+\frac{n(0)}s+\sum_{\rho\ne0}n(\rho)\left(\frac1{s-\rho}+\frac1\rho\right),
\]
and hence
\[
\Re\frac{\Lambda'(s)}{\Lambda(s)}=\sum_\rho\Re\frac{n(\rho)}{s-\rho}+\Re b+\sum_{\rho\ne0}\Re\frac{n(\rho)}\rho,
\]
where the series of $\rho$ are absolutely convergent because of
\[
\sum_{\rho\ne0}\abs{\Re\frac{n(\rho)}\rho}=\sum_{\rho\ne0}\frac{n(\rho)\abs{\Re\rho}}{\abs{\rho}^2}\ll\sum_{\rho\ne0}\frac{n(\rho)}{\abs{\rho}^2}<\infty.
\]
Taking the logarithmic derivative of the functional equation \eqref{eq: FELambda}, we also get
\[
-\Re\frac{\Lambda'(s)}{\Lambda(s)}=\sum_\rho\Re\frac{n(\rho)}{1-s-\overline{\rho}}+\Re b+\sum_{\rho\ne0}\Re\frac{n(\rho)}\rho.
\]
Since $n(\rho)=n(1-\overline{\rho})$, we conclude that $\Re b+\sum_{\rho\ne0}\Re\frac{n(\rho)}\rho=0$, and therefore
\[
\Re\frac{\Lambda'(s)}{\Lambda(s)}=\sum_\rho n(\rho)\Re\frac1{s-\rho}=\sum_\rho n(\rho)\frac{\Re (s-\rho)}{\abs{s-\rho}^2}.
\]

By the second and third condition of Definition \ref{DefinitionFEplus},
there exist an integer $m \ge 1$ and
a constant $A \ge 2$, depending only on $G$ and $r$, such that for all primes $p$ the Euler factor at $p$ of $L^{\red}(s,\pi,r)$ is a product of at most $m$ factors $(1- \alpha_{p,i} p^{-s})^{-1}$ with
$\abs{\alpha_{p,i}} \le p^{A-2}$ for all $p$ and $i$, and such that the poles of the factor at infinity $L_\infty$ lie in the half-plane $\Re s \le A-2$.
(In fact, we may take A = $\max (\alpha,\beta) + 2$.)
Therefore, the Euler product
$L^{\red, f}(s,\pi,r)$
is absolutely convergent for $\Re s > A-1$, and because of the condition on $L_\infty$ and the functional equation \eqref{eq: FELambda}, all zeros and poles $\rho$ of
$\Lambda(s)$ lie in the strip $2 - A \le \Re s \le A-1$.

By definition, we have
\[
\frac{\Lambda'(s)}{\Lambda(s)}=\frac12\log\arithcond(\pi,r)+\frac{(L^{\red, f})'(s,\pi,r)}{L^{\red, f}(s,\pi,r)}
+\frac{L'_\infty (s,\pi,r)}{L_\infty (s,\pi,r)}.
\]
The absolute convergence of the Euler product for $\Re s > A-1$ implies that
\[
\abs{\frac{(L^{\red, f})'(s,\pi,r)}{L^{\red, f}(s,\pi,r)}}
\le - m \frac{\zeta'(s - A + 2)}{\zeta (s - A + 2)}
\]
in this half-plane.
Using the fact that $\mathbf{\Gamma}'(s)/\mathbf{\Gamma}(s)=\log s+O(1/\abs{s})$ for $\Re s\ge1$, we get that
\[
\abs{\Re\frac{\Lambda'(s)}{\Lambda(s)}}\le\abs{\frac{\Lambda'(s)}{\Lambda(s)}}\le\frac12\log\arithcond(\pi,r)+ \frac12 \log\infcond_\infty(\pi,r)+
\frac{m}2 \log\abs{s} + c
\]
for $\Re s \ge A$, where $c$ depends only on $G$ and $r$.
Taking $\Re s = A$, we conclude that
\[
\sum_\rho n(\rho)\frac{A-\Re\rho}{\abs{A+\iii T-\rho}^2}\le\frac12\log\arithcond(\pi,r) + \frac12 \log\infcond_\infty(\pi,r)+ \frac{m}2 \log(1+\abs{T})+c.
\]
On the other hand,
because of the first condition of Definition \ref{DefinitionFEplus}, up to finitely many exceptions
the poles of $\Lambda (s)$ can only arise from the poles of the Euler factors $L_p(s,\pi,r) = P_p (p^{-s})$, $p \in S_{\Q, f} (\pi)$.
Therefore, using the identity
$\sum_{n\in\Z}(1+(n/x)^2)^{-1}=\pi x\coth(\pi x)$,
we have
\[
\sum_{\rho: n (\rho) < 0} \abs{n(\rho)}\frac{A-\Re\rho}{\abs{A+\iii T-\rho}^2}\le c+\frac m2\sum_{p\in S_{\Q, f} (\pi)}\frac{1+p^{-1}}{1-p^{-1}}\log p,
\]
where $c$ again depends only on $r$.
Hence
\begin{equation} \label{eq: bnd on zeros}
\sum_\rho\frac{\abs{n(\rho)}}{1+(T-\Im\rho)^2}\ll\\ \log\arithcond(\pi,r)+\sum_{p \in S_{\Q,f}(\pi)}\log p+
\log \infcond_\infty(\pi,r) +\log(1+\abs{T})+1,
\end{equation}
and in particular
\begin{equation} \label{eq: bnd on zeros2}
\sum_{\rho:\abs{\Im\rho-T}<2}\abs{n(\rho)}\ll
\log\arithcond(\pi,r) + \sum_{p\in S_{\Q,f} (\pi)}\log p+
\log \infcond_\infty(\pi,r) + \log(1+\abs{T})+1.
\end{equation}

We can now estimate $\int_T^{T+1}\abs{m'(\iii t,\pi,r)}\ dt=2\int_T^{T+1}\abs{\Re\frac{\Lambda'(\iii t)}{\Lambda(\iii t)}}\ dt$
by writing
\[
\Re\frac{\Lambda'(\iii t)}{\Lambda(\iii t)}=\sum_\rho n(\rho)\frac{\Re\rho}{\abs{\iii t-\rho}^2},
\]
and splitting the sum into two parts according to whether $\abs{\Im\rho-T}\ge2$ or $\abs{\Im\rho-T}<2$. For the first sum we have
\[
\abs{\sum_{\rho:\abs{\Im\rho-T}\ge2}n(\rho)\frac{\Re\rho}{\abs{\iii t-\rho}^2}}
\le 2A\sum_{\rho:\abs{\Im\rho-T}\ge2}\frac{\abs{n(\rho)}}{1+(T-\Im\rho)^2}
\]
for any $t\in [T,T+1]$ and we use \eqref{eq: bnd on zeros}. For the second sum we use the fact that for $\rho=\beta+\iii\gamma$
with $\beta\ne0$ we have
\[
\int_T^{T+1}\frac{\abs{\Re\rho}}{\abs{\iii t-\rho}^2} dt=\int_T^{T+1}\frac{\abs{\beta}}{\beta^2+(t-\gamma)^2}\ dt=
\int_{(T-\gamma)/\abs{\beta}}^{(T+1-\gamma)/\abs{\beta}}\frac1{1+t^2}\ dt<\pi.
\]
Thus,
\[
\int_T^{T+1}\abs{\sum_{\rho:\abs{\Im\rho-T}<2}n(\rho)\frac{\Re\rho}{\abs{\iii t-\rho}^2}}\ dt\le
\pi\sum_{\rho:\abs{\Im\rho-T}<2}\abs{n(\rho)},
\]
which is bounded by \eqref{eq: bnd on zeros2}.
All in all we get the required estimate.
\end{proof}


\begin{remark} \label{rem: weaknice is enough}
It is clear that in Proposition \ref{lem: logderbnd} it is in fact sufficient to require that
$(G,r)$ has virtually property (FE+).
Similarly, it suffices to know that the number of poles of $L^{S(\pi)}(s,\pi,r)$ is
$\ll \log \infcond_\infty(\pi,r) + \log ( \arithcond(\pi,r) \prod_{p\in S_{\Q,f} (\pi)} p)$.
\end{remark}

In order to make the connection to asymptotic problems, we need to control $\arithcond(\pi,r)$ (resp., $\infcond_\infty(\pi,r)$)
in terms of the level of $\pi$ (resp., the size of the infinitesimal character of $\pi_\infty$).
Fix once and for all a faithful $F$-rational representation $\rho: G \to \GL (V)$ and an $\mathfrak{o}_F$-lattice $\Lambda$ in the representation space $V$.
The stabilizer of $\hat{\Lambda} = \hat{\mathfrak{o}}_F \otimes \Lambda \subset \A_{\fin} \otimes V$ in $G(\A_{\fin})$ is an open compact subgroup $\K_{\fin}$,
and any maximal compact subgroup of $G(\A_{\fin})$ can be realized this way.
For any non-zero ideal $\nnn$ of $\mathfrak{o}_F$ let
\[
\K (\nnn) = \K_G(\nnn)=\{ g \in G (\A_{\fin}) \, : \, \rho (g) v \equiv v \pmod{\nnn \hat{\Lambda}}, \quad v \in \hat{\Lambda} \}
\]
be the principal congruence subgroup of level $\nnn$, a factorizable normal open subgroup of $\K_{\fin}$.
The groups $\K (\nnn)$ form a neighborhood base of the identity element in $G(\A_{\fin})$.
We denote by $\inorm (\nnn) = [ \mathfrak{o}_F : \nnn]$ the ideal norm of $\nnn$.
We define the level of an admissible representation $\pi$ of $G(\A)$ by
$\level (\pi)=\inorm(\nnn)$, where $\nnn$ is the largest ideal such that
$\pi^{\K(\nnn)}\ne0$. Analogously, for any finite place $v$ of $F$ we define the level $\level_v (\pi_v)$ of a smooth representation $\pi_v$ of $G(F_v)$.
Thus, $\level(\pi)=\prod_v\level_v(\pi_v)$ where $v$ ranges over the finite places of $F$ and almost all of the factors are $1$.
Note that there exists an integer $n$, depending only on $G$, such that for any $\pi\in\Pi_{\disc}(G(\A))$ we have:
\begin{equation} \label{eq: levelSpi}
\text{$p$ divides $\level(\pi)$ for any rational prime $p \in S_{\Q,f}(\pi)$ coprime to $n$.}
\end{equation}

We fix a maximal compact subgroup $\K_\infty$ of $G(F_\infty)$ and set $\K = \K_\infty \K_{\fin}$.

Fix a Cartan subalgebra $\Crtn_{\C}$ of the complexified Lie algebra $(\Lieg_\infty)_{\C}$ of $G(F_\infty)$
and a Euclidean norm $\norm{\cdot}_{\C}$ on $\Crtn_{\C}^*$ which is invariant under the Weyl group $W((\Lieg_\infty)_{\C},\Crtn_{\C})$.
For any (smooth) non-zero representation $\pi$ of $G(F_\infty)$, which has an infinitesimal character $\chi_\pi$, we define
\[
\param(\pi)=1+\norm{\chi_\pi}^2,
\]
where we view $\chi_\pi$ as an orbit in $\Crtn_{\C}^*$ under $W((\Lieg_\infty)_{\C},\Crtn_{\C})$.
\begin{remark} \label{rem: aboutparam}
In \cite[\S 5.1]{MR3352530} the slightly different invariant
\[
\param'(\pi)=1+\lambda_\pi^2+\min\norm{\tau}^2
\]
is used (and denoted there by $\param(\pi)$),
where $\lambda_\pi$ is the eigenvalue of the Casimir operator of $G (F_\infty)$,
$\tau$ ranges over the $\K_\infty$-types of $\pi$ and the norm $\norm{\tau}$ is the one defined by Vogan
(cf. \cite[\S 2.2]{MR759263}).
By a standard argument (cf. \cite[\S 6.5--6.6]{MR632407}), there exist constants $c_1,c_2>0$, depending only on $G$, such that
\[
c_1\param'(\pi)<\param(\pi)^2<c_2\param'(\pi).
\]
Therefore, for all practical purposes there is no difference between $\param(\pi)$ and $\param'(\pi)$.
\end{remark}

We thus make the following definition.

\begin{definition}
Suppose that the pair $(G,r)$ satisfies property (FE+).
We say that the conductor condition \CC\ is satisfied, if
there exists $c > 0$ (depending only on $G$ and $r$), such that
\begin{equation} \label{eq: arithcndbnd}
\arithcond(\pi,r)\ll\level (\pi)^c
\end{equation}
and
\begin{equation} \label{eq: archimedean}
\infcond_\infty(\pi,r) \ll \param (\pi_\infty)^c
\end{equation}
for all $\pi\in\Pi_{\disc}(G(\A))$.
\end{definition}

Condition \CC\ is certainly expected to hold in general (cf.~\cite{1311.1606}), although it is not clear whether, strictly speaking, its non-archimedean part \eqref{eq: arithcndbnd}
formally follows from Langlands's principle of functoriality or the local Langlands conjecture for $G$.
For the archimedean part \eqref{eq: archimedean} see Remark \ref{rem: archcc} below.


\subsection{} \label{sec: psivconv}
The standard paradigm for proving \FE\ (e.g., through the Langlands-Shahidi method or an integral representation, cf.~\cite{MR951897})
goes by defining local factors $\gamma_v^{\gnr}(s,\pi,r)$, that satisfy certain properties, most importantly
\begin{equation} \tag{FE'} \label{eq: global FE}
L^S(s,\pi,r)=\big(\prod_{v\in S}\gamma_v^{\gnr}(s,\pi,r)\big)L^S(1-s,\pi,r^\vee)
\end{equation}
for any finite set $S\supset S(\pi)$.
(In each particular case the superscript $^\gnr$ will be replaced by an appropriate acronym.)
The local factors $\gamma_v^{\gnr}(s,\pi,r)$ normally depend on a choice of a character $\psi_v$ of $F_v$.
We will suppress this choice by taking $\psi_F=\psi_\Q\circ\Tr_{F/\Q}$
where $\psi_\Q=\prod_p \psi_{\Q_p}$ is the standard character of $\Q\bs\A_\Q$ (characterized by $\psi_{\R}=e^{2\pi\iii\cdot}$)
and writing $\psi_F=\otimes\psi_v$, i.e., $\psi_v=\psi_{F_v}=\psi_{\Q_p}\circ\Tr_{F_v/\Q_p}$, if $v$ lies above $p$.
If we want to emphasize that $\gamma_v^{\gnr}(s,\pi,r)$ depends only on $\pi_v$ (as it is in the usual paradigm) we will write it
as $\gamma_v^{\gnr}(s,\pi_v,r)$. However, we will not make it a part of our requirements.
This flexibility will be useful in sections \ref{sec: goodLexamples} and \ref{SectionClassical}, where we will use functoriality to study analytic properties of $L$-functions.

In addition to \eqref{eq: global FE} we will impose the following conditions
on $\gamma_v^{\gnr}(s,\pi,r)$ (which for simplicity we denote by \GF).
Here, $n$ is a positive integer and $\beta$ a real number which depend only on $(G,r)$. (In practice $n=N=\deg r$.)
\begin{enumerate}
\item For every finite $v$, the function $\gamma_v^{\gnr}(s,\pi,r)$ is a rational function in $q_v^{-s}$, and
\begin{equation} \label{eq: gammadot}
\gamma_v^{\gnr}(s,\pi,r)= \epsilon_v^{\gnr}(s,\pi,r)\frac{L_v^{\gnr}(1-s,\pi,r^\vee)}{L_v^{\gnr}(s,\pi,r)},
\end{equation}
where, denoting by $\Disc_v$ the ideal norm of the different of $F_v$ (i.e., the conductor of $\psi_v$ as above),
\[
\epsilon_v^{\gnr}(s,\pi,r) = c_v q_v^{\expcond_v^{\gnr}(\pi,r)(\frac12-s)}\Disc_v^{N(\frac12-s)},\ \
L_v^{\gnr}(s,\pi,r) = P_v(q_v^{-s})^{-1},\ \ L_v^{\gnr}(s,\pi,r^\vee) = Q_v(q_v^{-s})^{-1}
\]
for some $\expcond_v^{\gnr}(\pi,r)\in\Z$ and polynomials $P_v$ and $Q_v$ of degree $\le n$ satisfying $P_v (0) = Q_v (0) = 1$.
\item For $v\in S_\infty$ we have
\begin{equation} \label{eq: gammaarch}
\gamma_v^{\gnr}(s,\pi,r)=c_v\prod_{i=1}^{m_v}\frac{\Gamma_{F_v}(1-s+\alpha_{v,i}^\vee)}{\Gamma_{F_v}(s+\alpha_{v,i})}
\end{equation}
for some $c_v\in\C^*$, $\alpha_{v,1},\dots,\alpha_{v,m_v},\alpha_{v,1}^\vee,\dots,\alpha_{v,m_v}^\vee\in\C$, where $m_v \le n$.
\item For all $v\in S(\pi)$, the function $\gamma_v^{\gnr}(s,\pi,r)$ has no zeros for $\Re s>\beta$.
\end{enumerate}

Let us make a few comments about these conditions.

First, as before, we do not impose that $\expcond_v^{\gnr}(\pi,r)\ge0$ for $v\in S_f(\pi)$
(although in practice this will always be the case).
Also, for $v$ finite we allow $P_v (X)$ and $Q_v (q_v^{-1} X^{-1})$ to have common zeros.
Thus, $\expcond_v^{\gnr}(\pi,r)$ does not depend only on the function $\gamma_v^{\gnr}(s,\pi,r)$ itself.
However, this ambiguity is immaterial for our purpose since different presentations \eqref{eq: gammadot}
give rise to values of $\expcond_v^{\gnr}(\pi,r)$ which differ by an integer of absolute value $\le n$.

Similarly, for $v\in S_\infty$ we allow the numerator and denominator in the expression \eqref{eq: gammaarch} to have common poles.
We set
\[
\infcond_v^{\gnr}(\pi,r)=\big(\prod_{i=1}^{m_v}(1+\abs{\alpha_{v,i}-1/2})(1+\abs{\alpha_{v,i}^\vee-1/2})\big)^{\frac12}.
\]
As in the proof of Lemma \ref{lem: elem}, $\infcond_v^{\gnr}(\pi,r)$ depends only on the
factor $\gamma_v^{\gnr}(s,\pi,r)$ and not on the choice of $\alpha_{v,1},\dots,\alpha_{v,m_v},\alpha_{v,1}^\vee,\dots,\alpha_{v,m_v}^\vee$.
(In practice we always have $m_v=N$.)

Note that the functional equation \eqref{eq: global FE} implies that for all $v\notin S(\pi)$ we have
\[
\gamma_v^{\gnr}(s,\pi,r)=\frac{L_v(1-s,\pi_v,r^\vee)}{L_v(s,\pi_v,r)}.
\]
Finally, it implies \FE\ with
\[
\gamma_p(s,\pi,r)=\prod_{v\in S(\pi):v|p}\gamma_v^{\gnr}(s,\pi,r)
\]
for each $p\in S_{\Q} (\pi)$, and the second and third conditions of property (FE+) are satisfied. Regarding condition (CC),
by Remark \ref{rem: altproc} the quotient
\[
\frac{\arithcond(\pi,r)}{\prod_{v\in S_f(\pi)}q_v^{\expcond_v^{\gnr}(\pi,r)}}
\]
is a positive integer which divides $\prod_{p\in S_{\Q, f} (\pi)} p^{n[F:\Q]}$.
Therefore, by \eqref{eq: levelSpi} the non-archimedean part of condition (CC) is implied by the condition
\begin{equation} \tag{CC'} \label{eq: locndbnd}
\expcond_v^{\gnr}(\pi,r)\ll 1+\log_{q_v}\level_v(\pi_v),\ \ \ v\in S_f(\pi).
\end{equation}
Note that this property (in contrast to the value of $\expcond_v^{\gnr}(\pi,r)$ itself) depends only on $\gamma_v^{\gnr}(s,\pi,r)$.
The archimedean part of condition (CC) is implied by the condition
\begin{equation} \tag{AF} \label{eq: archbnd}
\infcond_v^{\gnr}(\pi,r)\ll\param(\pi_v)^c,\ \ \ v\in S_\infty,
\end{equation}
where $c$ depends only on $(G,r)$.

\begin{remark} \label{rem: archcc}
In many cases one knows that the archimedean factors are compatible with the Langlands classification in the following sense.
For $v\in S_\infty$, let $W_v$ be the Weil group of $F_v$ and let $\phi_v:W_v\rightarrow\LG$ be the Langlands parameter attached to $\pi_v$.
Then
\begin{equation} \tag{CL} \label{eq: comparch}
\gamma_v^{\gnr}(s,\pi,r)=\gamma_v(s,r\circ\phi_v,\psi_v):=\epsilon_v(s,r\circ\phi_v,\psi_v)L_v(1-s,r^\vee \circ\phi_v)/L_v(s,r\circ\phi_v),
\end{equation}
where the $L$- and $\epsilon$-factors on the right-hand side are as in \cite[\S 3]{MR546607}.
It is easy to see that condition \eqref{eq: comparch} for all $v\in S_\infty$ implies \eqref{eq: archbnd} (with $c=N/2$).
\end{remark}

We sum up the discussion as follows.

\begin{corollary} \label{cor: enufsomegamma}
For a given pair $(G,r)$, suppose that for all $\pi \in \Pi_{\disc}(G(\A))$ the following conditions are satisfied.
\begin{enumerate}
\item \label{item: one} There exists a polynomial $P(s)$, whose degree is bounded in terms of $(G,r)$ only, such that
$P(s)L^{S(\pi)}(s,\pi,r)$ extends to an entire function of finite order.
\item \label{item: two} There exist local factors $\gamma_v^{\gnr}(s,\pi,r)$ (for all places $v$ of $F$) satisfying \eqref{eq: global FE}, \GF,
\eqref{eq: archbnd} and (CC').
\end{enumerate}
Then $(G,r)$ satisfies properties (FE+) and (CC).
\end{corollary}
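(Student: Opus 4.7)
My plan is to prove Corollary \ref{cor: enufsomegamma} by assembling the observations already made in \S\ref{sec: psivconv} preceding the statement. The argument is essentially bookkeeping, so I will just organize it carefully.

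First, I would set
\[
\gamma_p(s,\pi,r) = \prod_{v \in S(\pi),\, v \mid p} \gamma_v^{\gnr}(s,\pi,r), \quad p \in S_{\Q}(\pi).
\]
The shape \eqref{eq: gammaarch} at each archimedean place shows that $\gamma_\infty$ has the form \eqref{eq: gammainfty}, and the shape \eqref{eq: gammadot} at each finite place, together with $q_v = p^{f_v}$ for $v \mid p$, shows that $\gamma_p$ is a rational function in $p^{-s}$. Then \eqref{eq: global FE} applied with $S = S(\pi)$ is exactly \eqref{eq: FE}, so combined with the meromorphic continuation provided by hypothesis (\ref{item: one}), property \FE\ holds.

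Next I would verify the three conditions of Definition \ref{DefinitionFEplus}. Condition (1) is precisely assumption (\ref{item: one}). For condition (2), the bounds $m_v \le n$ at archimedean places and $\deg P_v, \deg Q_v \le n$ at finite places from \GF, summed over $v \mid p$, give uniform bounds on $\deg \gamma_\infty$ and on the degrees of the $R_p$; the uniqueness statements in Lemma \ref{lem: elem} then imply the same bound (up to a factor $[F:\Q]$) for the degrees of the reduced polynomials $P_p$. For condition (3), I would observe that the zeros of the reduced $P_p(p^{-s})$ coincide with those of $\gamma_p(s,\pi,r)$, and hence are zeros of some $\gamma_v^{\gnr}$ with $v \mid p$; by \GF(3) these lie in $\Re s \le \beta$, which is equivalent to the zeros of $P_p$ as a polynomial satisfying $|x| \ge p^{-\beta}$. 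The analogous statement at the archimedean places gives $\Re \alpha_i \ge -\beta$ for the reduced parameters.

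For property \CC, part (\ref{part: condinfty}) of Lemma \ref{lem: elem} applied to the factorization $\gamma_\infty = \prod_{v \in S_\infty} \gamma_v^{\gnr}$ yields $\infcond_\infty(\pi,r) = \prod_{v \in S_\infty} \infcond_v^{\gnr}(\pi,r)$; combined with \eqref{eq: archbnd} and the elementary estimate $\prod_{v \mid \infty} \param(\pi_v)^c \ll \param(\pi_\infty)^{c'}$, this gives \eqref{eq: archimedean}. On the non-archimedean side, Remark \ref{rem: altproc} provides that $\arithcond(\pi,r) / \prod_{v \in S_f(\pi)} q_v^{\expcond_v^{\gnr}(\pi,r)}$ is a positive integer dividing $\prod_{p \in S_{\Q,f}(\pi)} p^{n[F:\Q]}$. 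Applying \eqref{eq: locndbnd} bounds each $q_v^{\expcond_v^{\gnr}(\pi,r)}$ by a constant times $q_v \cdot \level_v(\pi_v)^c$, and then \eqref{eq: levelSpi} lets me absorb the residual primes (excepting the finitely many dividing $n$, which contribute $O(1)$) into a power of $\level(\pi)$, yielding \eqref{eq: arithcndbnd}. The only mildly delicate point in the whole argument is that the lower bound on zeros of the unreduced polynomials $P_v$ does not follow from \GF(3) alone, since a zero of $P_v$ could be canceled by a pole of $Q_v$; this forces one to pass to the reduced polynomials $P_p$, for which the desired bound is inherited from the zeros of $\gamma_p$ itself.
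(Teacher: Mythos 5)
Your proof is correct and follows essentially the same route as the paper's own (implicit) argument, which is the discussion in \S\ref{sec: psivconv} preceding the corollary; the paper states the corollary precisely as ``summing up'' that discussion. One small inaccuracy: the claimed equality $\infcond_\infty(\pi,r) = \prod_{v\in S_\infty}\infcond_v^{\gnr}(\pi,r)$ fails at complex places, since $\Gamma_{\C}(z)=\Gamma_{\R}(z)\Gamma_{\R}(z+1)$ contributes \emph{two} $\Gamma_\R$-parameters $\alpha_{v,i}$ and $\alpha_{v,i}+1$ to $\gamma_\infty$, whereas $\infcond_v^{\gnr}$ as defined in the text only involves $\alpha_{v,i}$; however the elementary bound $1+\abs{\alpha+1/2}\le 2(1+\abs{\alpha-1/2})$ yields $\infcond_\infty(\pi,r)\ll\prod_{v\in S_\infty}\infcond_v^{\gnr}(\pi,r)^2$, which is all that is needed for \eqref{eq: archimedean}.
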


The following observation will be useful.
\begin{lemma} \label{lem: cuspenuf}
Suppose that for all triplets $(M,\pi,r')$, where $M$ is a Levi subgroup of $G$ defined over $F$ (including $G$ itself),
$\pi\in\Pi_{\cusp}(M(\A))$ is a \emph{cuspidal} representation of $M(\A)$, and $r'$ is an irreducible constituent of $r\rest_{\LM}$,
conditions \eqref{item: one} and \eqref{item: two} of Corollary \ref{cor: enufsomegamma} are satisfied (with $M$ instead of $G$ and $r'$ instead of $r$).
Then 
$(G,r)$ satisfies properties (FE+) and (CC).
\end{lemma}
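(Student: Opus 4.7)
The plan is to reduce any $\pi \in \Pi_{\disc}(G(\A))$ to a cuspidal datum on a Levi subgroup via Langlands' spectral theory and then apply Corollary \ref{cor: enufsomegamma} to $(G,r)$. By the spectral decomposition of Langlands, completed by Mœglin–Waldspurger, there exist a Levi subgroup $M$ of $G$ defined over $F$, a cuspidal representation $\sigma \in \Pi_{\cusp}(M(\A))$, and a parameter $\lambda_0 \in \mathfrak{a}_{M,\C}^*$ such that $\pi$ is a constituent of a residue of the Eisenstein series built from $(\sigma,\lambda_0)$. In particular, $S(\pi) = S(\sigma)$, and at every place $v \notin S(\pi)$ the Hecke–Frobenius parameter $t_{\pi_v}$ is the image of $t_{\sigma_v} \cdot q_v^{\lambda_0}$ under the canonical embedding $\LM \hookrightarrow \LG$. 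The norm of $\Re \lambda_0$ is bounded uniformly in $\pi$ in terms of $G$ alone.

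Decompose $r\rest_{\LM} = \bigoplus_{i=1}^k r'_i$ into irreducible $W_F$-semisimple constituents, with $k \le \deg r$. Each $r'_i$ carries a character of the connected component of the center of $\LM$ which, paired with $\lambda_0$, produces a complex shift $\mu_i$ with $\abs{\mu_i}$ bounded in terms of $G$ and $r$ only. Unwinding the definitions yields the factorization
\[
L^{S(\pi)}(s, \pi, r) = \prod_{i=1}^k L^{S(\sigma)}(s + \mu_i, \sigma, r'_i).
\]
By hypothesis each triple $(M,\sigma,r'_i)$ satisfies the two conditions of Corollary \ref{cor: enufsomegamma}. We check them for $(G,r,\pi)$. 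Condition \eqref{item: one} follows by taking the polynomial $\prod_i P_i(s + \mu_i)$, where $P_i$ is the polynomial furnished by the hypothesis for $(M, r'_i)$; its degree is controlled by $k \cdot \max_i \deg P_i$ and hence bounded in $(G,r)$ alone. For condition \eqref{item: two}, define
\[
\gamma_v^{\gnr}(s, \pi, r) = \prod_{i=1}^k \gamma_v^{\gnr}(s + \mu_i, \sigma, r'_i).
\]
The global functional equation \eqref{eq: global FE} then follows from the corresponding equations for each $(M,\sigma,r'_i)$, together with $r^\vee\rest_{\LM} = \bigoplus_i (r'_i)^\vee$ and the observation that the shifts for the contragredient constituents are $-\mu_i$. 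The three conditions \GF\ (rationality in $q_v^{-s}$, the archimedean shape \eqref{eq: gammaarch}, and non-vanishing in a right half-plane) are preserved under finite products, after enlarging the constant $\beta$ by the uniform bound on $\abs{\Re \mu_i}$.

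It remains to verify the conductor bounds \eqref{eq: archbnd} and \eqref{eq: locndbnd} for $(G,r,\pi)$. At archimedean places the infinitesimal character of $\pi_v$ differs from that of $\sigma_v$ by the bounded parameter $\lambda_0$ (as $\pi_v$ is a subquotient of a parabolic induction from $\sigma_v \otimes e^{\langle \lambda_0, \cdot \rangle}$), so $\param(\pi_v)$ and $\param(\sigma_v)$ are uniformly comparable, giving \eqref{eq: archbnd}. At finite places the analogous structural fact that $\pi_v$ is a subquotient of a parabolic induction from $\sigma_v$ twisted by $\lambda_0$ yields a bound $\log_{q_v}\level_v(\sigma_v) \ll 1 + \log_{q_v}\level_v(\pi_v)$ with a uniform constant, whence \eqref{eq: locndbnd} follows from the hypothesis applied to $\sigma$. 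Corollary \ref{cor: enufsomegamma} then delivers (FE+) and (CC) for $(G,r)$. The main obstacle is the last non-archimedean conductor comparison: while standard in spirit, it requires controlling the level of the inducing cuspidal $\sigma_v$ by the level of a subquotient of its induction, with constants independent of $\pi$.
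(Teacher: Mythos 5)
Your argument follows the paper's proof essentially verbatim: reduce to the cuspidal support $(M,\sigma,\lambda_0)$, decompose $r\rest_{\LM}$, write the partial $L$-function and the local $\gamma$-factors for $(\pi,r)$ as shifted products over the constituents $r'_i$ of the corresponding objects for $(\sigma,r'_i)$, and invoke Corollary~\ref{cor: enufsomegamma}.

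Two small imprecisions are worth flagging, though neither is a genuine gap. First, $S(\pi)=S(\sigma)$ is not true in general; only $S(\sigma)\subset S(\pi)$ holds (an unramified $\sigma_v$ need not produce an unramified subquotient $\pi_v$ of $I(\sigma_v,\lambda_0)$). Accordingly the factorization should be written with $S(\pi)$ on both sides, supplying the (unramified) local factors of $\sigma$ at the places of $S(\pi)\setminus S(\sigma)$. Second, it is only $\abs{\Re\mu_i}$ that is bounded in terms of $(G,r)$, not $\abs{\mu_i}$: the imaginary part of $\lambda_0$ is unbounded and is instead absorbed into the comparison $\param(\sigma_\infty)+\norm{\lambda_0}^2\ll\param(\pi_\infty)$, which is what the archimedean conductor bound actually requires. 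Your archimedean paragraph tacitly does the right thing, but the earlier assertion that $\abs{\mu_i}$ is uniformly bounded should be corrected. The non-archimedean level comparison you single out as the ``main obstacle'' is indeed the crux, and the paper asserts it as a known fact in the form $\level(\sigma)\ll\level(\pi)$.
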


\begin{proof}
Let $\pi\in\Pi_{\disc}(G(\A))$. Then there exist $\sigma\in\Pi_{\cusp}(M(\A))$ and $\lambda\in\aaa_{M,\C}^*$ such that
$\pi$ is a subquotient of the representation $I(\sigma,\lambda)$ parabolically induced from the twist of $\sigma$ by the character
of $M(\A)/M(\A)^1$ determined by $\lambda$ \cite[Supplement]{MR546598}.
We have $S(\sigma)\subset S(\pi)$, $\level(\sigma)\ll\level(\pi)$ and $\param(\sigma_\infty)+\norm{\lambda}^2\ll\param(\pi_\infty)$.
Also, by the unitarity of $\pi$, $\norm{\Re\lambda}$ is bounded in terms of $G$ only.
Decompose $r\rest_{\LM}=\oplus_{i=1}^kr_i$ according to the central character, and let $\beta_i^\vee$ be the element
of $\aaa_M$ corresponding to the central character of $r_i$. Then
\[
L^{S(\pi)}(s,\pi,r)=\prod_{i=1}^kL^{S(\pi)}(s+\sprod{\lambda}{\beta_i^\vee},\sigma,r_i).
\]
We take
\[
\gamma_v^{\gnr}(s,\pi,r)=\prod_{i=1}^k\gamma_v^{\gnr}(s+\sprod{\lambda}{\beta_i^\vee},\sigma,r_i).
\]
Then properties \eqref{item: one} and \eqref{item: two} for $(G,\pi,r)$ immediately follow from the corresponding properties
of $(M,\sigma,r_i)$, $i=1,\dots,k$ (cf.~Remark \ref{rem: altproc}).
\end{proof}



\subsection{} \label{sec: relparams}
At this stage it will be useful to introduce a slight refinement of the notion of level.
For the rest of this section let $G'$ be a closed connected normal subgroup of $G$ and $p:H\rightarrow G'$ a finite covering of $G'$
(with $H$ connected), both defined over $F$.
Note that $p(H(F))$ is a normal subgroup of $G'(F)$ and that $G'(F)/p(H(F))$ is an abelian group of finite exponent (bounded by the size of the kernel of $p$), and
similarly for $p(H(\A))\subset G'(\A)$.

For any $\pi\in\Pi_{\disc}(G(\A))$ we write $\level(\pi;p)=\inorm(\nnn)$, where $\nnn$ is the largest ideal such that
$\pi^{\K(\nnn)\cap p(H(\A))}\ne0$.
(Note that the same notion was considered in \cite[\S 5.1]{MR3352530}, where the notation $\level(\pi; p(H(\A)))$ was used.)
Analogously, we define $\level_v(\pi_v;p)$ for a smooth representation $\pi_v$ of $G(F_v)$.
We also set
\begin{equation} \label{def: relparam}
\param(\pi_\infty;p) =\param^{G'}(\pi_\infty\rest_{G'(F_\infty)}),
\end{equation}
where on the right-hand side $\param$ is taken with respect to $G'$.
Alternatively, $\param(\pi_\infty;p) = 1 + \norm{\chi_{\pi_\infty;p}}^2$,
where $\chi_{\pi_\infty;p}$
is the projection of $\chi_{\pi_\infty}$ to $(\Crtn_{\C}\cap\Lie(G'(F_\infty))_{\C})^*$.

\begin{lemma} \label{lem: restH}
There exists an integer $N_1$, depending only on $p$ and $G$, such that
for any $\pi\in\Pi_{\disc}(G(\A))$ there exists $\sigma\in\Pi_{\disc}(G'(\A))$ and a character $\chi$ of $G'(\A)$
trivial on $G'(F)p(H(\A))$ such that $\level(\sigma)$ divides $N_1\level(\pi;p)$ and $\sigma\chi$ is a subrepresentation of $\pi\rest_{G'(\A)}$.
In particular, $\param^{G'}(\sigma)=\param(\pi_\infty;p)$.
\end{lemma}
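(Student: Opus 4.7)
The approach is to extract from $\pi\rest_{G'(\A)}$ an irreducible subrepresentation $\sigma_0\in\Pi_{\disc}(G'(\A))$, and then to twist by a suitable character trivial on $G'(F)p(H(\A))$ in order to upgrade invariance under the small group $\K_G(\nnn)\cap p(H(\A))$ to invariance under a standard congruence subgroup of $G'(\A_\fin)$. To set this up, I would realize $\pi$ as a subspace of $L^2(A_GG(F)\bs G(\A))$ and use $A_{G'}\subset A_G$ (up to a compact subgroup) together with restriction of smooth automorphic forms from $G$ to $G'$ to embed the $G'(\A)$-cyclic module generated by any $K$-finite vector $v\in\pi$ into the discrete spectrum of $G'(\A)$; by admissibility of $\pi$ and a standard Clifford-type argument, this module is a finite direct sum of irreducibles $\sigma_0,\sigma_0',\ldots\in\Pi_{\disc}(G'(\A))$.

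Taking $v\in\pi$ non-zero and invariant under $K':=\K_G(\nnn)\cap p(H(\A))$ with $\inorm(\nnn)=\level(\pi;p)$, I would choose an irreducible summand $\sigma_0\subset\pi\rest_{G'(\A)}$ to which $v$ has non-zero projection $v_0\in\sigma_0^{K'}$. Set $K:=\K_G(\nnn)\cap G'(\A)$. Since $p(H(\A))$ is normal in $G'(\A)$ with abelian cokernel (controlled by the Galois cohomology of the finite group $\Ker p$), $K/K'$ is a finite abelian group acting on the finite-dimensional space $\sigma_0^{K'}$; after replacing $v_0$ by a $K/K'$-eigencomponent I may assume it transforms under $K$ via a single character $\chi_0$ of $K/K'$. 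The next step is to extend $\chi_0$ to a character $\chi$ of $G'(\A)$ trivial on $G'(F)p(H(\A))$ and to set $\sigma:=\sigma_0\otimes\chi^{-1}$, so that $v_0$ becomes $K$-invariant in $\sigma$. Comparing $K$ to the standard principal congruence subgroup $\K_{G'}(\cdot)$ of $G'(\A_\fin)$, which is commensurable with $\K_G(\cdot)\cap G'(\A_\fin)$ with index bounded in terms of the pair $G'\subset G$ only, yields an integer $N_1$ depending only on $p$ and $G$ such that $\K_{G'}(N_1\nnn)\subset K$, whence $\level(\sigma)\mid N_1\level(\pi;p)$. The identity $\param^{G'}(\sigma_\infty)=\param(\pi_\infty;p)$ is immediate from \eqref{def: relparam}: $\chi$ is trivial on $p(H(\A))$ and hence on the identity component $G'(F_\infty)^{\circ}\subset p(H(F_\infty))$, so $d\chi_\infty=0$ and the twist does not shift the infinitesimal character of $\sigma_{0,\infty}$.

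The main obstacle is the character extension in the middle step: lifting $\chi_0$ from $K/K'$ to a character of $Q:=G'(\A)/G'(F)p(H(\A))$ requires that the composite $K\to Q$ factor through $K/K'$, equivalently $K\cap G'(F)p(H(\A))\subset K'$, possibly after enlarging $\nnn$ by a fixed multiple. This containment encodes the adelic-global interplay of the isogeny $p$ and should be handled by invoking strong approximation for the simply connected cover of $G'$ together with finiteness properties of $H^1(F,\Ker p)$ localized outside a fixed finite set of places; the resulting additional constant is absorbed into $N_1$.
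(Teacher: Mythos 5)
Your proposal tracks the paper's strategy in broad outline (extract a constituent of $\pi\rest_{G'(\A)}$, detect an abelian character coming from $p(H(\A))$, twist it away), but it leaves unaddressed exactly the two points where the paper's proof does real work, and the sketch you give for patching one of them would not succeed as stated.

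First, the existence of a \emph{non-zero} $v_0\in\sigma_0^{K'}$. You ``use restriction of smooth automorphic forms from $G$ to $G'$ to embed the $G'(\A)$-cyclic module \ldots into the discrete spectrum of $G'(\A)$,'' but $G'(\A)$ has measure zero in $G(\A)$, and a discrete automorphic form on $G(F)\bs G(\A)$ may well vanish identically along $G'(F)\bs G'(\A)$. The paper addresses this by introducing the centralizer $C$ of $G'$, using finiteness of the class number to show that $X=G(F)\bs G(\A)/G'(\A)C(F_\infty)\K$ is finite, and replacing $\varphi$ by a translate $\varphi(\cdot\,c g_i k)$ that does not vanish on $G'(F)\bs G'(\A)$. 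Your sketch does not engage with this; it needs to be added before the eigencomponent $v_0$ can be taken.

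Second, and more seriously, the claimed containment $K\cap G'(F)p(H(\A))\subset K'$ ``possibly after enlarging $\nnn$ by a fixed multiple.'' What actually controls this obstruction is the image $\Gamma$ of $G'(F)\cap p(H(\A))\tilde\K$ in $G'(\A_\fin)/p(H(\A_\fin))$. The inclusion you want follows once one knows (i) $\Gamma$ is \emph{finite} and (ii) $\Gamma$ injects into $G'(F_{S_0})/p(H(F_{S_0}))$ for a fixed finite $S_0$; then shrinking $K$ only inside $S_0$ (a bounded cost absorbed into $N_1$) makes every contributing $\gamma$ land in $p(H(F_v))$ for $v\in S_0$, hence, by the injectivity, in $p(H(\A_\fin))$. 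But establishing (i) and (ii) \emph{is} the substantive part of the paper's proof: one first shows $(G'(F)\cap p(H(\A))\tilde\K)/p(H(F))$ is finitely generated (via finiteness of the double coset space $H(F)\bs H(\A)/p^{-1}(\tilde\K)$ and finite generation of the resulting arithmetic subgroups), and then uses that a finitely generated subgroup of a group of finite exponent is finite. Your proposed shortcut — strong approximation for the simply connected cover together with ``finiteness of $H^1(F,\Ker p)$ localized outside a fixed finite set'' — does not obviously produce these facts; $H^1(F,\Ker p)$ is typically infinite (e.g.\ $F^\times/(F^\times)^2$ for $\Ker p=\mu_2$), so the finiteness of $\Gamma$ is not a formal cohomological statement but genuinely uses the compactness/finite-generation input. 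As it stands, this step is a gap, not a routine reduction.

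Finally, note that the paper separates the argument into two independent pieces — passing from $G$ to $G'$ (with $\level(\cdot;p)$ preserved up to a fixed factor), and then, for $G'=G$, removing the $p(H(\A))$-twist using the local characterization $\level_v(\sigma_v;p)=\min_{\chi_v}\level_v(\sigma_v\chi_v^{-1})$ from \cite[Lemma 5.1]{MR3352530} together with the finiteness of $\Gamma$. Your one-pass global eigenvector argument is a reasonable variant, but it does not sidestep either of the two hard points above; once they are filled in, the two routes are essentially equivalent.
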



\begin{proof}
We first reduce the lemma to the case $G'=G$. Namely, we show that there exists an integer $N_2$, depending only on $G$ and $G'$,
such that for any $\pi\in\Pi_{\disc}(G(\A))$ there exists a subrepresentation
$\sigma\in\Pi_{\disc}(G'(\A))$ of $\pi |_{G'(\A)}$ with
$\level (\sigma;p)$ dividing $N_2 \level (\pi;p)$.

Let $C$ be the centralizer of $G'$ in $G$.
Then $CG'=G$, and therefore $G'(F_\infty) C(F_\infty)$ has finite index in $G(F_\infty)$.
Since the maximal compact subgroup $\K_\infty$ meets every connected component of $G(F_\infty)$, we have
$G(F_\infty) = G' (F_\infty) C(F_\infty) \K_\infty$.
Combining this fact with the well-known finiteness of the class number of $G$ \cite[Theorem 8.1]{MR1278263}, we see that the coset space
\[
X = G(F) \bs G (\A) / G' (\A) C(F_\infty) \K
\]
is finite. Fix a set of representatives $\{g_1, \ldots, g_r\}$ for the classes of $X$.

Let $\pi\in\Pi_{\disc}(G(\A))$.
Note that since $CG'=G$, $G(\A)/C(\A)G'(\A)$ is compact, and therefore, the restriction $\pi\rest_{G'(\A)}$ decomposes
into a direct sum of irreducible representations.
Let $\phi$ be an automorphic form on $G(\A)$ in the isotypic space of $\pi$,
which is right-invariant under the group $\K(\nnn) \cap p(H(\A_{\fin}))$.
Then there exist $i=1,\dots,r$, $k \in \K_M$ and $c \in C(F_\infty)$, such that the function $\tilde\phi:=\phi (\cdot c g_i k)$ does not vanish on
$G' (F) \bs G' (\A)$.
The function $\tilde\phi$ is clearly invariant under $g_i \K(\nnn) g_i^{-1} \cap p(H(\A_{\fin}))$.
Decomposing the span of $\tilde\phi\rest_{G'(\A)}$ under $G'(\A)$ into irreducibles, we obtain an irreducible subrepresentation
$\sigma\in\Pi_{\disc}(G'(\A))$ of $\pi |_{G'(\A)}$ such that
$\level (\sigma;p)$ divides $N_2 \level (\pi;p)$, where the integer $N_2$ depends on $G$ only, as required.

So assume from now on that $G'=G$.
It remains to show that there exists an integer $N_3$, depending only on $p$, such that for any $\sigma\in\Pi_{\disc}(G(\A))$ there exists a character
$\chi: G(\A)/ G(F) p(H(\A))\to \C^\times$ such that
$\level (\sigma\chi^{-1})$ divides $N_3 \level (\sigma;p)$.
Recall from \cite[Lemma 5.1]{MR3352530} that
\[
\level_v (\sigma_v ;p)= \min_{\chi_v} \level_v (\sigma_v \chi_v^{-1}),
\]
where $\chi_v$ ranges over the characters of $G(F_v)/ p(H(F_v))$.
Since $G(F_v)/ p(H(F_v))$ is finite for all $v$, it suffices to show the following assertion.
There exists a finite set $S_0$ of finite places of $F$, depending on $p$ only, such that for any finite set $S$ of finite places of $F$ outside $S_0$
and a family of characters $\tilde \chi_v$ of $G(F_v)/ p(H(F_v))$ for $v \in S$, there exists a global character
$\chi: G(\A) / G(F)p(H(\A))\to \C^\times$, unramified outside $S \cup S_0$, such that
$\chi_v \tilde\chi_v^{-1}$ is unramified for all $v \in S$. (We use the fact that
$\level_v(\pi_v\chi_v)\ll \level_v(\pi_v)$ for any character $\chi_v$ of $G(F_v)/p(H(F_v))$, $v\in S_0$.)

For convenience we write $\tilde\K=G(F_\infty)\K_{\fin}$.
We first show that the group
\[
(G(F)\cap p(H(\A))\tilde\K)/p(H(F))
\]
is finitely generated.
Indeed, choose representatives $x_1,\dots,x_k$ for the finite double coset
space $H(F)\bs H(\A)/p^{-1}(\tilde \K)$, and let $y_i=p(x_i)$.
Then $G(F)\cap p(H(\A))\tilde\K$ is the union over $i=1,\dots,k$ of
\[
G(F)\cap p(H(F))y_i\tilde\K=p(H(F))(G(F)\cap y_i\tilde\K).
\]
If $\delta_i\in G(F)\cap y_i\tilde\K$, then $G(F)\cap y_i\tilde\K=(G(F)\cap \delta_i\tilde\K \delta_i^{-1})\delta_i$,
which is a coset of a finitely generated group. Our claim follows.

Let $\Gamma$ be the image of $G(F)\cap p(H(\A))\tilde\K$ in $G(\A_{\fin}) / p(H(\A_{\fin}))$.
Since the latter is abelian of finite exponent, $\Gamma$ is necessarily finite, and therefore projects injectively into
$G (F_{S_0}) / p(H(F_{S_0}))$ for a suitable finite set $S_0$ depending only on $p$.
It is therefore possible to extend the character $\prod_{v \in S} \tilde\chi_v$ of $\K_S$ to a character of
$\tilde\K / (G(F) p(H(\A)) \cap\tilde\K)$, which is trivial on $\K_v$ for any $v\notin S \cup S_0$.
Extending this character to $G(\A) / G(F)  p(H(\A))$ we obtain the desired $\chi$.

The last assertion of the lemma is clear, since $\chi$ is trivial on the connected component of the identity of $G(F_\infty)$.
\end{proof}


The following observation will also be useful.
\begin{lemma} \label{lem: norsgr}
Assume that $G'$ contains the derived group of $G$ and let $^L\iota:\,^LG\rightarrow\,^LG'$ be the corresponding homomorphism of $L$-groups.
Let $r'$ be a representation of $^LG'$ and let $r=r'\circ\, ^L\iota$.
Assume that $(G,r)$ satisfies (FE+) and (CC). Then $(G',r')$ satisfies (FE+) and (CC).
\end{lemma}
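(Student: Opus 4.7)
The strategy is to deduce (FE+) and (CC) for $(G', r')$ from those of $(G, r)$ by lifting automorphic representations from $G'(\A)$ to $G(\A)$. Since $G'$ contains the derived group of $G$, the quotient $G/G'$ is abelian, which is what makes the lifting available.

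The main step is, for any $\pi' \in \Pi_{\disc}(G'(\A))$, to construct $\pi \in \Pi_{\disc}(G(\A))$ such that, possibly after twisting $\pi'$ by a Hecke character of $G'(\A)$ which extends to $G(\A)$, the representation $\pi'$ embeds into $\pi\rest_{G'(\A)}$. Such a lift is obtained by extending an automorphic form generating $\pi'$ from $G'(F) \bs G'(\A)$ to $G(F) \bs G(\A)$ via the abelian quotient $G(F) G'(\A) \bs G(\A)$ and decomposing the resulting $G(\A)$-representation. Furthermore, by exploiting the freedom to twist $\pi$ by characters of $G(\A)/G(F)G'(\A)$, one arranges $\level(\pi) \ll \level(\pi')^a$ and $\param(\pi_\infty) \ll \param(\pi'_\infty)^a$ for some exponent $a$ depending only on $G$ and $G'$. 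The level control parallels the proof of Lemma \ref{lem: restH}: the minimum of $\level_v(\pi_v)$ over local twists by characters of $G(F_v)$ trivial on $G'(F_v)$ is essentially $\level_v(\pi'_v)$, and the local choices are assembled into a global character via class field theory.

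With the lift in hand, the compatibility of the unramified local Langlands correspondence with ${}^L\iota$ yields $t_{\pi'_v} = {}^L\iota(t_{\pi_v})$ at each $v$ unramified for both sides, whence $L_v(s, \pi', r') = L_v(s, \pi, r)$ because $r = r' \circ {}^L\iota$. Consequently the partial $L$-functions $L^{S}(s, \pi', r')$ and $L^{S}(s, \pi, r)$ agree for any finite set $S \supset S(\pi) \cup S(\pi')$. Property (FE+) for $(G, r)$ then furnishes a meromorphic continuation of $L^{S}(s, \pi, r)$ with a bounded number of poles and a functional equation of the required shape; by the minimality discussion of Remark \ref{rem: altproc}, these properties transfer to $L^{S(\pi')}(s, \pi', r')$ after absorbing the finitely many extra Euler factors at $S(\pi) \setminus S(\pi')$ into those at $S(\pi')$. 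The bounds (CC) for $(G, r)$ applied to $\pi$, combined with the level and parameter comparisons, then yield (CC) for $(G', r')$.

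The main obstacle is the level-controlled lifting: a naive lift could have arbitrarily large level compared to $\pi'$, so one must use the twisting freedom to minimize levels locally at each ramified place and then globalize via an argument dual to the one in Lemma \ref{lem: restH}. Once this lifting is in place, the remaining steps are formal consequences of the compatibility $r = r' \circ {}^L\iota$ together with the minimality property of the reduced $L$-function.
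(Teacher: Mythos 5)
Your proposal follows the same overall architecture as the paper's proof: lift $\pi'$ to some $\pi\in\Pi_{\disc}(G(\A))$ with controlled level and infinitesimal character, observe that the unramified local factors agree since $r=r'\circ{}^L\iota$, and then transfer (FE+) and (CC) to $(G',r')$ via the minimality discussion of Remark~\ref{rem: altproc}.

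Two points of divergence worth noting. First, for the bare existence of a $\pi\in\Pi_{\disc}(G(\A))$ whose restriction to $G'(\A)$ contains $\pi'$, the paper cites the result of Hiraga--Saito [MR2918491, Theorem 4.13, Remark 4.23] rather than proving it; your sketch of ``extending an automorphic form via the abelian quotient $G(F)G'(\A)\bs G(\A)$'' describes the right heuristic, but the details (closedness of $G(F)G'(\A)$ in $G(\A)$, automorphy of the extension, decomposition into the discrete spectrum) are not automatic and would require the same kind of careful argument that Hiraga--Saito carry out. If you want a self-contained proof, you should either reproduce that argument or cite the reference; as written this is the one genuine soft spot. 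Second, for the level and archimedean-parameter control, the paper passes through the quotient torus $T=G/G'$ and applies Lemma~\ref{lem: torus}, using injectivity of $G(F_v)/G'(F_v)\to T(F_v)$ to replace the local twisting characters by characters of the torus and globalize them; your appeal to ``class field theory'' and the analogy with Lemma~\ref{lem: restH} is pointing at the same mechanism but should really be routed through Lemma~\ref{lem: torus}, which is the tool the paper has set up precisely for this purpose. With those two references made explicit, your argument becomes the paper's.
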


We first need the following standard result.
\begin{lemma} \label{lem: torus}
Let $T$ be a torus over $F$. Then there exists a compact subset $C$ of the Pontryagin dual $T(F_\infty)^D =
\operatorname{Hom} (T(F_\infty), \C^1)$ of $T(F_\infty)$, such that
for any character $\chi$ of $T(\A)$ there exists a character $\tilde\chi$ of $T(F)\bs T(\A)$ such that
$\tilde\chi\chi^{-1}$ is unramified at all finite places and $\tilde\chi_\infty\chi^{-1}_\infty\in C$.
\end{lemma}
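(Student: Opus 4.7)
The plan is to combine Pontryagin duality for locally compact abelian groups with the classical finiteness of the class group of $T$. Set $K_T := \prod_{v\nmid\infty} T(\mathfrak{o}_{F_v})$, the maximal compact subgroup of $T(\A_{\fin})$, and let $\mathcal{E} := T(F) \cap K_T$, viewed diagonally inside $T(F_\infty)$ via $T(F) \hookrightarrow T(F_\infty)$. Since $T(F)$ is discrete in $T(\A)$, $K_T$ is compact, and the projection $T(F_\infty) \times K_T \to T(F_\infty)$ is proper, $\mathcal{E}$ is discrete in $T(F_\infty)$. In particular, its Pontryagin dual $\mathcal{E}^D := \operatorname{Hom}(\mathcal{E},\C^1)$ is compact.

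Next I would reduce the problem to a single consistency condition. If $\tilde\chi$ is trivial on $T(F)$ and $\tilde\chi|_{K_T} = \chi|_{K_T}$, then evaluating both conditions on $\gamma \in \mathcal{E}$ (embedded diagonally) forces $\mu|_\mathcal{E} = (\chi|_\mathcal{E})^{-1}$, where $\mu := \tilde\chi_\infty \chi_\infty^{-1}$ and $\chi|_\mathcal{E}(\gamma) := \chi_\infty(\gamma)\chi_{\fin}(\gamma)$. Conversely, given $\mu \in T(F_\infty)^D$ satisfying this consistency, the prescription $\tilde\chi_\infty := \chi_\infty \mu$ together with $\tilde\chi|_{K_T} := \chi|_{K_T}$ defines a character on $T(F_\infty) \cdot K_T$ that extends unambiguously, jointly with the constraint of triviality on $T(F)$, to the subgroup $H := T(F)\cdot T(F_\infty)\cdot K_T$, because $T(F) \cap (T(F_\infty)\cdot K_T) = \mathcal{E}$ and the consistency ensures compatibility on the overlap. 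Finally, this character of $H$, trivial on $T(F)$, extends to a character of $T(\A)$ trivial on $T(F)$: $H$ is an open finite-index subgroup (the class group of $T$ being finite, cf.~\cite[Theorem 8.1]{MR1278263}), hence closed, so the restriction map $T(\A)^D \to H^D$ is surjective by Pontryagin duality.

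It therefore suffices to produce a compact set $C \subset T(F_\infty)^D$ which surjects onto $\mathcal{E}^D$ under the restriction map $r\colon T(F_\infty)^D \to \mathcal{E}^D$. Since $\mathcal{E}$ is closed in $T(F_\infty)$, the sequence $0 \to \mathcal{E}^\perp \to T(F_\infty)^D \to \mathcal{E}^D \to 0$ is exact, and $r$ is a continuous open surjection onto the compact group $\mathcal{E}^D$. A routine covering argument yields $C$: choose a relatively compact open neighborhood $U$ of the identity in $T(F_\infty)^D$; by compactness of $\mathcal{E}^D$, cover it by finitely many translates $\mathcal{E}^D = \bigcup_{i=1}^n \psi_i \cdot r(U)$; lift each $\psi_i$ to $\tilde\psi_i \in T(F_\infty)^D$; then $C := \bigcup_{i=1}^n \tilde\psi_i \cdot \overline{U}$ is compact and $r(C) = \mathcal{E}^D$. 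Given $\chi$, picking $\mu \in C$ with $\mu|_\mathcal{E} = (\chi|_\mathcal{E})^{-1}$ and extending $\tilde\chi$ as above completes the proof. The only nontrivial input is the finiteness of the class group of $T$; I expect no further obstacle, since everything else is formal manipulation with Pontryagin duality.
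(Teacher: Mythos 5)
Your proof is correct and follows essentially the same strategy as the paper: identify the arithmetic subgroup $T(F)\cap K_T$ (the paper's $\Gamma$) as discrete in the archimedean part, use Pontryagin duality to produce a compact set $C$ of archimedean characters surjecting onto its dual, and then extend from the open finite-index subgroup $T(F)\cdot T(F_\infty)\cdot K_T$ using the finiteness of the class number of $T$. The only cosmetic difference is that the paper factors through $T(\A)^1$ and $T(F_\infty)^1$ (invoking that $\Gamma$ is a lattice there), whereas you work with $T(\A)$ and $T(F_\infty)$ directly and correctly observe that discreteness of $\mathcal{E}$, rather than cocompactness, suffices for the covering argument.
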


\begin{proof}
Indeed, let $T(F_\infty)^1=T(F_\infty)\cap T(\A)^1$. Then $X:=T(F)T(F_\infty)^1\prod_{v\text{ finite}}T(\OO_v)$ is a closed subgroup of finite index
of $T(\A)^1$ and the group $\Gamma:=T(F)\cap T(F_\infty)^1\prod_{v\text{ finite}}T(\OO_v)$ is a lattice in $T(F_\infty)^1$.
Therefore, there exists a compact subset $C$ of $(T(F_\infty)^1)^D$, such that its image under the restriction map
\[
(T(F_\infty)^1)^D\rightarrow \Gamma^D
\]
is onto. Thus, there exists a character $\tilde\chi$ of $X$, trivial on $T(F)$, whose restriction to $T(F_\infty)^1$
is in $\chi_\infty C$ and whose restriction to $\prod_{v\text{ finite}}T(\OO_v)$ is $\chi$. Extending $\tilde\chi$ arbitrarily to
$T(\A)^1$ and setting $\tilde\chi |_{A_T} = \chi |_{A_T}$, we obtain the assertion.
\end{proof}

\begin{proof}[Proof of Lemma \ref{lem: norsgr}]
We show that for any given $\pi'\in\Pi_{\disc}(G'(\A))$ we can find $\pi\in\Pi_{\disc}(G(\A))$ such that $\pi'$ is a subrepresentation
of $\pi\rest_{G'(\A)}$, $\level(\pi)\ll\level(\pi')$ and $\param(\pi_\infty)\ll\param(\pi'_\infty)$.
By \cite[Theorem 4.13, Remark 4.23]{MR2918491} there exists $\pi\in\Pi_{\disc}(G(\A))$ such that $\pi'$ occurs in $\pi\rest_{G'(\A)}$.
Let $T=G/G'$. As in the proof of \cite[Lemma 5.1]{MR3352530} we have
$\level_v(\pi'_v) \ge \min_{\omega_v}\level_v(\pi_v\otimes\omega_v)$, where $\omega_v$ ranges over the characters
of $G(F_v)/G'(F_v)$, and similarly for $\param(\pi'_\infty)$.
Since the map $G(F_v)/G'(F_v)\rightarrow T(F_v)$ is injective, we can vary over the characters of $T(F_v)$ instead.
Hence the claim follows from Lemma \ref{lem: torus}.

Finally, since $\pi'$ occurs in $\pi\rest_{G'(A)}$ we have $S(\pi')\subset S(\pi)$ and $L^{S(\pi)}(s,\pi,r)=L^{S(\pi)}(s,\pi',r')$.
It is now easy to conclude the assertion of the lemma from Remark \ref{rem: altproc}.
\end{proof}

\section{Global normalizing factors and $L$-functions} \label{TWN}
In this section we consider the global normalizing factors associated to intertwining operators. We switch therefore to a slightly different setting.
Let $G$ be an isotropic reductive group defined over $F$, and $M$ a proper Levi subgroup of $G$ containing a fixed maximal $F$-split torus $T_0$.
As usual, we let $\PPP(M)$ be the set of all parabolic subgroups of $G$, defined over $F$, with Levi subgroup $M$.
For any $P\in\PPP(M)$ with unipotent radical $N_P$ let $\AF^2(P)$ be the space of automorphic forms $\varphi$ on $N_P(\A)M(F)\bs G(\A)$
such that $\modulus_P^{-\frac12}\varphi(\cdot k)\in L^2_{\disc}(A_M M(F)\bs M(\A))$ for all $k\in\K$, where
$\modulus_P$ denotes the modulus function of $P(\A)$.
For each pair $P,Q\in\PPP(M)$ there is a global intertwining operator
\[
M_{Q|P} (\lambda): \AF^2(P) \to \AF^2(Q),
\]
which has meromorphic continuation in $\lambda\in\aaa_{M,\C}^*$ \cite[\S 1]{MR681738}.

Let $\rts_M \subset \aaa_M^*$ be the set of reduced roots of $T_M$ on the Lie algebra of $G$,
and for any $P \in\PPP(M)$ let $\rts_P \subset \rts_M$ be the set of reduced roots of $T_M$ on the Lie algebra $\nnn_P$ of $N_P$.
We say that two parabolic subgroups $P,Q\in\PPP(M)$ are adjacent along $\alpha \in \rts_M$,
and write $P|^\alpha Q$, if $\rts_P\cap-\rts_Q=\{\alpha\}$. The study of the operators $M_{Q|P} (\lambda)$ reduces to this case.
Let $\pi\in\Pi_{\disc}(M(\A))$.
We are interested in the corresponding normalizing factors $n_\alpha(\pi,s)$ introduced by Langlands and Arthur
(see \cite[\S6]{MR681738}, \cite{MR999488}). They are meromorphic functions of a complex variable $s$,
and are closely connected to certain automorphic $L$-functions studied by Langlands and Shahidi.
(As the notation suggests, $n_\alpha$ depends only on $\alpha$ and not on the choice of parabolic subgroups $P|^\alpha Q$.
The precise definition of $n_\alpha$ and its relation to the intertwining operators $M_{Q|P} (\lambda)$ is described in [ibid.].
We will not need it here.)

To describe the relevant representations of the $L$-group $\LM$,
let $U_\alpha$ be the unipotent subgroup of $G$ corresponding to $\alpha$
(so that the eigenvalues of $T_M$ on the Lie algebra of $U_\alpha$ are positive integer multiples of $\alpha$).
Let $M_\alpha$ be the group generated by $M$ and $U_{\pm\alpha}$. It is a Levi subgroup of $G$ defined over $F$ containing
$M$ as a co-rank one Levi subgroup.
Let $\prpr{M_\alpha}$ be the subgroup of $M_\alpha\cap G^{\der}$ generated by $U_{\pm\alpha}$.
By \cite[Proposition 4.11]{MR0207712} $\prpr{M_\alpha}$ is a connected normal subgroup of $M_\alpha$ defined over $F$.
Hence $\modM:=M\cap\prpr{M_\alpha}$ is a normal subgroup of $M$.
Moreover, since $M$ has co-rank one in $M_\alpha$, precisely
one simple root $\beta$ of $M_\alpha$ restricts to $\alpha$, which implies that the root system of $\prpr{M_\alpha}$ is the irreducible
component of the root system of $M_\alpha$ containing $\beta$.
The group $\prpr{M_\alpha}$ is therefore $F$-simple, and $\modM$ is a Levi subgroup of co-rank one.
(In particular, $\modM$ is connected.)
Let $\prpr{M_\alpha}^{\SC}$ be the simply connected cover of $\prpr{M_\alpha}$,
and $\widetilde\scprj:\prpr{M_\alpha}^{\SC}\rightarrow \prpr{M_\alpha}$ the natural projection.
By abuse of notation we write $\modM^{\SC}=(\scprj)^{-1}(\modM)$. (Of course $\modM^{\SC}$ is not semisimple.)
We denote by $\scprj:\modM^{\SC}\rightarrow\modM$ the restriction of $\widetilde\scprj$ to $\modM^{\SC}$.
We note that $\scprj(\modM^{\SC}(\A))=\widetilde\scprj(\prpr{M_\alpha}^{\SC}(\A))\cap\modM(\A)$.

The $L$-group $\LM_\alpha$ of $M_\alpha$ admits a parabolic subgroup $\LP_\alpha= \LM\ltimes\LU_\alpha$ corresponding to $\alpha$ whose Levi part $\LM$
is the $L$-group of $M$. The representation of $\LM$ relevant for the theory of intertwining operators is
(the contragredient of) the
adjoint representation of $\LM$ on $\Lie(\LU_\alpha)$.
We have a sequence of homomorphisms of reductive groups
\[
\prpr{M_\alpha}^{\SC}\relbar\joinrel\twoheadrightarrow \prpr{M_\alpha}\lhook\joinrel\longrightarrow M_\alpha,
\]
which restricts to
\[
\modM^{\SC}\relbar\joinrel\twoheadrightarrow\modM\lhook\joinrel\longrightarrow M.
\]
The dual sequences of homomorphisms of $L$-groups are
\[
\LM_\alpha \longrightarrow {^L\prpr{M_\alpha}} \longrightarrow
{^L(\prpr{M_\alpha}^{\SC})} = ({^L\prpr{M_\alpha}})^{\operatorname{ad}}
\]
and
\[
\LM \longrightarrow {^L\modM} \longrightarrow  {^L}(\modM^{\SC}) = {^L\modM} / Z ({^L\prpr{M_\alpha}}),
\]
respectively.
The adjoint action of $\LM$ on $\Lie(\LU_\alpha)$ clearly factors through the composed homomorphism.
We decompose the contragredient of the adjoint representation of $^L\modM$ on $\Lie(\LU_\alpha)$ as
$\oplus_{j=1}^{\unilen}r_j$, where the (irreducible) subrepresentation $r_j$ is the eigenspace of weight $-j$ of the fundamental
weight corresponding to $\alpha$, considered as a cocharacter of the center of $^L\modM / Z ({^L\prpr{M_\alpha}})$.

The normalizing factor $n_\alpha (\pi, s)$ is closely related to the $L$-functions $L^S (j s,\pi,r_j)$, $j=1,\dots,\unilen$, which
emerged in the famous computation by Langlands of the constant term of the corresponding Eisenstein series \cite{MR0419366}.
Langlands used this to show the meromorphic continuation of these $L$-functions to the entire complex plane
(at least in the cuspidal case, but the general case follows from the cuspidal case as in the argument of Lemma \ref{lem: cuspenuf}).
These $L$-functions are also known to have finite order as meromorphic functions.
In the cuspidal case this is \cite[Theorem 2]{MR2230919}, which is based on the results of M\"uller \cite{MR1025165, MR1800065}.
The general case follows once again from the argument of Lemma \ref{lem: cuspenuf}.
However, the finer analytic properties of $L^S (s,\pi,r_j)$,
such as properties (FE+) and (CC) considered in this paper, are more elusive (cf. Remark \ref{rem: Shahidi} below).

We now summarize the pertinent properties of the normalizing factors $n_\alpha(\pi,s)$. The first one is the functional equation
\[
n_\alpha(\pi,s)\overline{n_\alpha(\pi,-\bar s)}=1,
\]
which is equivalent to
\[
\abs{n_\alpha(\pi,\iii t)}=1, \quad \quad t\in\R.
\]
The second is a factorization
\[
n_\alpha(\pi,s)=\prod_v n_{\alpha,v}(\pi,s)
\]
as an absolutely convergent product for $\Re s$ sufficiently large. The local factors $n_{\alpha,v} (\pi,s)$ are assumed to satisfy the following properties.
\begin{enumerate}
\item For all finite $v$, $n_{\alpha,v} (\pi,s)$ is a rational function in $X=q_v^{-s}$, whose degree is bounded in terms
of $G$ only and which is regular and non-zero at $X=0$.
\item If $v\in S_\infty$, then
\[
n_{\alpha,v} (\pi,s)= c_v \prod_{i=1}^{N_v} \frac{\Gamma_\R (j_i s + \alpha_i)}{\Gamma_\R (j_i s + \alpha_i + 1)},
\]
where $c_v \neq 0$, $\alpha_1,\dots,\alpha_{N_v}\in\C$
and the integers $N_v\ge1$ and $j_i\ge1$, $i = 1, \ldots, N_v$, are bounded in terms of $G$ only.
\item \label{item: pullv}
Write $\pi=\otimes\pi_v$.
If $v$ is finite, $\modM$ is unramified at $v$ and the pull-back $\pi_v \circ \scprj$ of $\pi_v$ to $\modM^{\SC}(F_v)$ 
contains an unramified subrepresentation $\sigma_v$, then
\begin{equation} \label{Equationnalphav}
n_{\alpha,v} (\pi,s)=\prod_{j=1}^{\unilen}\frac{L_v (js,\sigma_v,r_j)}{L_v (js+1,\sigma_v,r_j)}.
\end{equation}
\end{enumerate}

\begin{remark}
The first and second property are clear from \cite{MR999488} (and moreover $n_{\alpha,v}(\pi,s)$ depends only on $\pi_v$, or in fact on $\pi_v\circ\scprj$
-- cf. \cite[Theorem 3.3.4]{MR3163476}).
For $v\in S_\infty$, by \cite[\S 3]{MR999488} we have in fact
\[
n_{\alpha,v} (\pi_v,s)=\prod_{j=1}^{\unilen}\frac{L_v(js,r_j\circ\phi_v)}{L_v(js+1,r_j\circ\phi_v)},
\]
where $\phi_v:W_v\rightarrow\LM$ is the Langlands parameter associated to $\pi_v$. However, we will only need the qualitative property stated above.
The third property is implicit in \cite{MR999488}. We omit the details.
\end{remark}

\begin{remark} \label{RemarkLocalnalpha}

We can rewrite the assumption on $\pi_v$ in \eqref{item: pullv} in an equivalent way by saying that
$\pi_v|_{\modM (F_v)}$ contains an irreducible subrepresentation of the form
$\sigma_v \chi_v$, where $\sigma_v$ is an unramified representation of $\modM(F_v)$ and $\chi_v$ is
a character of $\modM(F_v)/\scprj(\modM^{\SC}(F_v))$.
%
\end{remark}

We now introduce the property (TWN+), which is the main object of this paper.

\begin{definition} \label{DefinitionTWN}
The group $G$ satisfies property (TWN+) (tempered winding numbers, strong version) if, for any proper Levi subgroup $M$ of $G$ defined over $F$,
and any root $\alpha \in \rts_M$ we have the estimate
\[
\int_T^{T+1}\abs{n_\alpha'(\pi,\iii t)}\ dt\ll\log(\abs{T}+\param(\pi_\infty;\scprj)+\level(\pi;\scprj))
\]
for all $\pi\in\Pi_{\disc}(M(\A))$ and all real numbers $T$.
\end{definition}

Recall that the invariants $\param(\pi_\infty;\scprj)$ and $\level(\pi;\scprj)$ are defined in \S\ref{sec: relparams}.
(Of course, the ambient group is $M$ in our case.)
As explained in \cite[Remark 5.3, part 2]{MR3352530}, this property implies property (TWN) for $G$ formulated in [ibid., Definition 5.2],
which is relevant to the limit multiplicity problem.

In view of the description of the unramified factors of $n_\alpha (\pi,s)$, it is no wonder that the property (TWN+)
is intimately related to analytic properties of the automorphic $L$-functions $L^S(s,\pi,r_j)$.
In the special cases $G = \GL (n)$ and $G = \SL (n)$ this was spelled out in [ibid., Proposition 5.5].
Here, we will analyze it in the general case.

\begin{definition}
We say that $G$ satisfies property (L), if for any standard Levi subgroup $M$, any $\alpha \in \rts_M$, and any irreducible
constituent $r=r_j$ of the representation $(\Ad_{\LU_\alpha})^\vee$ of the group $^L\modM$, the pair $(\modM,r)$
satisfies properties (FE+) and \CC.
\end{definition}

\begin{remark} \label{rem: onlyderived}
It is clear from the definition that property (L) depends only on the derived group of $G$.
\end{remark}

\begin{remark} \label{rem: r1}
We expect that every group satisfies property (L).
Using \cite[Proposition 3.1]{MR1800349}, it is possible to reduce this question to the constituent $r_1$.
\end{remark}

\begin{remark} \label{rem: Shahidi}
Suppose that $G$ is quasi-split and that $\pi$ is a \emph{generic} cuspidal
representation of $M (\A)$. (If $M$ is isogenous to a product of general linear groups, then this condition is satisfied for all cuspidal representations
$\pi$.) By the results of Shahidi, the meromorphic function $L^{S(\pi)}(s,\pi,r)$ satisfies then a functional equation of the form \eqref{eq: FE}.
Moreover, local factors $\gamma_v^{\Sh}(s,\pi_v,r)$ are defined and satisfy \eqref{eq: global FE}, \GF\ and \eqref{eq: comparch} \cite{MR1070599}.
Shahidi's work also gives that under some mild assumptions which are known in almost all cases (see \cite{MR2767519} and the references therein),
these $L$-functions admit finitely many poles and are of order one (see \cite{MR1800349}, with some complements in \cite{MR2230919}).
However, the poles are controlled by those of the corresponding Eisenstein series in the right-half plane,
and in general it is not clear how to bound the number of the latter in terms of $G$ only.
(This is known in several cases and is expected to hold in general.)
Therefore, even granted the reduction to the generic case, without additional input we cannot conclude from Shahidi's work by itself that $r$ has property (FE+).

In addition, it is not clear how to approach property \eqref{eq: locndbnd} using the Langlands-Shahidi method (although it is certainly not excluded
that this is possible).

In the cases at hand we will supplement the information from Shahidi's work by using integral representations of Rankin--Selberg type,
which give better control of the poles of (at least) the partial $L$-functions as well as of the local $\gamma$-factors.
\end{remark}

We now have the following implication.

\begin{proposition} \label{prop: ABCimpliesTWN}
Suppose that $G$ satisfies property (L). Then it satisfies property (TWN+).
\end{proposition}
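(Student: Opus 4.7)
The plan is to bound the $L^1$ norm of $n_\alpha'(\pi,\iii t)$ by reducing, via Lemma~\ref{lem: restH}, to the analogous bounds for the quotients $m(js,\sigma,r_j)$ supplied by Proposition~\ref{lem: logderbnd}, applied to a suitable auxiliary representation $\sigma$ of $\modM$. Fix a proper Levi subgroup $M$ of $G$, a root $\alpha\in\rts_M$, and $\pi\in\Pi_{\disc}(M(\A))$. Since $\modM$ is a connected normal subgroup of $M$, Lemma~\ref{lem: restH} applied to the finite covering $\scprj:\modM^{\SC}\to\modM$ inside $M$ produces $\sigma\in\Pi_{\disc}(\modM(\A))$ and a character $\chi$ of $\modM(\A)$ trivial on $\modM(F)\scprj(\modM^{\SC}(\A))$ such that $\sigma\chi$ is a subrepresentation of $\pi\rest_{\modM(\A)}$, with $\level(\sigma)\ll\level(\pi;\scprj)$ and $\param^{\modM}(\sigma_\infty)=\param(\pi_\infty;\scprj)$.

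For each $j=1,\dots,\unilen$, property (L) provides (FE+) and (CC) for the pair $(\modM,r_j)$. Applying Proposition~\ref{lem: logderbnd} to $(\modM,\sigma,r_j)$ and invoking (CC) together with \eqref{eq: levelSpi} (to absorb $\sum_{p\in S_{\Q,f}(\sigma)}\log p$ into $\log\level(\sigma)$ up to an additive constant) gives
\[
\int_T^{T+1}\abs{m'(\iii t,\sigma,r_j)}\,dt\ll\log\bigl(\abs{T}+\param(\pi_\infty;\scprj)+\level(\pi;\scprj)\bigr)
\]
uniformly in $j$ and $\pi$, with implicit constant depending only on $G$.

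I next compare $n_\alpha(\pi,s)$ with $F(s):=\prod_{j=1}^{\unilen}m(js,\sigma,r_j)$. Since $\chi$ is globally trivial on $\scprj(\modM^{\SC}(\A))$, each local component $\chi_v$ is trivial on $\scprj(\modM^{\SC}(F_v))$; consequently, for every finite place $v$ outside the finite set $S:=S(\sigma)\cup S_0$, where $S_0$ depends only on $G$ and contains $S_\infty$ together with the places of ramification of $\modM$, the pull-back $\pi_v\circ\scprj$ contains the unramified subrepresentation $\sigma_v\circ\scprj$. Property~\eqref{item: pullv} of the normalizing factors then gives
\[
n_{\alpha,v}(\pi,s)=\prod_{j=1}^{\unilen}\frac{L_v(js,\sigma_v,r_j)}{L_v(js+1,\sigma_v,r_j)},
\]
where we use that $r_j$ factors through $^L(\modM^{\SC})$ to identify the local L-factors. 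Multiplying over $v\notin S$ and comparing with the reduced completed $L$-functions yields a factorisation $n_\alpha(\pi,s)=C(s)\,F(s)$, where $C(s)=\prod_{v\in S}C_v(s)$: each finite-place $C_v$ is a rational function in $q_v^{-s}$ of degree bounded only in terms of $G$ (from the first property of $n_{\alpha,v}$ together with the bounded degree of $L_v^{\red}$ supplied by (FE+)), and each archimedean $C_v$ is a ratio of $\Gamma_\R$-factors of similarly bounded degree, with parameters controlled polynomially by $\param(\pi_\infty;\scprj)$ via (CC) for $(\modM,r_j)$ applied to $\sigma$ and the description of the archimedean $n_{\alpha,v}$ through the Langlands parameter of $\pi_v$.

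Finally, since $\abs{n_\alpha(\pi,\iii t)}=\abs{F(\iii t)}=1$, taking logarithmic derivatives yields
\[
\int_T^{T+1}\abs{n_\alpha'(\pi,\iii t)}\,dt\le\sum_{j=1}^{\unilen}j\int_T^{T+1}\abs{m'(j\iii t,\sigma,r_j)}\,dt+\int_T^{T+1}\abs{C'(\iii t)/C(\iii t)}\,dt.
\]
The first sum is controlled by the previous step (the factors $j$ and the change of variables $u=jt$ are absorbed into the constant since $\unilen$ depends only on $G$). For the correction, the $L^1$ norm of the log-derivative of each rational $C_v$ over $[T,T+1]$ is $\ll\log q_v$, and by \eqref{eq: levelSpi} the sum over $v\in S\setminus S_\infty$ is $\ll\log\level(\pi;\scprj)$. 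The archimedean contribution is $\ll\log(\abs{T}+\param(\pi_\infty;\scprj))$ via the standard Stirling-type estimate on $\Gamma_\R'/\Gamma_\R$. Summing all contributions yields (TWN+). The main technical subtlety lies in the careful bookkeeping of the local corrections at the finite bad set $S$ and at the archimedean places — in particular, in verifying the uniformly bounded degrees of the $C_v$ and the polynomial control of their archimedean parameters by $\param(\pi_\infty;\scprj)$.
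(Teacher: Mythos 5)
Your proposal follows essentially the same route as the paper: reduce to a representation $\sigma$ of $\modM$ via Lemma~\ref{lem: restH}, apply Proposition~\ref{lem: logderbnd} together with (CC) to the auxiliary function $F(s)=\prod_j m(js,\sigma,r_j)$ (which is exactly the paper's $m(\sigma,s)$), match $n_\alpha$ with $F$ at unramified places using property~\eqref{item: pullv} and Remark~\ref{RemarkLocalnalpha}, and control the finite correction factor $C(s)=\phi(s)^{-1}$ by local estimates. The overall skeleton and the main reductions are identical, and the finite-place estimates you state ($L^1$ log-derivative of a rational function in $q_v^{-s}$ of bounded degree is $\ll\log q_v$, then sum over $S_f(\sigma)$ using \eqref{eq: levelSpi}) are the content of the first part of the paper's Lemma~\ref{lem: estimfctrs}.

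The one place where you diverge from the paper's argument is the archimedean correction. You propose to bound the log-derivative of each archimedean factor by something like $\log(\abs{T}+\param(\pi_\infty;\scprj))$ by first controlling the Gamma-parameters of $n_{\alpha,v}$ polynomially in $\param(\pi_\infty;\scprj)$ via the Langlands parameter of $\pi_v$. That would require spelling out a compatibility between $n_{\alpha,v}$ and the projected infinitesimal character $\chi_{\pi_\infty;\scprj}$, which is true but not free (the paper even remarks that it will \emph{not} use the archimedean Langlands-parameter description of $n_{\alpha,v}$, only the qualitative shape). The paper avoids this entirely: the second part of Lemma~\ref{lem: estimfctrs} shows that for $f(s)=\Gamma_\R(s+\alpha)/\Gamma_\R(s+\alpha+1)$ one has $\int_T^{T+1}\abs{\Re(f'/f)(\iii t)}\,dt\ll 1$ \emph{uniformly in $\alpha$ and $T$}, so every archimedean factor of $\phi$ --- both the ratios $L^{\red}_\infty(js,\sigma,r_j)/L^{\red}_\infty(js+1,\sigma,r_j)$ and the factors $n_{\alpha,v}(\pi,s)^{-1}$ for $v\mid\infty$, which all have precisely this shape --- contributes $O(1)$, with no dependence on the parameters at all. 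Your weaker bound is still sufficient for the statement, but as written the proposal leaves a genuine (if fillable) gap: you neither prove the uniform $O(1)$ estimate nor carry out the parameter bound you would need instead. Everything else is correct and in line with the paper.
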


This result essentially follows from Proposition \ref{lem: logderbnd}.
We first need a simple lemma to account for the ramified local factors.

\begin{lemma} \label{lem: estimfctrs}
Let $P$ be a polynomial of degree $d$ and let $f(s)=P(p^{-s})$. Then
\[
\int_T^{T+1}\abs{\Re\frac{f'(\iii t)}{f(\iii t)}}\ dt\le d(\pi+\log p)
\]
for all $T\in\R$.
Similarly let $f(s)=\frac{\Gamma_{\R}(s+\alpha)}{\Gamma_{\R}(s+1+\alpha)}$ for some $\alpha\in\C$. Then
\[
\int_T^{T+1}\abs{\Re\frac{f'(\iii t)}{f(\iii t)}}\ dt\ll1
\]
for all $T\in\R$.
\end{lemma}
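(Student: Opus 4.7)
My plan for the first estimate is to reduce to a Poisson-kernel bound via the factorization of $P$. Writing $P(X) = a X^m \prod_{i=1}^{d-m}(X - x_i)$ with all $x_i \ne 0$, one computes
\[
\frac{f'(s)}{f(s)} = -m\log p - \log p \sum_{i=1}^{d-m}\frac{1}{1 - x_i p^s}.
\]
After the triangle inequality and the substitution $u = t\log p$, which sends $[T, T+1]$ to a $u$-interval of length $L := \log p$, it suffices to prove the single-factor estimate
\[
\int_U^{U+L}\left|\Re \frac{1}{1 - x e^{iu}}\right|\, du \le L + \pi
\]
for every nonzero $x \in \C$. For $r := |x| < 1$, the elementary identity $\Re (1 - re^{i\theta})^{-1} = \tfrac{1}{2} + \tfrac{1}{2}P_r(\theta)$, where $P_r(\theta) = (1 - r^2)/(1 + r^2 - 2r\cos\theta)$ is the Poisson kernel, reduces this to bounding $\int P_r$; and since $P_r \ge 0$ is $2\pi$-periodic with total period-mass $2\pi$, splitting into complete and residual periods yields $\int_U^{U+L} P_r\, du \le L + 2\pi$, so the single-factor bound $L + \pi$ follows.

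The case $|x| > 1$ is reduced to the previous case via $y = 1/x$ and the algebraic identity $(1 - xe^{iu})^{-1} = 1 - (1 - ye^{-iu})^{-1}$, which yields $\Re(1 - xe^{iu})^{-1} = \tfrac{1}{2} - \tfrac{1}{2}P_{|y|}(\cdot)$; combining with $|\tfrac{1}{2} - \tfrac{P}{2}| \le \tfrac{1}{2} + \tfrac{P}{2}$ gives the same $L + \pi$ bound. The boundary case $|x| = 1$ is immediate: $\Re(1 - xe^{iu})^{-1} = 1/2$ almost everywhere, contributing $L/2$. Summing these estimates over the $d - m$ nonzero roots and adding the contribution $m \log p$ from the monomial factor $X^m$ gives
\[
\int_T^{T+1}\left|\Re \frac{f'(it)}{f(it)}\right| dt \le m\log p + (d-m)(L + \pi) \le d(\pi + \log p).
\]

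For the second part, using $\Gamma_\R(s) = \pi^{-s/2}\mathbf{\Gamma}(s/2)$ one obtains
\[
\frac{f'(s)}{f(s)} = \tfrac{1}{2}\bigl[\psi_{\mathrm{dig}}(\tfrac{s+\alpha}{2}) - \psi_{\mathrm{dig}}(\tfrac{s+\alpha+1}{2})\bigr],
\]
where $\psi_{\mathrm{dig}} = \mathbf{\Gamma}'/\mathbf{\Gamma}$ is the digamma function. The standard asymptotic $\psi_{\mathrm{dig}}(z + \tfrac{1}{2}) - \psi_{\mathrm{dig}}(z) = \tfrac{1}{2z} + O(1/|z|^2)$, valid uniformly away from the negative real axis, gives $f'(it)/f(it) = O(1/|t|)$ as $|t| \to \infty$. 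On any bounded range of $t$, $f$ has at most finitely many poles or zeros on the imaginary axis (which occur only for special $\alpha$); near any such simple point $it_0$, the leading term of $f'(it)/f(it)$ is of the form $c/(i(t - t_0))$ for some $c \in \C$, whose real part vanishes. Hence $|\Re f'(it)/f(it)|$ is uniformly bounded on $\R$ by a constant depending only on $\alpha$, which gives the $O(1)$ estimate for the integral.

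The main subtlety in the first part is uniformity as $|x| \to 1$: the real part $\Re (1 - xe^{iu})^{-1}$ develops large positive or negative spikes in this regime, but the Poisson-kernel identity presents these spikes as the non-negative quantity $P_r/2$ whose integral over any interval is controlled by $L + 2\pi$, which is precisely what makes the constant $\pi$ (rather than a larger multiple) achievable in the per-factor bound.
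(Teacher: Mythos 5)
Your treatment of the polynomial case is correct and is essentially the paper's argument in slightly different clothing: the paper reduces to a single linear factor $P(z)=1-\alpha z$ and, after absorbing the phase, computes an explicit $\arctan$ antiderivative of $\abs{\Re(1-\alpha p^{-\iii t})^{-1}-\tfrac12}$, then uses periodicity plus the $\pi$-oscillation of $\arctan$ per period. Your Poisson-kernel identity $\Re(1-re^{\iii u})^{-1}=\tfrac12+\tfrac12 P_r(u)$ together with the period mass $\int_0^{2\pi}P_r=2\pi$ is an equivalent way of organizing the same facts (positivity of the shifted quantity and $2\pi$-periodicity), and both routes lead to the per-factor bound $\log p+\pi$. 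So no issue there.

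The second part, however, has a genuine gap. You argue that $\abs{\Re f'(\iii t)/f(\iii t)}$ is uniformly bounded on $\R$ ``by a constant depending only on $\alpha$,'' and conclude the $O(1)$ estimate. But the constant in the lemma must be \emph{absolute}, independent of $\alpha$: in the proof of Proposition \ref{prop: ABCimpliesTWN} the lemma is applied to the archimedean factors $\Gamma_\R(j_i s+\alpha_i)/\Gamma_\R(j_i s+\alpha_i+1)$ with the $\alpha_i$ ranging over all of $\C$ as $\pi$ varies, while only the number of such factors is bounded in terms of $G$. And in fact the pointwise bound you invoke is \emph{not} uniform in $\alpha$. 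Take $\alpha=-\epsilon$ with $0<\epsilon\ll1$: then $f$ has a pole at $s=\epsilon$, hence $f'(s)/f(s)=-\frac{1}{s-\epsilon}+O(1)$ near there, and
\[
\Re\frac{f'(\iii t)}{f(\iii t)} \;=\; \frac{\epsilon}{\epsilon^2+t^2}+O(1),
\]
which at $t=0$ is $\approx 1/\epsilon$. So $\sup_{t}\abs{\Re f'(\iii t)/f(\iii t)}$ blows up as $\alpha$ approaches the imaginary axis (or more generally as $\alpha$ approaches $-2\Z^{\ge0}$ or $-1-2\Z^{\ge0}$), and your observation that the real part of $c/(\iii(t-t_0))$ vanishes does not help, since here the nearby pole lies \emph{off} the imaginary axis, not on it. What does hold uniformly is the $L^1$-bound $\int_T^{T+1}\frac{\epsilon}{\epsilon^2+t^2}\,dt<\pi$, and this is precisely the step the paper's proof supplies: it isolates the one potentially close term $A=\pm(\delta+\iii t)^{-1}$ from the digamma series, shows the remainder $B$ is bounded by an absolute constant, and bounds $\int_T^{T+1}\abs{\Re A}\,dt$ by $\pi$. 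Your argument needs this $L^1$ step; a pointwise supremum cannot give a constant uniform in $\alpha$.

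Incidentally, the asymptotic $\psi(z+\tfrac12)-\psi(z)=\tfrac{1}{2z}+O(\abs{z}^{-2})$ you quote fails to be uniform as $z$ approaches $\R_{\le0}$, which is exactly the regime ($\alpha$ with large negative real part and small imaginary part, $t$ near $-\Im\alpha$) that produces the non-uniformity above, so the ``bounded range of $t$'' case cannot be dispatched by compactness-for-fixed-$\alpha$ either.
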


\begin{proof}
For the first part, it is enough to consider the case $P(z)=1-\alpha z$ with $\alpha\in\C$.
(The left-hand side vanishes for $P(z)=z$.)
Absorbing the argument of $\alpha$ into $T$, we may also assume without loss of generality that $\alpha\in\R$.
Note that then
\begin{eqnarray*}
\log p\int\abs{\Re\frac{1}{1-\alpha p^{-\iii t}}-\frac12}\ dt & = &
\frac12\log p\int \frac{\abs{1-\alpha^2}}{1+\alpha^2-2\alpha\cos(t\log p)}\ dt\\ & = &
\arctan(\abs{\frac{1+\alpha}{1-\alpha}}\tan(\frac{t\log p}2))+C,
\end{eqnarray*}
and hence, since the integrand is periodic with period $2\pi/\log p$,
\[
\log p\int_T^{T+1}\abs{\Re\frac1{1-\alpha p^{-\iii t}}-\frac12}\ dt\le \pi+\frac12\log p.
\]
Thus,
\[
\int_T^{T+1}\abs{\Re\frac{f'(\iii t)}{f(\iii t)}}\ dt=
\log p\int_T^{T+1}\abs{\Re\frac{\alpha p^{-\iii t}}{1-\alpha p^{-\iii t}}}\ dt\le\pi+\log p.
\]

For the second part we can assume once again that $\alpha\in\R$. Note that
\[
\frac{\Gamma_{\R}'(z)}{\Gamma_{\R}(z)}-\frac{\Gamma_{\R}'(z+1)}{\Gamma_{\R}(z+1)}=
\sum_{k=0}^\infty\big(\frac1{z+2k+1}-\frac1{z+2k}\big)=
\sum_{k=0}^\infty\frac1{(z+2k)(z+2k+1)}.
\]
Thus,
\[
\frac{\Gamma_{\R}'(\alpha+\iii t)}{\Gamma_{\R}(\alpha+\iii t)}-\frac{\Gamma_{\R}'(\alpha+\iii t+1)}{\Gamma_{\R}(\alpha+\iii t+1)}=
A+B,
\]
where upon writing $\alpha=n+\delta$ with $n\in\Z$ and $-\frac12<\delta\le\frac12$, we have
\[
A=\begin{cases}\frac{(-1)^{n+1}}{\delta+\iii t}&\text{ if }\alpha<\frac12,\\
0&\text{otherwise,}\end{cases}
\]
and
\begin{eqnarray*}
\abs{B} & \le & \frac1{\abs{\delta+\iii t+(-1)^n}}+\sum_{k \ge 0, \, k\ne\lfloor\frac{-n}2\rfloor}\frac1{\abs{\alpha+\iii t+2k}\abs{\alpha+\iii t+2k+1}}\\ & \le &
\frac1{\abs{\delta+(-1)^n}}+\sum_{k\ne\lfloor\frac{-n}2\rfloor}\frac1{\abs{\alpha+2k}\abs{\alpha+2k+1}}\ll1.
\end{eqnarray*}
It remains to note that
\[
\int_T^{T+1}\abs{\Re\frac1{\delta+\iii t}}\ dt=\abs{\delta}\int_T^{T+1}\frac{dt}{\delta^2+t^2}<\pi. \qedhere
\]
\end{proof}

\begin{proof}[Proof of Proposition \ref{prop: ABCimpliesTWN}]
Assume that $G$ satisfies property (L), and let $M$, $\alpha$, and $\pi\in\Pi_{\disc}(M(\A))$ be given.
We apply Lemma \ref{lem: restH} with respect to $\scprj:\modM^{\SC}\rightarrow\modM\triangleleft M$.
Let $\sigma\in\Pi_{\disc}(\modM(\A))$ and $\chi$ be as in that lemma.
Then by Remark \ref{RemarkLocalnalpha} we have
\begin{equation} \label{Comparenalpha}
\prod_{v \notin S (\sigma)} n_{\alpha,v} (\pi, s) =\prod_{j=1}^{\unilen}\frac{L^{S (\sigma)}(js,\sigma,r_j)}{L^{S (\sigma)}(js+1,\sigma,r_j)}.
\end{equation}
As in \S\ref{sec: redLfunction} define
\begin{equation} \label{Comparemsigma}
m(\sigma,s)=\prod_{j=1}^{\unilen}\frac{\epsilon^{\red}(1,\sigma,r_j)L^{\red}(js,\sigma,r_j)}{L^{\red}(js+1,\sigma,r_j)}.
\end{equation}
By Proposition \ref{lem: logderbnd} we have $\abs{m(\sigma,\iii t)}=1$ for all $t\in\R$, and
\[
\int_T^{T+1}\abs{m'(\sigma,\iii t)}\ dt\ll \log (\abs{T}+2) +
\sum_j \left[ \log\arithcond(\sigma,r_j)+ \log \infcond_\infty(\sigma,r_j) \right]
+ \log \level (\sigma).
\]
By condition (CC) for $(\modM, r_j)$ and Lemma \ref{lem: restH}, for all $j$ we have here
\[
\log \arithcond(\sigma,r_j) \ll \log \level (\sigma) + 1\ll \log\level(\pi;\scprj) + 1,
\]
while
\[
\log \infcond_\infty(\sigma,r_j) \ll \log\param^{\modM}(\sigma_\infty) + 1=\log\param(\pi_\infty;\scprj) + 1.
\]

It remains to compare $m(\sigma,s)$ and $n_\alpha (\pi,s)$.
Consider the quotient $\phi(s)=m(\sigma,s)/n_\alpha (\pi,s)$. We have $\abs{\phi(\iii t)}=1$, and therefore
$\phi'(\iii t)/\phi(\iii t)\in\R$ for $t\in\R$.
On the other hand, from \eqref{Comparenalpha} and \eqref{Comparemsigma} we get that
\[
\phi(s)= c \prod_{j=1}^{\unilen}\prod_{p\in S_{\Q} (\sigma)}\frac{L_p^{\red}(js,\sigma,r_j)}{L_p^{\red}(js+1,\sigma,r_j)}
\prod_{v\in S(\sigma)} n_{\alpha,v}(\pi,s)^{-1}
\]
for some non-zero constant $c$.
By the nature of the local factors $L_p^{\red}(js,\sigma,r_j)$ and $n_{\alpha,v}(\pi,s)$, we may conclude from Lemma \ref{lem: estimfctrs}
and \eqref{eq: levelSpi} that
\begin{eqnarray*}
\int_T^{T+1}\abs{\phi'(\iii t)}\ dt=\int_T^{T+1}\abs{\Re\frac{\phi'(\iii t)}{\phi(\iii t)}}\ dt & \ll_G & [F:\Q]+\sum_{v\in S_f(\sigma)}\log q_v \\
& \ll & \log\level(\pi;\scprj) + 1.
\end{eqnarray*}
This concludes the proof of the proposition.
\end{proof}

\begin{remark} \label{rem: wABC}
For Proposition \ref{prop: ABCimpliesTWN} to hold, we may replace the condition (FE+) in the definition of property (L)
by the weaker condition that (FE+) holds virtually (cf. Remark \ref{rem: weaknice is enough}).
It would be interesting to know whether one can further weaken the assumptions in Proposition \ref{prop: ABCimpliesTWN}.
\end{remark}

In the next sections we will prove:

\begin{theorem} \label{thm: ABC}
The following groups satisfy property (L), and hence also property (TWN+).
\begin{enumerate}
\item $\GL (n)$ and its inner forms.
\item Quasi-split classical groups.
\item The exceptional group $G_2$.
\end{enumerate}
Thus, by Remark \ref{rem: onlyderived}, the same holds for any group whose derived group coincides with the derived group of any one of the groups above.
\end{theorem}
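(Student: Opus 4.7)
The plan is to prove property (L) for each listed group, whereupon property (TWN+) follows from Proposition \ref{prop: ABCimpliesTWN}. By Lemma \ref{lem: cuspenuf}, it suffices to verify (FE+) and (CC) for triplets $(\modM,\pi,r_j)$ with $\pi\in\Pi_{\cusp}(\modM(\A))$ and $r_j$ an irreducible constituent of $(\Ad_{\LU_\alpha})^\vee$ viewed as a representation of $^L\modM$. The common strategy across all three cases is to use functoriality to transfer $\pi$ to an isobaric automorphic representation of a general linear group, at which point $L^S(s,\pi,r_j)$ becomes (up to a controllable discrepancy) one of a short list of standard $L$-functions for $\GL(n)$ --- Rankin-Selberg, twisted symmetric or exterior square, Asai, or symmetric cube --- whose analytic properties are known classically. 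Corollary \ref{cor: enufsomegamma} and Lemma \ref{lem: norsgr} then assemble the local $\gamma$-factors and the functorial transfer into (FE+) and (CC) for the original pair $(\modM,r_j)$.

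For the inner forms of $\GL(n)$, I would invoke the global Jacquet-Langlands correspondence of Badulescu and Badulescu-Renard \cite{MR2390289,MR2684298} to transport a cuspidal representation to a discrete automorphic representation on $\GL(n,\A)$. The constituents $r_j$ in this setting are tensor products of standard representations of $\GL$-factors, so the relevant $L$-functions are Rankin-Selberg $L$-functions $L^S(s,\pi_1\times\pi_2^\vee)$; (FE+) then follows from Jacquet-Piatetski-Shapiro-Shalika, and (CC) from the standard conductor bounds combined with the fact that the Jacquet-Langlands correspondence preserves conductors at finite places. For the quasi-split classical groups I would use Arthur's endoscopic classification \cite{MR3135650} together with Mok's extension \cite{MR3338302} to unitary groups. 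Each constituent $r_j$ then pulls back to a twisted symmetric or exterior square $L$-function for $\GL(n)$, an Asai $L$-function for $\Res_{E/F}\GL(n)$, or a Rankin-Selberg factor; for the first two I plan to combine the Langlands-Shahidi theory of Shahidi and Kim (giving the functional equation, finite-order meromorphic continuation, and the archimedean compatibility \eqref{eq: comparch}) with the integral representations of Bump-Ginzburg and Jacquet-Shalika, which give control of the poles and yield the bound \eqref{eq: locndbnd} at ramified places. The Asai case is treated analogously via Flicker-Kable. For $G_2$ the only additional input is the twisted symmetric cube $L$-function for $\GL(2)$, handled by Kim-Shahidi.

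The step I expect to be the main obstacle is the verification of the local conductor bound \eqref{eq: locndbnd} at bad finite places: functoriality cleanly controls the unramified Euler factors, and the archimedean bound \eqref{eq: archbnd} follows from \eqref{eq: comparch}, but at ramified places one must reconcile the $\gamma$-factors coming from the Langlands-Shahidi method with those arising from the integral representations (or from the local Langlands correspondence on the $\GL(n)$-side) sharply enough to conclude that $\expcond_v^{\gnr}(\pi,r)\ll 1+\log_{q_v}\level_v(\pi_v)$. A secondary difficulty, already flagged in Remark \ref{rem: Shahidi}, is to bound the number of poles of $L^{S(\pi)}(s,\pi,r_j)$ uniformly in $\pi$: the Langlands-Shahidi framework alone ties these poles to those of the corresponding Eisenstein series, and one must use the integral-representation side (together with the isobaric structure of the functorial transfer) to close the gap. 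These compatibilities at bad places, rather than any new global input, are where I expect the technical content of Sections \ref{sec: goodLexamples}, \ref{SectionClassical} and \ref{SectionExceptional} to lie.
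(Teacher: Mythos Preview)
Your overall strategy --- reduce to known $L$-functions on $\GL(n)$ via functoriality, then apply Corollary~\ref{cor: enufsomegamma} --- is exactly the paper's approach, and you have correctly identified that the conductor bound \eqref{eq: locndbnd} at bad places is where the work lies. However, two of your claimed mechanisms are off in ways that matter.

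First, for inner forms of $\GL(n)$: the Jacquet--Langlands correspondence does \emph{not} preserve conductors at finite places. The paper explicitly notes (equation \eqref{eq: conddifference}) that $\JL(\pi)_v$ need not even depend only on $\pi_v$, and one only has $\abs{\cond(\pi_v)-\cond(\JL(\pi)_v)}\le n$. This bounded discrepancy is harmless, but you should not claim preservation. The actual conductor bound comes from Lemma~\ref{lem: uprcondva} (a direct Godement--Jacquet argument) combined with the Bushnell--Henniart pair bound \eqref{eq: BH}.

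Second, and more substantively, for the classical-group cases your proposed mechanism for \eqref{eq: locndbnd} is not the one that works. You say the integral representations of Bump--Ginzburg, Jacquet--Shalika, and Flicker ``yield the bound \eqref{eq: locndbnd} at ramified places.'' In the paper those integral representations are used \emph{only} for controlling the number of poles (Lemma~5.2); they give no direct handle on conductor exponents. The conductor bounds are obtained by two different tricks. For $\Sym^2$, $\wedge^2$, and $\Asai^\pm$ one uses multiplicativity of Shahidi's $\epsilon$-factors to write $\expcond^{\Sh}(\pi,\Sym^2)+\expcond^{\Sh}(\pi,\wedge^2)=\expcond^{\JPSS}(\pi\times\pi)$ (and the analogous Asai identity), together with Henniart's non-negativity result \cite{MR2595008}; this reduces everything to \eqref{eq: BH}. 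For the pair $G'\times\GL(n)$ with $G'$ classical, the route is different again: one needs the Piatetski-Shapiro--Rallis doubling $\gamma$-factors $\gamma^{\PSR}_v$, their \emph{stability} under highly ramified twists (Rallis--Soudry \cite{MR2195117}, Brenner \cite{MR2406494}), and the comparison $\gamma^{\PSR}_v(s,\pi_v\times\chi_v)=\gamma^{\GJ}_v(s,\Ar(\pi)_v\times\chi_v)$ obtained by globalizing. The stability result is what converts knowledge of $\level_v(\pi_v)$ into a bound on $\cond(\Ar(\pi)_v)$ (equation \eqref{eq: laosi}). You do not mention the doubling method or stability at all, and without them the conductor bound for the $G'\times\GL(n)$ case has a genuine gap.
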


The proof is based on a case-by-case analysis of the $L$-functions appearing in the definition of property (L).
The quasi-splitness assumption in part 3 comes from the fact that we use functoriality to $\GL (n)$, which at the moment is only available in this case.
(See Remark \ref{rem: extend to non-qs} below.)
In each case we will use Corollary \ref{cor: enufsomegamma} for appropriately defined local factors.
We will also often use Lemma \ref{lem: cuspenuf} to reduce to the cuspidal case.

\section{Inner forms of $\GL(n)$} \label{sec: goodLexamples}

We first consider the groups $\GL(n)$ and their inner forms.
In order to prove Theorem \ref{thm: ABC} in this case, in view of Lemma \ref{lem: norsgr} it suffices to show the following

\begin{theorem} \label{thm: goodLfunctinos}
Let $G=G_1\times G_2$, where $G_i$ is an inner form over $F$ of $\GL ({n_i})$, $i=1,2$, and $r=\Std_{n_1}\otimes\Std_{n_2}$
where $\Std_n$ is the standard $n$-dimensional representation of $\GL(n,\C)$.
Then the pair $(G,r)$ satisfies properties (FE+) and \CC.
\end{theorem}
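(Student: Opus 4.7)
The plan is to use the Jacquet--Langlands correspondence to reduce this theorem to the well-understood analytic theory of Rankin--Selberg $L$-functions for $\GL(n_1)\times\GL(n_2)$.

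First, I would invoke Lemma \ref{lem: cuspenuf} to reduce to the cuspidal case. Every Levi subgroup of $G=G_1\times G_2$ is again a product of inner forms of general linear groups, and the restriction of $\Std_{n_1}\otimes\Std_{n_2}$ to any such Levi decomposes as a direct sum of tensor products of standard representations of its factors. Hence it suffices to prove (FE+) and \CC\ under the assumption $\pi=\pi_1\otimes\pi_2$ with $\pi_i\in\Pi_{\cusp}(G_i(\A))$. I would then apply the global Jacquet--Langlands correspondence of Badulescu--Renard \cite{MR2390289, MR2684298} to obtain a discrete automorphic representation $\Pi_i=\JL(\pi_i)$ of $\GL(n_i,\A)$. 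Let $S_0$ be the finite set of places where $G_1$ or $G_2$ is non-split; for $v\notin S_0$ we have $\Pi_{i,v}\cong\pi_{i,v}$, and consequently
\[
L^{S(\pi)}(s,\pi,r)=L^{S(\pi)}(s,\Pi_1\times\Pi_2),
\]
so that the analytic problem is reduced to Rankin--Selberg on $\GL(n_1,\A)\times\GL(n_2,\A)$.

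Next, I would define the local factors $\gamma_v^{\gnr}(s,\pi,r):=\gamma_v(s,\Pi_{1,v}\times\Pi_{2,v},\psi_v)$ at every place to be the local Jacquet--Piatetski-Shapiro--Shalika (equivalently Shahidi) Rankin--Selberg factors, and then check the hypotheses of Corollary \ref{cor: enufsomegamma}. The global functional equation \eqref{eq: global FE} is the classical Rankin--Selberg functional equation. The local conditions \GF\ follow from the rationality of $\gamma_v$ in $q_v^{-s}$ at finite places combined with the Jacquet--Shalika bounds on unramified Satake parameters, together with the compatibility of the archimedean Rankin--Selberg factors with the local Langlands correspondence (cf.\ Remark \ref{rem: archcc}). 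Condition (1) of Corollary \ref{cor: enufsomegamma} follows from the classical fact that $L^S(s,\Pi_1\times\Pi_2)$ has at most finitely many poles, with multiplicity bounded in terms of $(n_1,n_2)$ only. The archimedean part \eqref{eq: archbnd} of \CC\ is immediate from the same compatibility with the Langlands classification.

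The main obstacle is the verification of the non-archimedean condition (CC') at places $v\in S_0$, where one needs to bound $\expcond_v^{\gnr}(\pi,r)$ linearly in $\log_{q_v}\level_v(\pi_v)$ uniformly in $\pi$. At such places, the conductor of $\Pi_{i,v}$ produced by local Jacquet--Langlands is controlled linearly by $\level_v(\pi_{i,v})$ (with implicit constants depending only on the local inner form of $G$), so that inserting this into the standard Rankin--Selberg conductor formula for $\Pi_{1,v}\times\Pi_{2,v}$ yields (CC') at $v\in S_0$; combined with the trivial case $v\notin S_0$, where the formula applies directly, this gives (CC') in full. Applying Corollary \ref{cor: enufsomegamma} then yields properties (FE+) and \CC\ for $(G,r)$, completing the proof.
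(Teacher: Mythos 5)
Your overall strategy — reduce to cuspidal, transfer via Jacquet--Langlands to $\GL(n_1)\times\GL(n_2)$, and verify the hypotheses of Corollary \ref{cor: enufsomegamma} using the Rankin--Selberg factors — is exactly the route taken in the paper. Most of your verification is on target. However, there is a genuine gap at the step you identify as ``the main obstacle,'' and it is precisely the part that carries the real content of the argument.

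You assert that ``the conductor of $\Pi_{i,v}$ produced by local Jacquet--Langlands is controlled linearly by $\level_v(\pi_{i,v})$'' as if this were a known or routine fact. It is not, and the way you phrase it is also misleading: $\JL(\pi_i)_v$ is \emph{not} a local invariant of $\pi_{i,v}$ (the global transfer at a ramified place depends on the full automorphic representation, and in general $\cond(\pi_{i,v})\neq\cond(\JL(\pi_i)_v)$). The paper handles this in two separate steps, both of which require proof. First, one needs a conductor-versus-depth bound for irreducible representations of $\GL(m,D)$ over a local field; this is Lemma \ref{lem: uprcondva}, which the paper proves by a Godement--Jacquet zeta-integral argument, and it is a delicate point --- the footnote there records that an incorrect version of this bound had appeared in \cite{MR3352530}. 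Second, one needs to pass from $\cond(\pi_{i,v})$ to $\cond(\JL(\pi_i)_v)$; this is done not by any local description of the transfer but by the equality of Godement--Jacquet $\gamma$-factors \eqref{eq: JLgamma}, which yields the bound $\abs{\cond(\pi_{i,v})-\cond(\JL(\pi_i)_v)}\le n_i$ of \eqref{eq: conddifference}. Only then, feeding these into the Bushnell--Henniart bound $\cond(\Pi_{1,v}\times\Pi_{2,v})\le n_2\cond(\Pi_{1,v})+n_1\cond(\Pi_{2,v})$ of \eqref{eq: BH}, does one get (CC'). Your proposal simply postulates the conclusion of this whole chain. If you want this to be a proof rather than an outline, you need to supply Lemma \ref{lem: uprcondva} (or an equivalent conductor--depth estimate for inner forms of $\GL$) and the $\gamma$-factor comparison argument explicitly.

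A smaller remark: after reducing to cuspidal $\pi_i$ on $G_i$, the transfers $\Pi_i$ need not be cuspidal on $\GL(n_i)$, so you are implicitly also using the analytic theory of Rankin--Selberg $L$-functions for non-cuspidal pairs (which follows from the cuspidal case via the M\oe glin--Waldspurger classification, but this should be acknowledged). Likewise, for the archimedean bound you should note that the infinitesimal character of $\JL(\pi_i)_v$ agrees with that of $\pi_{i,v}$ (this is \cite{MR2684298}); ``immediate from compatibility with the Langlands classification'' is a bit quick because you are comparing across two different groups.
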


This theorem will be proved in the rest of this section.

\subsection{}
We start with the case $n_2=1$. Let $G$ be an inner form of $\GL(n)$ and let $\pi\in\Pi_{\cusp}(\GL(n,\A))$.
For any Hecke character $\chi$ of $F$ we have
\[
L^S(s,\pi\times\chi,\Std_n\times\Std_1)=L^S(s,\pi\otimes\chi,\Std),
\]
where $\pi\otimes\chi$ denotes the twist of $\pi$ by $\chi$. Therefore, it suffices to show that $(G,\Std_n)$ satisfies (FE+) and \CC.
The standard $L$-function for $\pi$ was studied by Godement--Jacquet \cite{MR0342495}.
In particular, $\Std_n$ has property (FE+) and the local factors
\[
\gamma^{\GJ}(s,\pi_v,\Std_n)=\epsilon^{\GJ}(s,\pi_v,\Std_n)\frac{L^{\GJ}(1-s,\pi_v^\vee,\Std_n)}{L^{\GJ}(s,\pi_v,\Std_n)}
\]
defined by Godement--Jacquet (with respect to the fixed character $\psi_v$ as in \S\ref{sec: psivconv}) satisfy \eqref{eq: global FE}, \GF\ and
\eqref{eq: comparch} (and in particular \eqref{eq: archbnd}).
(Of course, in the case $n=1$ these are Tate's local factors.)
For brevity we write for $v$ finite
\[
\cond(\pi_v):=\expcond_v^{\GJ}(\pi_v,\Std_n)
\]
(see \eqref{eq: gammadot}).
This is the usual conductor of $\pi_v$.


Property \eqref{eq: locndbnd} for $(G,\Std_n)$ follows from the following standard result.\footnote{In the case where $D=F$
and $\pi_v$ is generic, it follows from the well-known description of $\cond(\pi_v)$
due to Jacquet--Piatetski-Shapiro--Shalika \cite{MR620708}, that $\cond(\pi_v) \ge \log_{q} \level_v (\pi_v)$.
In \cite[Lemma 5.6]{MR3352530} it was asserted that we should always have
$\cond(\pi_v) \le \log_{q} \level_v (\pi_v)$ for $D=F$, which is not true. After replacing this incorrect conductor bound by the upper bound $n \log_{q} \level_v (\pi_v)$ of the current Lemma \ref{lem: uprcondva}, the rest of the argument of [ibid.] goes through without any change.}

\begin{lemma} \label{lem: uprcondva}
Let $F$ be a non-archimedean field with residue field $\F_q$, $D$ a central division algebra over $F$ of degree $d$
and $G=\GL (m,D)$.
Let $\OO_D$ be the maximal order of $D$ and let $\varpi_D$ be a prime element of $D$.
Let $\pi$ be an irreducible representation $\pi$ of $G$ and let
\[
l=\min\{i\ge0:\pi^{K_i}\ne0\},
\]
where $K_0=\GL(m,\OO_D)$ and $K_i=1+\varpi_D^iM_m(\OO_D)$, $i>0$.
Then $\cond(\pi)\le n+m(l-1)$ where $n=md$.
\end{lemma}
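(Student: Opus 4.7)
The plan is to extract the bound directly from the Godement–Jacquet local functional equation by evaluating it at a test pair $(\Phi, f)$ adapted to the level of $\pi$. Since $\epsilon^{\GJ}(s, \pi, \Std_n) = c\, q^{\cond(\pi)(\frac12 - s)}$ and $\gamma^{\GJ}(s, \pi, \Std_n) = \epsilon^{\GJ}(s, \pi, \Std_n)\, L^{\GJ}(1-s, \pi^\vee, \Std_n)/L^{\GJ}(s, \pi, \Std_n)$, bounding $\cond(\pi)$ reduces to controlling the $q^{-s}$-degree of $\gamma^{\GJ}$, which the functional equation relates to the degrees of explicit zeta integrals.

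First I would fix nonzero vectors $v \in \pi^{K_l}$ and $\tilde v \in (\pi^\vee)^{K_l}$ with $\langle v, \tilde v\rangle \ne 0$; these exist because $K_l = K_l^{-1}$ makes the canonical pairing non-degenerate on $K_l$-fixed subspaces. Form the bi-$K_l$-invariant matrix coefficient $f(g) = \langle \pi(g) v, \tilde v\rangle$. For the Schwartz function on $M_m(D)$, take $\Phi = \mathbf{1}_{\varpi_D^{1-l} M_m(\OO_D)}$, with the convention $\Phi = \mathbf{1}_{M_m(\OO_D)}$ when $l = 0$; this is invariant under the natural two-sided action of $K_l$ on $M_m(D)$. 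The Fourier transform $\hat\Phi$ with respect to $\Psi(X) = \psi(\Trd X)$ is then a scalar multiple of $\mathbf{1}_{\varpi_D^{l-1} \mathfrak{d}_{D/F}^{-1} M_m(\OO_D)}$, where $\mathfrak{d}_{D/F}$ is the different of $D/F$. Using the Cartan (elementary-divisor) decomposition for $\GL(m, D)$ together with the bi-$K_l$-invariance of both factors, each of $Z(s, \Phi, f)$ and $Z(1-s, \hat\Phi, f^\vee)$ reduces to an explicit Laurent polynomial in $q^{-s}$, whose extremal degrees are controlled by the supports of $\Phi$ and $\hat\Phi$ through the factor $|\Nrd g|^{s + (n-1)/2}$ in the integrand.

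Finally, matching the extremal $q^{-s}$-exponents on the two sides of $Z(1-s, \hat\Phi, f^\vee) = \gamma^{\GJ}(s, \pi, \Std_n)\, Z(s, \Phi, f)$ yields the bound: the support shift from $\varpi_D^{1-l}$ to $\varpi_D^{l-1}$ contributes $m(l-1)$ after conversion from the $D$-valuation to the $F$-valuation through $\Nrd$, while the contribution of the different $\mathfrak{d}_{D/F}$ together with the self-dual measure normalization accounts for the $n = md$ term. The main obstacle is the careful bookkeeping of $q$-exponents versus $q^d$-exponents, together with verifying that the ratio $L^{\GJ}(1-s, \pi^\vee, \Std_n)/L^{\GJ}(s, \pi, \Std_n)$ does not shift the extremal $q^{-s}$-exponent of $\gamma^{\GJ}$ — equivalently, that the Laurent-polynomial parts of the two zeta integrals have the expected $s$-support. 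Once this accounting is complete, the inequality $\cond(\pi) \le n + m(l-1)$ follows.
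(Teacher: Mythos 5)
The broad strategy — bounding $\cond(\pi)$ by reading off $q^{-s}$-degrees in the Godement--Jacquet local functional equation at a well-chosen test pair $(\Phi,f)$ — is exactly the one the paper uses. But your specific choice of $\Phi$ is different from the paper's, and the difference is not cosmetic: it introduces a gap that you acknowledge (``the main obstacle'') but do not close.

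The paper takes $\Phi=\mathbf{1}_{K_l}$, the indicator function of the \emph{group} $K_l=1+\varpi_D^l M_m(\OO_D)\subset\GL(m,\OO_D)$ (with $K_0=\GL(m,\OO_D)$). Because every $g\in K_l$ has $|\Nrd(g)|=1$, the integrand $f(g)\Phi(g)|\Nrd(g)|^{s+(n-1)/2}$ is literally $s$-independent on the support, so $Z(s,f,\Phi)=\vol(K_l)$ is a non-zero constant. This is the crucial simplification: the whole functional equation collapses to $\gamma^{\GJ}(s,\pi)$ being a non-zero scalar multiple of $Z(1-s,f^\vee,\hat\Phi)$, and the desired bound drops out of the single degree constraint on the latter, coming from $\hat\Phi$ being supported in $\varpi_D^{1-d-l}M_m(\OO_D)$.

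Your $\Phi=\mathbf{1}_{\varpi_D^{1-l}M_m(\OO_D)}$ is the indicator of a \emph{lattice}, not a group. Now $Z(s,\Phi,f)$ is a genuine rational function in $q^{-s}$ (not a ``Laurent polynomial'' — matrix coefficients of general $\pi$ are not compactly supported, so the zeta integral is an infinite Laurent series whose sum is rational; only the quotient $Z/L$ is a Laurent polynomial). Your degree bookkeeping now needs the minimal $q^{-s}$-degree of $Z(s,\Phi,f)/L(s,\pi)$, and here the naive estimate fails: reading it off from the support of $\Phi$ gives $-m(l-1)$, and combined with your (smaller) support for $\hat\Phi$ on $\varpi_D^{l-d}M_m(\OO_D)$, the degree matching yields $\cond(\pi)\le m(d-1)=n-m$, which is \emph{false} in general (e.g., for a depth-zero supercuspidal of $\GL(n,F)$, $l=1$ and $\cond(\pi)=n$). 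The discrepancy is that the coefficient of $Z(s,\Phi,f)$ at the extremal degree $-m(l-1)$ actually vanishes — it is essentially $\langle\pi(\varpi_D^{1-l})\int_{K_0}\pi(u)v\,du,\tilde v\rangle$, and the inner integral projects onto the $K_0$-fixed subspace, which is zero precisely because $l$ was chosen minimal — so the true minimal degree is strictly higher and depends on $\pi$ in a way you would still have to analyze. The paper's choice $\Phi=\mathbf{1}_{K_l}$ makes this entire analysis unnecessary and is the missing idea.
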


\begin{proof}
The result follows from the relation between conductor and depth proved recently
in \cite{1311.1606}. (See also \cite{MR772712, MR882297, MR1110390, MR1950297, MR2303528} and the appendix of \cite{MR1915174}.)
For convenience we provide an easy, self-contained argument.

If $l=0$, i.e. if $\pi^{K_0}\ne0$, then $\cond(\pi)=m(d-1)=n-m$ \cite[\S I.6]{MR0342495}, and the lemma is clear.
Assume therefore that $l\ge1$.
Let $\Nrd$ (resp., $\Trd$) be the reduced norm (resp., trace) on $M_m(D)$.
For $\Phi\in\swrz(M_m(D))$ and a matrix coefficient $f$ of $\pi$ let
\[
Z(s,f,\Phi)=\int_Gf(g)\Phi(g)\abs{\Nrd(g)}^{s+\frac{n-1}2}\ dg.
\]
Consider the functional equation
\begin{equation} \label{eq: GJFE}
\frac{Z(1-s,f^\vee,\hat\Phi)}{L^{\GJ}(1-s,\pi^\vee)}=(-1)^{n-m}\epsilon^{\GJ}(s,\pi,\psi)\frac{Z(s,f,\Phi)}{L^{\GJ}(s,\pi)},
\end{equation}
where $\hat\Phi$ is the Fourier transform of $\Phi$ given by
\[
\hat\Phi(x)=\int_{M_m(D)}\Phi(y)\psi(\Trd(xy))\ dy
\]
for a suitable Haar measure on $M_m(D)$, a non-trivial additive character $\psi$ of $F$,
and $f^\vee(g)=f(g^{-1})$ (a matrix coefficient of $\pi^\vee$).
Take $f$ to be a bi-$K_l$-invariant matrix coefficient of $\pi$ such that $f(1)=1$
and $\Phi=1_{K_l}$. Then $Z(s,f,\Phi)\equiv\vol(K_l)$ so that $\gamma^{\GJ}(s,\pi)$ is a scalar multiple of $Z(1-s,f^\vee,\hat\Phi)$.
On the other hand, if $\psi$ has conductor $\OO_F$ then $\hat\Phi$ is supported in $\varpi_D^{1-d-l}M_m(\OO_D)$ \cite[Ch. X, \S2, Proposition 5]{MR0427267}.
Thus, $Z(1-s,f^\vee,\hat\Phi)$ is a Laurent series $\sum a_ix^i$ in $x=q^{-s}$ such that $a_i=0$ for all $i>m(d+l-1)$.
Hence, the left-hand side of \eqref{eq: GJFE} is a Laurent polynomial $\sum b_ix^i$ with $b_i=0$ for all $i>m(d+l-1)$.
Since $\epsilon^{\GJ}(s,\pi,\psi) = \epsilon^{\GJ}(0,\pi,\psi) x^{\cond(\pi)}$,
we conclude that $\cond(\pi)\le m(d+l-1)=n+m(l-1)$, as required.
\end{proof}

We also remark that for $v\in S_\infty$ we have
\begin{equation} \label{eq: archstd}
\param(\pi_v)\ll\infcond_\infty(\pi,r)^2\ll\param(\pi_v)^n.
\end{equation}

\subsection{}
Next consider $G=\GL ({n_1}) \times \GL ({n_2})$ with the tensor product representation $r=\Std_{n_1}\otimes\Std_{n_2}$.
This $L$-function was studied independently by Rankin and Selberg for $n_1=n_2=2$ (\cite{MR0001249, MR0002626}) and in the general case
by Jacquet, Piatetski-Shapiro and Shalika
(\cite{MR519356, MR528964, MR549066, MR633552, MR618323, MR623137, MR701565, MR1159102, MR2533003}, see also \cite{MR1990380, MR2508768}
and the references therein).
In particular, they showed that for $\pi$, $\sigma$ cuspidal the function
\[
s(s-1)L^{S(\pi)\cup S(\sigma)}(s,\pi\times\sigma,r)
\]
is an entire function of order one. Alternatively, this $L$-function can be also studied using the Langlands--Shahidi method.
The local factors arising from either method coincide.
We denote them by
\[
\gamma_v^{\JPSS}(s,\pi_v\times\sigma_v,r)=\epsilon_v^{\JPSS}(s,\pi_v\times\sigma_v,r)\frac{L^{\JPSS}(1-s,\pi_v\times\sigma_v,r^\vee)}
{L^{\JPSS}(s,\pi_v\times\sigma_v,r)},
\]
where again implicitly $\psi_v$ are as in \S\ref{sec: psivconv}.
These $\gamma$-factors satisfy \eqref{eq: global FE}, \GF\ and \eqref{eq: comparch}. Thus $r$ has property (FE+).
In the non-archimedean case we set for brevity $\cond(\pi_v\times\sigma_v)=\expcond_v^{\JPSS}(\pi_v\times\sigma_v,r)$
(see \eqref{eq: gammadot}).
By a result of Bushnell--Henniart \cite{MR1462836} we have
\begin{equation} \label{eq: BH}
\cond(\pi_v\times\sigma_v)\le n_2\cond(\pi_v)+n_1\cond(\sigma_v).
\end{equation}
(It is worthwhile to mention that this result does not depend on the Bushnell--Kutzko classification of supercuspidal representations
\cite{MR1204652}, unlike the lower bound for $\cond(\pi_v\times\sigma_v)$ proved in \cite{MR1606410}.)
By Lemma \ref{lem: uprcondva} we conclude that property \eqref{eq: locndbnd} holds for $\gamma_v^{\JPSS}(s,\pi_v\times\sigma_v,r)$.

\subsection{}
To finish the proof of Theorem \ref{thm: goodLfunctinos} in the general case,
we first recall the Jacquet--Langlands correspondence \cite{MR0401654, MR771672}, proved in this generality by Badulescu--Grbac \cite{MR2390289},
using results of Arthur--Clozel \cite{MR1007299}, with some complements in \cite{MR2684298}.
Let $G'=\GL (n)$ and let $G$ be an inner form of $G'$.
Then for any $\pi\in\Pi_{\disc}(G(\A))$ there exists a unique $\pi'\in\Pi_{\disc}(G'(\A))$
such that $\pi'_v=\pi_v$ for all $v$ where $G'$ splits. The representation $\pi'$ is called the Jacquet--Langlands transfer of $\pi$
and will be denoted by $\JL(\pi)$.
For any place $v$ we have
\begin{equation} \label{eq: JLgamma}
\gamma_v^{\GJ}(s,\pi_v,\psi_v)=\gamma_v^{\GJ}(s,\JL(\pi)_v,\psi_v).
\end{equation}
Note that it is not true in general that $\cond(\pi_v)=\cond(\JL(\pi)_v)$.
This is related to the fact that $\JL(\pi)_v$ does not depend only on $\pi_v$. (It is true however that $\pi_v$
is determined by $\JL(\pi)_v$.)
For instance, if $\pi$ is the identity representation of $G$, then $\JL(\pi)$ is the identity representation of $G$,
so that $\cond(\JL(\pi)_v)=0$ for all finite $v$. On the other hand, if $G$ does not split at $v$, then $\cond(\pi_v)>0$.
However, as was pointed out after \eqref{eq: locndbnd}, this is immaterial for our purposes, since \eqref{eq: JLgamma} implies that in any case
\begin{equation} \label{eq: conddifference}
\abs{\cond(\pi_v) - \cond(\JL(\pi)_v)} \le n.
\end{equation}



Now let $G=G_1\times G_2$ where $G_i$ is an inner form of $G'_i=\GL ({n_i})$, $i=1,2$. \label{sec: RSconvda}
Let $\pi\in\Pi_{\disc}(G_1)$, $\sigma\in\Pi_{\disc}(G_2)$ and let $\JL(\pi)$ and $\JL(\sigma)$ be their
Jacquet--Langlands transfers to $G'_1$ and $G'_2$, respectively.
We have
\begin{equation} \label{eq: unrJL}
L^{S(\pi)\cup S(\sigma)}(\pi\times\sigma,\Std_{n_1}\times\Std_{n_2})=
L^{S(\pi')\cup S(\sigma')}(\JL(\pi)\times\JL(\sigma),\Std_{n_1}\times\Std_{n_2}).
\end{equation}
Since we do not have at our disposal an independent theory of Rankin--Selberg convolutions for $G_1\times G_2$, we will resort
to the one on $G'_1\times G'_2$. That is, we simply define
\[
\gamma_v^{\JPSS\circ\JL}(s,\pi\times\sigma,\Std_{n_1}\times\Std_{n_2})=
\gamma_v^{\JPSS}(s,\JL(\pi)_v\times\JL(\sigma)_v,\Std_{n_1}\times\Std_{n_2}).
\]
(In fact, these $\gamma$-factors depend only on $\pi_v$ and $\sigma_v$, but we do not need to use this fact.)
These factors satisfy properties \eqref{eq: global FE} and \GF\ by \eqref{eq: unrJL} and the fact that $\gamma_v^{\JPSS}$ satisfy
\eqref{eq: global FE} and \GF\ for $G_1'\times G_2'$.
Property \eqref{eq: archbnd} for $\gamma_v^{\JPSS\circ\JL}$ follows from the corresponding result for $\gamma_v^{\JPSS}$
and the fact that (under a suitable identification) $\JL(\pi)_v$ has the same infinitesimal character as $\pi_v$ \cite{MR2684298}.
Combining \eqref{eq: BH}, \eqref{eq: conddifference} and Lemma \ref{lem: uprcondva} (for both $\pi_v$ and $\sigma_v$),
we conclude property \eqref{eq: locndbnd} for the factors $\gamma_v^{\JPSS\circ\JL}$.
Thus, we can apply Corollary \ref{cor: enufsomegamma} to conclude the proof of Theorem \ref{thm: goodLfunctinos}.

\section{Classical groups} \label{SectionClassical}

We now consider the case of quasi-split classical groups, i.e. the second part of Theorem \ref{thm: ABC}.

By a classical group we mean either a symplectic group (which is automatically split), a special orthogonal group,
or a unitary group. In the latter case we denote by $E$ the quadratic extension of $F$ pertaining to the hermitian
form defining the unitary group (i.e., the quadratic extension over which the group splits). In all other cases let $E=F$.
The $L$-group of a classical group is equipped with a natural embedding
\[
\can_G:\LG\rightarrow\,^L{\Res_{E/F}\GL(m)},
\]
where, if $r$ is the $\bar F$-rank of $G$, then
\[
m=\begin{cases}2r+1,&\text{in the symplectic case},\\
2r,&\text{in the orthogonal case},\\
r,&\text{in the unitary case}.\end{cases}
\]
Here, in the unitary case we have
\[
^L{\Res_{E/F}\GL(m)}\simeq(\GL(m,\C)\times\GL(m,\C))\rtimes W_F,
\]
where $W_F$ acts on $\GL(m,\C)\times\GL(m,\C)$ via $\Gal(E/F)$ by permuting the factors.

Let $p_n:{^L}\Res_{E/F}\GL(n)\rightarrow\GL([E:F]n,\C)$ be the representation given by the projection $^L{\GL(n)}\rightarrow\GL(n,\C)$ if $E=F$,
and, in the case $E\ne F$, by $p_n((x,y),e)=\diag(x,y)$, $x,y\in\GL(n,\C)$, and
\[
p_n(I_n,I_n,\sigma)=\begin{cases}I_{2n},&\text{if the image of $\sigma$ in $\Gal(E/F)$ is the identity,}\\
\sm{}{I_n}{I_n}{},&\text{otherwise,}\end{cases}
\]
using the description of $^L{\Res_{E/F}\GL(n)}$ above.
We also consider the representation
\[
T_{m,n}:{^L}(\Res_{E/F}\GL(m)\times\Res_{E/F}\GL(n))\rightarrow\GL([E:F]mn,\C)
\]
obtained by the composition of $p_{nm}$ with the ``tensor product'' homomorphism
\[
{^L}(\Res_{E/F}\GL(m)\times\Res_{E/F}\GL(n))\rightarrow{^L}\Res_{E/F}\GL(mn).
\]
In the case $E=F$, $T_{m,n}=\Std_m\times\Std_n$.

\begin{theorem} \label{thm: classical}
The following pairs $(G,r)$ satisfy properties (FE+) and \CC:
\begin{enumerate}
\item ($E=F$), $G=\GL (n)\times\GL(1)$, $r=\wedge^2\otimes\Std_1$, where $\wedge^2$ is the exterior square representation.
\item ($E=F$), $G=\GL (n)\times\GL(1)$, $r=\Sym^2\otimes\Std_1$, where $\Sym^2$ is the symmetric square representation.
\item ($E\ne F$), $G=\Res_{E/F}\GL (n)$, and $r=\Asai^\pm$ is either the Asai or the twisted Asai representation (cf. \cite{MR2767514}).
\item $G=G'\times\Res_{E/F}\GL (n)$, where $G'$ is a quasi-split classical group and $r=T_{m,n}\circ(\can_{G'}\otimes\id_n)$.
\end{enumerate}
\end{theorem}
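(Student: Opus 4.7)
The plan is to reduce to the cuspidal case by Lemma \ref{lem: cuspenuf} and then apply Corollary \ref{cor: enufsomegamma}: for each pair $(G,r)$ I need to exhibit local factors $\gamma_v^{\gnr}(s,\pi,r)$ satisfying the global functional equation \eqref{eq: global FE}, the local properties \GF, and the conductor bounds \eqref{eq: archbnd} and \eqref{eq: locndbnd}, together with a uniform-in-$\pi$ bound on the number of poles of $L^{S(\pi)}(s,\pi,r)$.

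For parts (1), (2), and (3), the local factors are provided by Shahidi's Langlands--Shahidi method, applied to the Siegel parabolic of $\GL(2n)$ or $\GL(2n+1)$ (twisted by $\GL(1)$) for the exterior/symmetric square, and to the Siegel parabolic of an outer form of $\GL(2n)$ for Asai. The resulting $\gamma_v^{\Sh}$ automatically satisfy \eqref{eq: global FE}, \GF, and the archimedean compatibility \eqref{eq: comparch}, hence \eqref{eq: archbnd}. The two remaining points are the uniform bound on the number of poles of $L^{S(\pi)}$ and the local conductor bound \eqref{eq: locndbnd}; for both I would appeal to the integral representations of Rankin--Selberg type available in each case (Jacquet--Shalika for $\wedge^2$, Bump--Ginzburg for $\Sym^2$, Flicker for Asai, with the $\GL(1)$-twist inserted where appropriate). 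Each of these realizes the completed $L$-function as an integral of a cusp form against an Eisenstein series (or theta series), so its poles are controlled by those of the Eisenstein series and are thus uniformly bounded in $\pi$; the conductor bound \eqref{eq: locndbnd} is then extracted from the local unramified computation combined with a Godement--Jacquet-style argument in the spirit of Lemma \ref{lem: uprcondva}.

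Part (4) is the main thrust, and here I use functoriality to reduce to the Rankin--Selberg case of \S\ref{sec: goodLexamples}. By Arthur's endoscopic classification of discrete automorphic representations of quasi-split classical groups \cite{MR3135650}, extended by Mok to unitary groups \cite{MR3338302}, every $\pi\in\Pi_{\cusp}(G'(\A))$ admits a functorial transfer along $\can_{G'}$ to an isobaric automorphic representation $\Pi=\boxplus_i\Pi_i$ of $\GL(m,\A_E)$ whose cuspidal constituents $\Pi_i\in\Pi_{\cusp}(\GL(m_i,\A_E))$ satisfy $\sum_i m_i = m$, and this transfer is locally compatible at every place. Consequently
\[
L^{S(\pi)\cup S(\sigma)}(s,\pi\times\sigma,r) = \prod_i L^{S(\pi)\cup S(\sigma)}(s,\Pi_i\times\sigma,T_{m_i,n}),
\]
with a corresponding factorization of the local $\gamma$-factors. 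I then take $\gamma_v^{\gnr}(s,\pi\times\sigma,r)$ to be the product of the Jacquet--Piatetski-Shapiro--Shalika factors $\gamma_v^{\JPSS}(s,\Pi_{i,v}\times\sigma_v,T_{m_i,n})$ studied in \S\ref{sec: goodLexamples}. Properties \eqref{eq: global FE}, \GF, and \eqref{eq: archbnd} are then immediate from the Rankin--Selberg theory over $E$, while \eqref{eq: locndbnd} follows from \eqref{eq: BH} together with the fact that $\cond(\Pi_{i,v})$ is bounded by a constant multiple of $\log_{q_v}\level_v(\pi_v)$, which is a consequence of the local compatibility of Arthur's (resp.\ Mok's) correspondence at ramified places combined with Lemma \ref{lem: uprcondva}. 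Uniform control of the number of poles follows since each Rankin--Selberg factor contributes at most two poles and the number of constituents $\Pi_i$ is at most $m$.

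The main obstacle lies in part (4): making precise the assertion that the level (and infinitesimal character) of the transfer $\Pi$ is polynomially controlled by that of $\pi$ at ramified places. This requires the local Langlands correspondence for $G'$, which is part of Arthur's and Mok's package but depends on the full local-global machinery (stable twisted trace formula and the long list of prerequisites) cited in the introduction. For parts (1)--(3), the subtle point is that, as noted in Remark \ref{rem: Shahidi}, Shahidi's method alone does not appear sufficient to bound the number of poles uniformly in $\pi$, hence the need to supplement it with integral representations.
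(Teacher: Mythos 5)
Your overall strategy (reduce to cuspidal via Lemma \ref{lem: cuspenuf}, apply Corollary \ref{cor: enufsomegamma}, use integral representations for the exterior/symmetric square and Asai, and use Arthur--Mok functoriality for part (4)) matches the paper's architecture. However, both places where you need the conductor bound (CC') contain genuine gaps.

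For parts (1)--(3), you propose to extract the conductor bound from a ``Godement--Jacquet-style argument in the spirit of Lemma \ref{lem: uprcondva}.'' But there is no Godement--Jacquet zeta integral for $\wedge^2$, $\Sym^2$ or $\Asai^\pm$; the available integral representations are of Whittaker/Eisenstein type and do not produce the needed conductor estimate in the manner of Lemma \ref{lem: uprcondva}. The paper instead uses a completely different device: the Langlands--Shahidi multiplicativity identity
\[
\epsilon_v^{\Sh}(s,\pi_v\times\chi_v,\Sym^2\otimes\Std_1)\,\epsilon_v^{\Sh}(s,\pi_v\times\chi_v,\wedge^2\otimes\Std_1)=\epsilon_v^{\JPSS}(s,\pi_v\times(\pi_v\otimes\chi_v)),
\]
combines it with the Bushnell--Henniart bound \eqref{eq: BH} for Rankin--Selberg conductors, and then invokes Henniart's theorem \cite{MR2595008} that $\expcond_v^{\Sh}(\cdot,\Sym^2)$ and $\expcond_v^{\Sh}(\cdot,\wedge^2)$ are both non-negative to split the sum; the Asai case is handled analogously using $\Asai^+\oplus\Asai^-$. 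Without the non-negativity input, knowing the sum of the two conductors is of no use, so this step cannot be waved through.

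For part (4), the key claim that $\cond(\Pi_{i,v})\ll\log_{q_v}\level_v(\pi_v)$ ``is a consequence of the local compatibility of Arthur's (resp.\ Mok's) correspondence at ramified places combined with Lemma \ref{lem: uprcondva}'' is circular: to apply Lemma \ref{lem: uprcondva} to $\Pi_{i,v}$ one needs a bound on $\level_v(\Pi_{i,v})$ in terms of $\level_v(\pi_v)$, which is precisely what has to be established, and the local Langlands correspondence for $G'$ does not by itself deliver a level/conductor comparison between a representation and its transfer. The paper instead routes through the Piatetski-Shapiro--Rallis doubling $L$-function (the $n=1$ case of part (4)): it proves a stability result for $\gamma_v^{\PSR}$ (Proposition \ref{prop: PSRgammastability}, Corollary \ref{cor: PSRgammastability}) showing that for $\cond(\chi_v)>C\level_v(\pi_v)$ the doubling $\gamma$-factor collapses to a product of Tate $\gamma$-factors, establishes the identity $\gamma_v^{\PSR}(s,\pi_v\times\chi_v,r)=\gamma_v^{\GJ}(s,\Ar(\pi)_v\times\chi_v,T_{m,1})$ via a Henniart-type comparison of functional equations, and only then extracts the bound \eqref{eq: laosi} on $\cond(\Ar(\pi)_v)$. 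Your write-up omits the doubling method and the stability argument entirely; without them the conductor bound for the transfer has no proof.
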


This theorem implies Theorem \ref{thm: ABC} for quasi-split classical groups by the explicit description
of the representations appearing in the definition of property (L) (e.g., \cite[Appendix C]{MR2683009}).
Actually, for Theorem \ref{thm: ABC}, in the first two parts of Theorem \ref{thm: classical}
it is enough to consider the exterior square and the symmetric square representations themselves
(without the twist), but we include the slightly more general statement since it does not incur additional difficulty
and the twisted representations are relevant for the $\operatorname{GSpin}$ groups (cf.~\cite{MR3227529}). 
(For the Asai $L$-function there is no need to consider $\Asai^\pm\times\Std_1$, since we can incorporate the twist into the representation.)

\subsection{}
In this subsection we consider the first three cases of Theorem \ref{thm: classical}.
By Lemma \ref{lem: cuspenuf} and Theorem \ref{thm: goodLfunctinos} it is enough to consider the cuspidal case.
Let $\pi\in\Pi_{\cusp}(\GL(n,\A_E))$.
By Remark \ref{rem: Shahidi}, the partial $L$-function has finitely many poles and of order one,
and local factors $\gamma_v^{\Sh}(s,\pi\times\chi,r)$ are defined and satisfy \eqref{eq: global FE}, \GF\ and
\eqref{eq: comparch}.


There are several methods to study the poles of $L$-functions.
The results of \cite{MR2805875, MR3366033} address the analytic properties of the completed $L$-functions.
They are rather delicate and rely on Arthur's endoscopic classification (extended by Mok to unitary groups).
For our purposes we only care about the partial $L$-functions and this can be analyzed using
the Rankin--Selberg method in a rather crude form. We summarize it in the following.

\begin{lemma}
Let $\pi\in\Pi_{\cusp}(\GL(n,\A_E))$ and $\chi$ a Hecke character of $E$.
Then the following functions are entire and of finite order. (In the first three cases $E=F$.)
\begin{enumerate}
\item $s(1-s)L^S(s,\pi\times\chi,\Sym^2\times\Std_1)$.
\item $s(1-s)L^S(s,\pi\times\chi,\wedge^2\times\Std_1)$, ($n$ even).
\item $L^S(s,\pi\times\chi,\wedge^2\times\Std_1)$ ($n$ odd).
\item $s(1-s)L^S(s,\pi\times\chi,\Asai^\pm\times\Std_1)$.
\end{enumerate}
\end{lemma}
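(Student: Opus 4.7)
The plan is to treat each of the four $L$-functions via its standard Rankin--Selberg integral representation and to read off the two required analytic features from the known analytic behavior of the Eisenstein series appearing in the integral: finite order in vertical strips, and simple poles confined to $s = 0, 1$ (or no poles at all, in the odd exterior square case). Since $\pi$ is cuspidal, in each case the integral converges absolutely in a right half-plane and its meromorphic continuation to $\C$ is inherited, together with the location of its poles, from that of the Eisenstein series.

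For the twisted symmetric square I would use the Shimura integral, as further developed by Gelbart--Jacquet and Bump--Ginzburg, which expresses $L^S(s, \pi \times \chi, \Sym^2 \times \Std_1)$, up to a finite product of local factors at the places in $S$, as a bilinear pairing of a cusp form in the space of $\pi$ against a theta series and a Siegel-type Eisenstein series whose inducing datum involves $\chi$. The only possible poles of the latter in the critical strip are simple and located at $s = 0$ and $s = 1$, which accounts for the $s(1-s)$ prefactor. For the twisted exterior square I would use the Jacquet--Shalika integral: when $n$ is even, a cusp form is paired against a theta series and a maximal parabolic Eisenstein series, which contributes simple poles at $s = 0, 1$; when $n$ is odd, no theta series is required and the Eisenstein series in play is holomorphic throughout the relevant region, giving entireness of $L^S(s, \pi \times \chi, \wedge^2 \times \Std_1)$ without any prefactor.

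For the Asai $L$-function I would appeal to Flicker's integral representation (and its variant for $\Asai^-$, obtained by an additional twist by a character of $E^\times / F^\times$ whose restriction to $F^\times$ is the quadratic character associated to $E/F$), which realizes $L^S(s, \pi \times \chi, \Asai^\pm \times \Std_1)$ as the pairing of a cusp form on $\GL(n, \A_E)$ against a mirabolic Eisenstein series on $\GL(n, \A_F)$ twisted by $\chi$. Once again the only possible poles of the Eisenstein series in the region of interest are simple and located at $s = 0, 1$. Finite order in all four cases follows from the standard fact that Eisenstein series are of finite order in vertical strips, which is a consequence of convexity bounds applied to their constant terms together with the functional equation.

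The main obstacle I anticipate is confirming uniform applicability of the integral representations to the twisted $L$-functions: for the symmetric and exterior square integrals this is essentially cosmetic, since $\chi$ can be absorbed into the inducing datum of the Eisenstein series and into the choice of theta series; for the Asai case the sign $\pm$ can likewise be incorporated into $\chi$ via an appropriate quadratic twist, after which one reduces to Flicker's original analysis. The crude qualitative output required here --- entireness of the indicated product and finite order --- is insensitive to these details, and hence the lemma follows in all four cases.
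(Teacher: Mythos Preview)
Your approach is essentially the same as the paper's: Rankin--Selberg integrals with the poles coming from the (mirabolic or metaplectic) Eisenstein series. However, there is one genuine gap. The global integral unfolds to $L^S(s,\pi,r)\cdot\prod_{v\in S} I_v(s,W_v,\Phi_v,\dots)$, and you implicitly pass from ``$s(1-s)\cdot(\text{global integral})$ is entire'' to ``$s(1-s)L^S(s,\pi,r)$ is entire'' without justification. This step requires knowing that for every $s_0\in\C$ one can choose local data at the places in $S$ so that $\prod_{v\in S}I_v(s_0,\dots)\ne 0$. The paper carries this out explicitly (for the Asai case, with the exterior square analogous): one rewrites $I_v$ as an integral over $\xi\in F_v^n$ of a function $J_v(W_v,\xi,s)$, uses the asymptotics of Whittaker functions to get meromorphic continuation of $J_v$ with the required uniformity in $\xi$, and then invokes the Kirillov model (that any smooth $(N_v(E_v),\psi_v)$-equivariant function of compact support modulo $N_v$ arises as $W_v|_{P_n}$) to conclude nonvanishing. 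This is not a formality and should be addressed.

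Two smaller inaccuracies: in the odd exterior square Jacquet--Shalika integral there is \emph{no} Eisenstein series at all (the zeta integral is automatically entire), not merely a holomorphic one; and for the twisted symmetric square in general rank you need Takeda's extension of Bump--Ginzburg, which uses Eisenstein series on the metaplectic double cover of $\GL(n)$ --- the Gelbart--Jacquet integral only handles $n=2$. Finally, note that finite order is already known here from the Langlands--Shahidi theory (via M\"uller and Gelbart--Lapid), so you need not extract it from the integral representation; only the pole count is at stake.
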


\begin{proof}
The most difficult case is the twisted symmetric square, since it involves Eisenstein series on a double cover of $\GL (n)$
(introduced in \cite{MR743816}).
This case was worked out recently by Takeda \cite{MR3161314,MR3384864} who extended earlier work by Bump--Ginzburg \cite{MR1173928}.\footnote{For our purposes we could have avoided this case by using Remark \ref{rem: weaknice is enough}.}
The other cases are easier since they involve the well-understood mirabolic Eisenstein series on $\GL(n)$.
The argument is completely standard.
For completeness we include it for the Asai $L$-function of a representation $\pi\in\Pi_{\cusp}(\GL (n, \A_E))$.
(The twisted Asai $L$-function is obtained by twisting $\pi$.)

For any Schwartz-Bruhat function $\Phi\in\swrz(\A^n)$ let $\Eisen_\Phi$ be the normalized Eisenstein series on $\GL (n,F)\bs\GL (n,\A)$ given by
\[
\Eisen_\Phi(g,s)=\int_{\R_{>0}}\sum_{\xi\in F^n\setminus\{0\}}\Phi(\xi tg)\abs{\det tg}^s\ d^*t.
\]
Here we embed $\R\hookrightarrow F\otimes\R\hookrightarrow\A$ by $x\mapsto 1\otimes x$.
As for the Riemann zeta function, using the Poisson summation formula we have
\begin{align*}
\Eisen_\Phi(g,s)=&\int_1^\infty\sum_{\xi\in F^n\setminus\{0\}}\Phi(\xi tg)\abs{\det tg}^s\ d^*t-\frac{\Phi(0)}s
+\\&\int_1^\infty\sum_{\xi\in F^n\setminus\{0\}}\hat\Phi(\xi t\iota(g))\abs{\det tg}^{1-s}\ d^*t+
\frac{\hat{\Phi}(0)}{s-1}
\end{align*}
where $\iota(g)=\,^tg^{-1}$. In particular, the function $s(s-1)\Eisen_\Phi(g,s)$ is entire.

The Rankin--Selberg integral
\begin{equation} \label{eq: RSAsai}
\int_{\GL (n,F)\bs\GL (n,\A_F)}\varphi(g)\Eisen_\Phi(g,s)\ dg
\end{equation}
for a cuspidal automorphic form $\varphi$ on $\GL (n,E)\bs\GL (n,\A_E)$
is a twisted version of the usual integral for $\GL(n)\times\GL(n)$. It was considered in \cite{MR984899}.
Let $N_n$ be the group of upper unitriangular matrices in $\GL (n)$.
Fix a non-degenerate character $\psi_N$ of $N_n(\A_E)$ which is trivial on $N_n(\A)$.
Let
\[
W(g)=\int_{N_n(E)\bs N_n(\A_E)}\varphi(ng)\psi_N(n)\ dn
\]
be the Fourier coefficient of $\varphi$.
Assume that $\varphi$ corresponds to a factorizable vector in $\pi=\otimes\pi_v$ (as an abstract representation).
Then we can write $W(g)=\prod_vW_v(g_v)$ where $W_v$ is in the Whittaker model of $\pi_v$ (with respect to $\psi_{N_v}$),
and $W_v(e)=1$ for all $v\notin S$.
For $\Phi=\otimes\Phi_v$ and $\Re s$ large, \eqref{eq: RSAsai} unfolds to
\[
L^S(s,\pi,\Asai^+)\prod_{v\in S}I_v(s,W_v,\Phi_v)
\]
where
\[
I_v(s,W_v,\Phi_v)=\int_{N_n(F_v)\bs\GL (n,F_v)}W_v(g)\Phi_v(e_ng)\abs{\det g}^s\ dg,
\]
$e_n$ is the row vector $(0,\dots,0,1)\in F^n$.
Since $s(1-s)\Eisen_\Phi(g,s)$ is entire, it remains to show that for any $s\in\C$ we can choose data $W_v$, $\Phi_v$ such that
$I_v(s,W_v,\Phi_v)\ne0$. We can write
\begin{equation} \label{eq: IintermsofJ}
I_v(s,W_v,\Phi_v)=\int_{F_v^n}J_v(W_v,\xi,s)\Phi_v(\xi)\ d\xi,
\end{equation}
where
\begin{eqnarray*}
J_v(W_v,e_ng,s) &=& \int_{N_n(F_v)\bs P_n(F_v)}W_v(pg)\abs{\det pg}^{s-1}\ dp\\ &=&
\abs{\det g}^{s-1}\int_{K_{n-1,v}}\int_{T_{n-1}(F_v)}W_v(tkg)\abs{\det t}^{s-1}\delta_{B_{n-1}}(t)^{-1}\ dt.
\end{eqnarray*}
Here $P_n$ is the stabilizer of $e_n$, $T_{n-1}$ is the group of diagonal matrices
with $1$ in the right bottom corner, $B_{n-1}$ is the group of upper triangular matrices of
$\GL ({n-1})$ embedded in $\GL (n)$ via $b\mapsto\sm b001$ and $K_{n-1,v}$ is the standard maximal compact subgroup of $\GL (n-1,F_v)$.
Using the asymptotics of the Whittaker function
(see \cite{MR581582, MR2495561} for the $p$-adic case, \cite[Ch.~15]{MR1170566} for the archimedean case),
the function $J_v(W_v,\xi,s)$ admits a meromorphic continuation in $s$.
More precisely, in the non-archimedean case there exists
a polynomial $P_v(x)$ such that for any fixed $W_v$, $P_v(q_v^{-s})J_v(W_v,\xi,s)$ is a polynomial in $q_v^{-s}$ which is locally
constant in $\xi\in F_v^n\setminus\{0\}$. In the archimedean case, for any real number $A$ there exists a polynomial $P_v(s)$ such that
$P_v(s)J_v(W_v,\xi,s)$ is continuous in $(\xi,s)\in F_v^n\setminus\{0\}\times\{\Re s>A\}$ and holomorphic
in $s$ for $\Re s>A$. Hence, $P_v(s)J_v(W_v,\xi,s)$, $\xi\in F_v^n\setminus\{0\}$ is a normal family of analytic functions on $\Re s>A$.
Taking $\Phi_v\in\swrz(F_v^n\setminus\{0\})$ we infer that in both the archimedean and non-archimedean cases \eqref{eq: IintermsofJ} holds for all $s$.
Assume on the contrary that for some fixed $s\in\C$, $I_v(s,W_v,\Phi_v)=0$ for all $\Phi_v\in\swrz(F_v^n\setminus\{0\})$ and $W_v$.
Varying $\Phi_v$ we infer that $J_v(W_v,\xi,s)=0$ for all $\xi$. In particular,
\[
\int_{N_n(F_v)\bs P_n(F_v)}W_v(p)\abs{\det p}^{s-1}\ dp=0
\]
for all $W_v$. Since we can take $W_v$ such that its restriction to $P_v(E_v)$ is an arbitrary smooth left $(N_v(E_v),\psi_v)$-invariant
function which is compactly supported modulo $N_v(E_v)$ \cite{MR0404534, MR2733072, MR3352025} we get a contradiction.

For the twisted exterior square case one can argue in a similar way using the Jacquet--Shalika integral representation \cite{MR1044830}.
(In the even case, the pole will come from the Eisenstein series as above. In the odd case there is no Eisenstein series
and the zeta integral is entire.)
The integral $J_v$ in this case involves an extra unipotent integration, but its meromorphic continuation (with the extra uniformity
property mentioned above) follows from the argument of \cite{MR1044830}. (See \cite{1108.2200} for more details.)
Alternatively, one can use the Bump-Friedberg integral \cite{MR1159108} (which does not involve an extra unipotent integration,
and hence the argument above applies with little change) to infer that $s_2(1-s_2)L^S(s_1,\pi,\Std_n)L^S(s_2,\pi\times\chi,\wedge^2\times\Std_1)$
(and hence also $s(1-s)L^S(s,\pi\times\chi,\wedge^2\times\Std_1)$) is entire. Strictly speaking, only the case $\chi=1$ is considered,
in \cite{MR1159108} but it is a simple matter to incorporate a non-trivial $\chi$ into the integral (and the unramified calculation is
essentially the same).

We also remark that in the case where $n$ is odd, it easily follows from the theory of Eisenstein series that $L^S(s,\pi\otimes\chi,\wedge^2\otimes\Std_1)$
is holomorphic (see \cite{MR1701344} for a more precise result).
\end{proof}

We turn to property \eqref{eq: locndbnd}. By standard properties of the Shahidi local factors we have
\[
\epsilon_v^{\Sh}(s,\pi_v\times\chi_v,\Sym^2\times\Std_1)\epsilon_v^{\Sh}(s,\pi_v\times\chi_v,\wedge^2\times\Std_1)
=\epsilon_v^{\JPSS}(s,\pi_v\times(\pi_v\otimes\chi_v)).
\]
In particular,
\[
\expcond_v^{\Sh}(\pi_v\times\chi_v,\Sym^2\times\Std_1)+\expcond_v^{\Sh}(\pi_v\times\chi_v,\wedge^2\times\Std_1)=
\expcond_v^{\JPSS}(\pi_v\times\pi_v\otimes\chi_v),
\]
and by \eqref{eq: BH} we conclude that
\[
\expcond_v^{\Sh}(\pi_v\times\chi_v,\Sym^2\times\Std_1)+\expcond_v^{\Sh}(\pi_v\times\chi_v,\wedge^2\times\Std_1)\le
n(\cond(\pi_v)+\cond(\pi_v\otimes\chi_v))\le 2n\cond(\pi_v)+n^2\cond(\chi_v).
\]
Both $\expcond_v^{\Sh}(\pi_v\times\chi_v,\Sym^2\times\Std_1)$ and $\expcond^{\Sh}(\pi_v\times\chi_v,\wedge^2\times\Std_1)$
are non-negative by \cite{MR2595008} (which relies on the validity of the local Langlands conjecture for $\GL (n)$).
Therefore, we infer \eqref{eq: locndbnd} for $\Sym^2\times\Std_1$ and $\wedge^2\times\Std_1$ from the corresponding relation
for $\Std_n$.

Similarly, if $v$ is a place of $F$ which is inert in $E$ and $w$ is the place of $E$ above $v$ then we may view
$\pi_v$ as a representation $\pi_w$ of $\GL(n,E_w)$ and we have
\[
\expcond_v^{\Sh}(\pi_v,\Asai^+)+\expcond_v^{\Sh}(\pi_v,\Asai^-)=\expcond_w^{\JPSS}(\pi_w\times\pi_w^\tau,T_{n,n})
\]
where $\tau$ is the Galois involution of $E_w/F_v$.
Thus, by \cite{MR2595008}
\[
\expcond_v^{\Sh}(\pi_v,\Asai^\pm)\le\expcond_w^{\JPSS}(\pi_w\times\pi_w^\tau,T_{n,n}),
\]
and therefore
\[
\expcond_v^{\Sh}(\pi_v,\Asai^\pm)\le n(\cond(\pi_w)+\cond(\pi_w^\tau))=2n\cond(\pi_w).
\]
In the split case, if $w_1$, $w_2$ are the places of $E$ above $v$ (with $E_{w_1},E_{w_2}\simeq F_v$)
and $\pi_v=\pi_{w_1}\otimes\pi_{w_2}$ then
\[
\epsilon_v^{\Sh}(\pi_v,\Asai^\pm)=\epsilon_v^{\Sh}(\pi_{w_1}\times\pi_{w_2},\Std_n\times\Std_n)
\le n(\cond(\pi_{w_1})+\cond(\pi_{w_2})).
\]
In both cases the relation \eqref{eq: locndbnd} for $\Asai^\pm$ follows from the case of $\Std_n$.

\subsection{}
Next we consider the case where $G=G'\times\Res_{E/F}\GL (1)$ and $r=T_{m,1}\circ(\can_{G'}\otimes\id_1)$, where $G'$ is a classical group
and $m$, $T_{m,1}$ and $\can_{G'}$ are as in the beginning of the section.
The $L$-functions in this case were studied by Piatetski-Shapiro and Rallis using the doubling method \cite{MR803369, MR892097, MR848553} --
see also \cite{MR2192828} for some complements.\footnote{See \cite[\S 7.2]{MR3006697} for a small correction.}
The analysis of poles was carried out in \cite{MR892097, MR1159110} and completed by Yamana in \cite{MR3211043}.
In particular, $r$ has property (FE+).
Let $\gamma_v^{\PSR}(s,\pi\times\chi,r)$ be the local factors conceived by Piatetski-Shapiro--Rallis and explicated in \cite{MR2192828}.
They satisfy properties \eqref{eq: global FE}, \GF, and \eqref{eq: comparch}.

We now turn our attention to condition \eqref{eq: locndbnd}.
It is tempting to try to prove it using the definition of $\gamma_v^{\PSR}(s,\pi\times\chi,r)$,
analogously to Lemma \ref{lem: uprcondva}.
However, at this stage we are unfortunately unable to carry this idea through. Instead, we will resort to a different approach
using Arthur's work (adapted by Mok to the case of unitary groups) on functoriality for classical groups.

First, we recall the following stability result.
\begin{proposition}[\cite{MR2195117, MR2406494}] \label{prop: PSRgammastability}
Let $v$ be a non-archimedean place and $\pi_1$, $\pi_2$ irreducible representations of $G(F_v)$.
In the unitary case assume that $\pi_1$ and $\pi_2$ have the same central character.
Then there exists a constant $N=N(\pi_1,\pi_2)$ such that
\[
\gamma_v^{\PSR}(s,\pi_1\times\chi,r)=\gamma_v^{\PSR}(s,\pi_2\times\chi,r)
\]
for any character $\chi$ of $E_v^*$ such that $\cond(\chi)>N$.
\end{proposition}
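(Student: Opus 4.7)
The plan is to derive the stability statement from the Piatetski-Shapiro--Rallis local functional equation, which characterizes $\gamma_v^{\PSR}(s,\pi\times\chi,r)$ as the proportionality constant
\[
Z_v(1-s,\tilde f,M_w\Phi_s)=\gamma_v^{\PSR}(s,\pi\times\chi,r)\,Z_v(s,f,\Phi_s),
\]
where $Z_v$ is the doubling integral pairing a matrix coefficient $f$ of $\pi$ against a $\chi$-twisted degenerate section $\Phi_s$ of the representation Siegel-induced from the parabolic $P=MN$ of the doubled classical group $\tilde H$, and $M_w$ is the standard intertwining operator to the opposite parabolic. Since both sides of this equation are bilinear in the data, to prove equality of $\gamma_v^{\PSR}(s,\pi_i\times\chi,r)$ for $i=1,2$ it suffices to exhibit, for $\chi$ of sufficiently large conductor, a common section $\Phi_s$ and matrix coefficients $f_i$ of $\pi_i$ such that both zeta integrals and their images under $M_w$ factor as an expression depending on $\pi_i$ only through its central character, in a way that is compatible with the functional equation.

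The first step is the standard Bruhat decomposition reduction: one picks a Weyl element $\delta_0$ lying in an open $G\times G$-orbit on $P\bs\tilde H$ and restricts attention to sections $\Phi_s$ supported near $\delta_0$. For such sections the integral $Z_v(s,f,\Phi_s)$ reduces to an integral of $f(g)$ against a function of the form $\chi(\nu(g))\phi(g)$ on a suitable compact subset of $G(F_v)$, where $\nu$ is an algebraic map to $E_v^\times$ (in the unitary case, to a similitude-type cocycle). The second, decisive step is the highly-ramified-character argument: once $\cond(\chi)$ exceeds both the depth of $\pi_1$ and $\pi_2$ and the lengths of the relevant filtration on the support of $\phi$, orthogonality of additive characters against $\chi\circ\nu$ localizes the integration to an arbitrarily small neighborhood of $1\in G(F_v)$. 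On such a neighborhood any smooth $f_i$ is essentially constant, so $Z_v(s,f_i,\Phi_s)=f_i(1)\cdot J(\chi,s)$ for a universal $J(\chi,s)$ depending only on $\chi$ and $\Phi_s$. In the unitary case, one must instead work on a neighborhood of the center $Z(G(F_v))$, and the value of $f_i$ there is determined by $f_i(1)$ and the central character, which is why the hypothesis that $\pi_1,\pi_2$ share a central character enters.

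The same reduction, applied to $M_w\Phi_s$ in place of $\Phi_s$, gives $Z_v(1-s,\tilde f_i,M_w\Phi_s)=\tilde f_i(1)\cdot\tilde J(\chi,s)$ with $\tilde J$ again universal; for this one uses that, on a section supported deep in a single open cell, $M_w$ acts as a Fourier transform whose image still has the required support property after replacing $\chi$ by $\chi^{-1}$ (or its conjugate in the unitary case), provided $\cond(\chi)$ is large. Forming the ratio, the normalizations $f_i(1)$ and $\tilde f_i(1)$ cancel (they can be chosen to agree), and the resulting expression $\tilde J(\chi,s)/J(\chi,s)$ is manifestly independent of $\pi_i$. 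Hence $\gamma_v^{\PSR}(s,\pi_1\times\chi,r)=\gamma_v^{\PSR}(s,\pi_2\times\chi,r)$ for $\cond(\chi)>N(\pi_1,\pi_2)$.

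The main obstacle is the geometric and analytic bookkeeping at the chosen double coset: one must identify $\delta_0$ explicitly in each of the three families (symplectic, even/odd orthogonal, unitary) of classical groups, verify that the intertwining operator $M_w$ is compatible with the support-localization once $\chi$ is highly ramified, and check that the unramified calculations of $J$ and $\tilde J$ are actually universal. This is precisely the case-by-case work carried out in the cited papers of Rallis--Soudry and Brenner, and the unitary case is genuinely more delicate because of the larger center, which is what necessitates the hypothesis on central characters in the statement.
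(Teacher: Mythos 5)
The paper offers no proof of this proposition: it cites \cite{MR2195117, MR2406494} and writes ``We omit the details,'' adding only that \cite{MR2406494} contains a mistake (in both statement and proof) which is repaired by the central-character hypothesis. Your sketch is therefore really being compared against those references. As a summary of the Rallis--Soudry/Brenner doubling-method argument, the main skeleton is in place: restrict to sections of the degenerate principal series supported near the open $G\times G$-orbit, localize the zeta integral to a small neighborhood of the identity (or of the center, in the unitary case) via Gauss-sum-type cancellation when $\chi$ is highly ramified, and compare both sides of the local functional equation to extract a $\gamma$-factor independent of $\pi$ once the matrix coefficients are normalized at $1$.

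Two caveats. First, ``orthogonality of additive characters against $\chi\circ\nu$'' is a slip: $\chi$ is a multiplicative character, and the relevant cancellation is the vanishing of $\int \chi(x)\varphi(x)\,d^\times x$ when $\chi$ is more ramified than the smooth function $\varphi$. Second, and more seriously, the assertion that $M_w\Phi_s$ ``still has the required support property'' because $M_w$ ``acts as a Fourier transform'' on sections supported deep in a cell is not correct as stated and is in fact the crux of the matter: the unnormalized intertwining integral genuinely delocalizes the support across $P\backslash \tilde H$, and one cannot simply re-run the same localization on $M_w\Phi_s$. In \cite{MR2195117, MR2406494} this is handled either by computing $M_w\Phi_s$ explicitly on the chosen special sections or by substituting the integral formula for $M_w$ into the dual zeta integral and localizing the resulting iterated integral directly, with explicit Bruhat-stratum bookkeeping and a lower bound on $\cond(\chi)$ in terms of the data. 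Since you ultimately defer to those references for the ``geometric and analytic bookkeeping,'' this is not fatal, but the step you present as routine is exactly where the real work lies, and it is also where the error in \cite{MR2406494} that the authors flag is located.
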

We caution that unfortunately there is a mistake in \cite{MR2406494} (in both the statement and the proof),
but it is easy to fix the argument to obtain the correct statement above. We omit the details.

The argument of \cite{MR2195117} explicates $N(\pi_1,\pi_2)$. Moreover, by taking $\pi_2$ to be a representation induced
from a minimal Levi subgroup of $G$, in which case $\gamma_v^{\PSR}(s,\pi_2\times\chi,r)$ is computed explicitly in
\cite[Theorem 4]{MR2192828}, we conclude
\begin{corollary} \label{cor: PSRgammastability}
There exists a constant $C>0$ such that for any non-archimedean place $v$,
any irreducible representation $\pi$ of $G(F_v)$ and any character $\chi$ of $E_v^*$ such that $\cond(\chi)>C\level (\pi)$
we have
\[
\gamma_v^{\PSR}(s,\pi\times\chi,r)=\gamma_v^{\Tate}(s,\chi)^{m-1}\gamma_v^{\Tate}(s,\chi\chi_1).
\]
Here $\chi_1$ is trivial in the symplectic and odd orthogonal cases,
while in the even orthogonal case $\chi_1$ is the Hilbert symbol with
$(-1)^{m/2}D$, where $D$ is the discriminant of the quadratic form defining $G$
(we denote this quadratic character by $\tau_D$), and $\chi_1(x)=\omega_\pi(x/x^\tau)$ in the unitary case.
\end{corollary}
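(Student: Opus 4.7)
The strategy is to apply the stability result of Proposition \ref{prop: PSRgammastability} with $\pi_1 = \pi$ and $\pi_2$ chosen to be a principal series on $G(F_v)$, and then to invoke the explicit computation of $\gamma_v^{\PSR}(s, \pi_2 \times \chi, r)$ for such $\pi_2$ from \cite[Theorem 4]{MR2192828}. The role of the present corollary, compared to its parent Proposition \ref{prop: PSRgammastability}, is to make the threshold effective in terms of $\level(\pi)$.

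The first step is to select $\pi_2$. In the symplectic and orthogonal cases one may take $\pi_2$ to be an unramified principal series $\operatorname{Ind}_B^G(\mu)$ induced from a minimal parabolic $B$ with $\mu$ unramified. In the unitary case Proposition \ref{prop: PSRgammastability} requires $\pi_2$ to share the central character $\omega_\pi$ of $\pi$, so one instead chooses $\mu$ on the minimal torus whose restriction to the center equals $\omega_\pi$. Since the conductor of $\omega_\pi$ is bounded in terms of $\level(\pi)$, one can arrange $\mu$, and hence $\level(\pi_2)$, to be controlled linearly by $\level(\pi)$.

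The second step is the explicit evaluation of $\gamma_v^{\PSR}(s, \pi_2 \times \chi, r)$ in \cite[Theorem 4]{MR2192828}. Once $\cond(\chi)$ exceeds the conductor of $\mu$, this factor decomposes as a product of $m$ Tate-type gamma factors indexed by the characters comprising $\mu$; $m-1$ of them reduce to $\gamma_v^{\Tate}(s, \chi)$, and the remaining one acquires the additional twist by the character $\chi_1$ described in the statement (trivial in the symplectic and odd orthogonal cases, the Hilbert symbol with $(-1)^{m/2}D$ in the even orthogonal case, and $x \mapsto \omega_\pi(x/x^\tau)$ in the unitary case, where the latter enters precisely because of the central character matching in the previous paragraph).

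The main obstacle is the third step: arranging that the threshold $N(\pi, \pi_2)$ from Proposition \ref{prop: PSRgammastability} is bounded by $C \level(\pi)$ with $C$ uniform in $v$, $\pi$, and $\chi$. For this one inspects the proof of \cite{MR2195117}, where $N(\pi_1, \pi_2)$ is produced by expressing $\gamma_v^{\PSR}(s, \pi_i \times \chi, r)$ through an oscillatory integral involving a Schwartz function on the doubling space and a degenerate principal series section, and then exploiting the vanishing of the integral under sufficient twisting by $\chi$. The outcome is that $N$ depends only on the depths, hence the levels, of the two representations. Combined with the bound $\level(\pi_2) \ll \level(\pi)$ from the first step, this yields the required uniform $C$ and completes the argument.
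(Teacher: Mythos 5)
Your proposal reproduces the paper's argument: apply Proposition~\ref{prop: PSRgammastability} with $\pi_2$ a principal series induced from the minimal Levi (chosen with matching central character in the unitary case), evaluate $\gamma_v^{\PSR}(s,\pi_2\times\chi,r)$ via \cite[Theorem 4]{MR2192828}, and observe that the argument of \cite{MR2195117} makes the threshold $N(\pi_1,\pi_2)$ explicit in terms of the levels of $\pi_1$ and $\pi_2$, which combined with $\level(\pi_2)\ll\level(\pi)$ gives the uniform constant $C$. This is exactly the paper's route, merely spelled out in more detail than the paper's one-sentence justification.
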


Now we turn to Arthur's work on classical groups.
In order to state the result that we need, denote by $\Pi_{\aut}(\GL(n,\A))$ the set of irreducible representations of
$\GL(n,\A)$ which are weakly contained in $L^2(\GL(n,F)\bs\GL(n,\A))$.
These are the representations which are parabolically induced from some $\sigma\in\Pi_{\disc}(M(\A))$
where $M$ is a Levi subgroup of $\GL(n)$. As explained in \cite[\S 1.3]{MR3135650}, by the Jacquet--Shalika classification theorem \cite{MR618323, MR623137}
and the M{\oe}glin--Waldspurger description of the discrete spectrum of $\GL(n)$ \cite{MR1026752},
for any finite set $S$ of places of $F$, any $\pi\in\Pi_{\aut}(\GL(n,\A))$ is determined by the collection $t_{\pi_v}$, $v\notin S\cup S(\pi)$.
Given $\pi\in\Pi_{\disc}(G'(\A))$ and $\sigma\in\Pi_{\aut}(\GL (m, \A_E))$,
we say that $\sigma$ is the transfer of $\pi$ if for any $v\notin S(\pi)$, $\sigma_v$ is unramified and
$\can_{G'}(t_{\pi_v})=t_{\sigma_v}$.
By the above, this condition determines $\sigma$ and its central character is given by
\begin{equation} \label{eq: central character relation}
\omega_\sigma=\begin{cases}1,&\text{in the symplectic or odd orthogonal case},\\
\tau_D,&\text{in the even orthogonal case},\\
\omega_\pi(x/x^\tau),&\text{in the unitary case.}\end{cases}
\end{equation}

We will need the following consequence of Arthur's work (taking also \cite{MR2599005} into account).\footnote{We thank Colette M{\oe}glin for helpful discussions on Arthur's local results in \cite{MR3135650}.}
\begin{theorem}[\cite{MR3135650}, \cite{MR3338302}]
Any $\pi\in\Pi_{\disc}(G'(\A))$ admits a transfer $\Ar(\pi)\in\Pi_{\aut}(\GL(m,\A_E))$.
Consequently, for any $n\ge1$ and any $\pi'\in\Pi_{\cusp}(\GL (n,\A_E))$ we have
\begin{equation} \label{eq: productLfunction}
L^{S(\sigma)\cup S(\pi')}(s,\pi\times\pi',r)=L^{S(\sigma)\cup S(\pi')}(s,\Ar(\pi)\times\pi',T_{m,n}).
\end{equation}
\end{theorem}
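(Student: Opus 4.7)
The plan is to deduce the theorem directly from the endoscopic classification of the discrete automorphic spectrum of $G'$ proved by Arthur~\cite{MR3135650} in the symplectic and orthogonal cases and by Mok~\cite{MR3338302} in the unitary case, and then verify the $L$-function identity by a Satake-parameter computation at almost all places.

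First, given $\pi \in \Pi_{\disc}(G'(\A))$, I would invoke the global classification theorem of Arthur--Mok to attach to $\pi$ a formal global Arthur parameter
\[
\psi_\pi = \boxplus_{i=1}^{k}(\mu_i \boxtimes \nu_{d_i}),
\]
where each $\mu_i \in \Pi_{\cusp}(\GL(m_i, \A_E))$ is self-dual (respectively, conjugate self-dual) of the appropriate sign, each $\nu_{d_i}$ is the $d_i$-dimensional irreducible representation of $\SL(2,\C)$, and $\sum_i m_i d_i = m$. One then defines $\Ar(\pi)$ to be the isobaric sum of the associated Speh residual representations $\operatorname{Speh}(\mu_i, d_i)$. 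By M\oe glin--Waldspurger~\cite{MR1026752} each such Speh representation is an element of $\Pi_{\disc}(\GL(m_id_i, \A_E))$, so the isobaric sum lies in $\Pi_{\aut}(\GL(m, \A_E))$. Its central character is precisely the one prescribed in \eqref{eq: central character relation}, which is the required compatibility condition coming from the embedding $\can_{G'}$.

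The crucial point, which is part of the Arthur--Mok construction, is that at every place $v \notin S(\pi)$ one has the unramified matching
\[
t_{\Ar(\pi)_v} = \can_{G'}(t_{\pi_v}),
\]
since the Satake parameter of each Speh factor at $v$ is given by the Arthur $\SL(2,\C)$-twist of the Satake parameter of $\mu_{i,v}$, and these assemble via the embedding $\LG' \hookrightarrow\ {^L}\Res_{E/F}\GL(m)$ to precisely $\can_{G'}(t_{\pi_v})$. By the Jacquet--Shalika classification \cite{MR618323, MR623137} this relation determines $\Ar(\pi)$ uniquely among representations in $\Pi_{\aut}(\GL(m, \A_E))$, so $\Ar(\pi)$ is a well-defined transfer.

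The $L$-function identity \eqref{eq: productLfunction} then follows by an Euler product comparison. For every $v \notin S(\sigma) \cup S(\pi')$, the local factor on the right is
\[
L_v(s, \Ar(\pi)_v \times \pi'_v, T_{m,n}) = \det\bigl(1 - q_v^{-s}\, T_{m,n}(t_{\Ar(\pi)_v}, t_{\pi'_v})\bigr)^{-1},
\]
while the left-hand factor is the analogous expression with $T_{m,n}(\can_{G'}(t_{\pi_v}), t_{\pi'_v})$ in place of $T_{m,n}(t_{\Ar(\pi)_v}, t_{\pi'_v})$. These coincide by the transfer relation and the definition $r = T_{m,n} \circ (\can_{G'} \otimes \id_n)$, and taking the product over such $v$ yields the claimed identity. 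The main obstacle is not this formal Satake computation but invoking Arthur--Mok itself: one must appeal to the full stabilization of the twisted trace formula and to the extensive list of prerequisites cited in the introduction, and, in the non-tempered case, one must use that the classification extends to the entire discrete spectrum (not just its tempered part), which is where the results of \cite{MR2599005} enter.
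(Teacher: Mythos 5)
Your argument is correct and follows exactly the route the paper intends: the paper gives no proof, merely citing Arthur \cite{MR3135650}, Mok \cite{MR3338302}, and M{\oe}glin \cite{MR2599005}, and your write-up is a faithful unpacking of that citation --- extract the global Arthur parameter $\psi=\boxplus(\mu_i\boxtimes\nu_{d_i})$, form the isobaric sum of Speh representations (which lies in $\Pi_{\aut}(\GL(m,\A_E))$ by M{\oe}glin--Waldspurger), note the unramified matching $\can_{G'}(t_{\pi_v})=t_{\Ar(\pi)_v}$, invoke Jacquet--Shalika for uniqueness, and compare Euler products.

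One small imprecision: you attribute the role of \cite{MR2599005} to ``the classification extending to the entire discrete spectrum,'' but Arthur's book already covers the full discrete spectrum. The purpose of that citation (and of the footnote thanking M{\oe}glin for discussions on Arthur's local results) is rather to certify local properties of Arthur packets --- in particular, the compatibility between the Langlands parameter of an unramified packet member and the localized Arthur parameter $\psi_v$ --- which is what guarantees the unramified Satake matching $\can_{G'}(t_{\pi_v})=t_{\Ar(\pi)_v}$ at $v\notin S(\pi)$ when $\psi$ is not tempered. Also worth noting: for even orthogonal $G'$ the global classification is only defined modulo the outer automorphism $\theta$, but $\Ar(\pi)$ is still well-defined since $\theta$-conjugate Satake parameters have the same image under $\can_{G'}$.
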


Thanks to the work of M{\oe}glin and others, a great deal is known about the local representations $\pi_v$ in terms of $\Ar(\pi)$.
For our purposes we will only need to know the preservation of $\gamma$-factors, namely that
\begin{equation} \label{eq: presofgamfactors}
\gamma^{\PSR}(s,\pi_v\times\chi_v,r)=\gamma^{\GJ}(s,\Ar(\pi)_v\times\chi_v,T_{m,1})
\end{equation}
for all $v$ and characters $\chi_v$ of $E_v^*$. If $\pi_v$ is unramified, this follows from
the multiplicativity properties of the $\gamma$-factors on both sides (\cite{MR0342495, MR2192828}).
If $\cond(\chi_v)$ is sufficiently large (with respect to $\pi_v$ and $\Ar(\pi)_v$), then
\eqref{eq: presofgamfactors} follows from the stability properties of the $\gamma$-factors
(\cite{MR787183} for the left-hand side and Corollary \ref{cor: PSRgammastability} for the right-hand side) together with the description of the
central character of $\Ar(\pi)$.
The general case of \eqref{eq: presofgamfactors} for finite $v$ follows from the special cases above by the argument of \cite[Theorem 4.1]{MR825841},
namely, by
using \eqref{eq: productLfunction} (for $n=1$)
and comparing the functional equations
\[
L^S(s,\pi\times\chi,r)=\big(\prod_{v\in S}\gamma_v^{\PSR}(s,\pi_v\times\chi_v,r)\big)L^S(1-s,\pi\times\chi,r^\vee)
\]
and
\[
L^S(s,\Ar(\pi)\times\chi,T_{m,1})=\big(\prod_{v\in S}\gamma_v^{\GJ}(s,\Ar(\pi)_v\times\chi_v,T_{m,1})\big)L^S(1-s,\Ar(\pi)\times\chi,T_{m,1}^\vee)
\]
for suitable Hecke characters $\chi$ (with $S=S(\pi)\cup S(\chi)$).
For archimedean $v$ one can argue in a similar way using Lemma \ref{lem: torus}
(with $T$ being the multiplicative group), replacing Corollary \ref{cor: PSRgammastability} by the fact that
for any $s\in\C$ we have
\[
\lim_{\abs{t}\rightarrow\infty}\gamma^{\GJ}(s,\Ar(\pi)_v\times\abs{\cdot}^{\iii t},T_{m,1})/
\gamma^{\PSR}(s,\pi_v\times\abs{\cdot}^{\iii t},T_{m,1})=1.
\]
Indeed, this follows from Stirling's formula, the description of $\gamma^{\GJ}$ and $\gamma^{\PSR}$ in the archimedean case
and \eqref{eq: central character relation}.

Finally, we prove property \eqref{eq: locndbnd}.
By \eqref{eq: presofgamfactors} it is enough to show that
\begin{equation} \label{eq: laosi}
\cond(\Ar(\pi)_v)\ll\log_q\level(\pi_v)
\end{equation}
for every finite place $v$. We may assume that $\level(\pi_v)>0$, i.e., that $\pi_v$ is ramified,
since otherwise $\Ar(\pi)_v$ is also unramified.
We use Corollary \ref{cor: PSRgammastability} with any $\chi_v$ of conductor $C\level(\pi_v)+1$.
We obtain
\[
\expcond^{\PSR}(\pi_v\times\chi_v)\ll\log_q\level(\pi_v)
\]
and hence by \eqref{eq: presofgamfactors},
\[
\cond(\Ar(\pi)_v\otimes\chi_v)\ll\log_q\level(\pi_v).
\]
Since $\cond(\Ar(\pi)_v)\le\cond(\Ar(\pi)_v\otimes\chi_v)+m\cond(\chi_v)$ we deduce that
$\cond(\Ar(\pi)_v)\ll\log_q\level(\pi_v)$ as required.






We also note that by \eqref{eq: archstd}, and Remark \ref{rem: archcc} for $v\in S_\infty$ we have
\begin{equation} \label{eq: laosiarch}
\param(\Ar(\pi)_v)\ll\infcond_v^{\Std_m}(\Ar(\pi)_v)^2=\infcond_v^{\PSR}(\pi_v)^2\ll\param(\pi_v)^m.
\end{equation}

\subsection{}
Finally, consider $G=G'\times\GL (n)$ where $G'$ is a quasi-split classical group and $r=T_{m,n}\circ(\can_{G'}\otimes\id_n)$.
We argue as in \S\ref{sec: RSconvda}. Namely, for $\pi\in\Pi_{\disc}(G'(\A))$ and $\pi'\in\Pi_{\disc}(\GL(n,\A_E))$ we take
\[
\gamma_v^{\JPSS\circ\Ar}(s,\pi\times\pi',r):=\gamma_v^{\JPSS}(s,\Ar(\pi)_v\times\pi'_v,T_{m,n}).
\]
By using \eqref{eq: laosi} and \eqref{eq: laosiarch}, properties \GF, \eqref{eq: archbnd} and (CC') hold for $\gamma_v^{\JPSS\circ\Ar}$ since they hold for $\gamma_v^{\JPSS}$.

\begin{remark}
The same argument will give properties (FE+) and \CC\ for $(G,r)=(G'\times G'',r=T_{m,n}\circ(\can_{G'}\otimes\id_n))$, where $G'$ is a quasi-split classical group
and $G''$ is an inner form of $\Res_{E/F}\GL (n)$.
\end{remark}

\begin{remark} \label{rem: extend to non-qs}
Once Arthur's work is extended to general classical groups and their inner forms,
Theorem \ref{thm: ABC} will hold for them as well, with the same proof.
(The stability argument of \cite{MR2195117} should carry over to inner forms without too much trouble.)
In fact, one can hope that the methods of Arthur carry over to the $\operatorname{GSpin}$ groups (as well as their inner forms).
By the previous remark, once this is done, Theorem \ref{thm: ABC} and its proof will extend to this case as well.
\end{remark}

\begin{remark}
One may contemplate whether Arthur's work (which invokes the full force of the stable twisted trace formula) is absolutely
necessary for the question at hand.
A possible different approach would be to use the Rankin--Selberg integrals that were studied in \cite{MR1357823, MR2906909}.
In the case of generic representations, a great deal is known about these $L$-functions.
In the general case, it seems that more input is necessary to address their finer analytic properties.
We also mention the recent preprint \cite{1601.08240}, where a more uniform approach for these $L$-functions is laid out.
It is yet to be seen whether this sheds any light on the analytic issues at hand.
\end{remark}

\section{The exceptional group $G_2$} \label{SectionExceptional}

Finally, the last of Theorem \ref{thm: ABC} follows from Theorem \ref{thm: goodLfunctinos}
(for $n_1=2$, $n_2=1$) and the following

\begin{theorem} \label{TheoremG2}
The pair
\[
(G=\GL(2),r=\Sym^3\otimes(\wedge^2)^{-1}=\Sym^3\otimes(\det)^{-1}),
\]
where $\Sym^3$ is the (four-dimensional) symmetric cube representation of $\GL (2)$,
satisfies properties (FE+) and (CC):
\end{theorem}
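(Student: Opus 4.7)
The plan is to reduce to the previously established cases via functoriality. By Lemma \ref{lem: cuspenuf}, it suffices to verify the conditions of Corollary \ref{cor: enufsomegamma} for each triple $(M, \pi, r')$ where $M$ is a Levi subgroup of $\GL(2)$, $\pi \in \Pi_{\cusp}(M(\A))$, and $r'$ is an irreducible constituent of $r|_{{}^L M}$. When $M = T$ is the diagonal torus, ${}^L T = \C^\times \times \C^\times$ and the restriction $r|_{{}^L T}$ decomposes as a direct sum of four one-dimensional characters (namely $(x,y) \mapsto x^2 y^{-1}, x, y, x^{-1} y^2$). The associated $L$-functions are therefore Hecke $L$-functions, for which (FE+) and (CC) are classical (Tate's thesis).

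The substantive case is $M = \GL(2)$ with $\pi \in \Pi_{\cusp}(\GL(2, \A))$. Here I would invoke the symmetric cube functoriality of Kim--Shahidi: there exists an automorphic representation $\Sym^3 \pi \in \Pi_{\aut}(\GL(4, \A))$ whose Satake parameters at unramified places are the symmetric cubes of those of $\pi$. Writing $\Sym^3 \pi$ as an isobaric sum $\boxplus_i \tau_i$ of cuspidal representations $\tau_i$ of $\GL(n_i, \A)$ (with $\sum n_i = 4$) and using $\det \pi = \omega_\pi$, we have
\[
L^{S(\pi)}(s, \pi, r) = \prod_i L^{S(\pi)}(s, \tau_i \otimes \omega_\pi^{-1}, \Std_{n_i}).
\]
I would then define local factors $\gamma_v^\bullet(s, \pi, r) := \gamma_v^{\GJ}(s, \Sym^3 \pi_v \otimes \omega_{\pi,v}^{-1}, \Std_4)$ via the Godement--Jacquet theory on $\GL(4)$. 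Properties \eqref{eq: global FE}, \GF, and \eqref{eq: archbnd} are immediate from the corresponding properties of $\gamma_v^{\GJ}$ on $\GL(4)$ applied to $\Sym^3 \pi \otimes \omega_\pi^{-1}$; here one uses that Kim--Shahidi's construction is compatible with the local Langlands correspondence at archimedean places, so the infinitesimal character of $\Sym^3 \pi_v$ is the symmetric cube of that of $\pi_v$. Property (FE+), specifically the boundedness of the number of poles, follows from Theorem \ref{thm: goodLfunctinos} applied termwise to each $\tau_i \otimes \omega_\pi^{-1}$ (a Rankin--Selberg $L$-function for $\GL(n_i) \times \GL(1)$).

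The main obstacle will be condition \eqref{eq: locndbnd}, i.e., the local conductor bound
\[
\cond(\Sym^3 \pi_v \otimes \omega_{\pi,v}^{-1}) \ll \log_{q_v} \level_v(\pi_v)
\]
at finite places $v$. This is a purely local question on $\GL(2, F_v)$. Since the Kim--Shahidi lift is compatible with the local Langlands correspondence for $\GL(2)$ -- which is known by the work of Kutzko -- the Weil--Deligne parameter of $\Sym^3 \pi_v$ is the symmetric cube of that of $\pi_v$, and the conductor of $\Sym^3 \phi_v$ is bounded by an absolute constant times the conductor of $\phi_v$. Combining this with Lemma \ref{lem: uprcondva} applied to $\GL(2)$ yields the desired bound. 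Alternatively, mirroring the argument of \S \ref{SectionClassical} for classical groups, one can exploit stability of the local $\Sym^3$ $\gamma$-factor under highly ramified twists (established by Kim--Shahidi) and compare with the explicit unramified $\gamma$-factor to deduce the same bound. Once this is in place, Corollary \ref{cor: enufsomegamma} concludes the proof.
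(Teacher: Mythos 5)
Your approach is correct in outline but takes a genuinely different and heavier route than the paper. You invoke the Kim--Shahidi symmetric cube lift $\Sym^3\pi\in\Pi_{\aut}(\GL(4,\A))$ and then need full \emph{local compatibility} of this lift with the local Langlands correspondence at all places (archimedean and ramified non-archimedean) to identify the $\gamma$-factors, the archimedean parameters, and especially to run the conductor bound $\cond(\Sym^3\pi_v)\ll\cond(\pi_v)$. The paper deliberately avoids this: it only needs the Gelbart--Jacquet $\Sym^2$ lift to $\GL(3)$ and Shahidi's multiplicativity formula for $\gamma$-factors (\cite[Theorem 3.5]{MR1070599}, applied to $G_2$), which gives
\[
\gamma^{\Sh}(s,\pi,r)=\gamma^{\JPSS}(s,\pi\times\Sym^2\pi,\Std_2^\vee\times\Std_3)/\gamma^{\GJ}(s,\pi,\Std_2^\vee),
\]
so that \eqref{eq: locndbnd} falls out of the Bushnell--Henniart Rankin--Selberg bound and the $\GL(2)\times\GL(2)$ case applied to $\Sym^2\pi$. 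Moreover, the paper shows that one can bypass pole information entirely by writing $r=\Std_2^\vee\otimes\Sym^2-\Std_2$ and invoking Remark~\ref{rem: wABC}. Your sketch also asserts, without justification, that $\cond(\Sym^3\phi_v)$ is bounded by a universal constant times $\cond(\phi_v)$; this is true, but the natural proof is precisely the Clebsch--Gordan decomposition $\Std_2\otimes\Sym^2=\Sym^3\oplus\Std_2$ combined with the tensor-product conductor inequality --- in other words, the Galois-side version of the very $\gamma$-factor identity the paper uses. So your route is valid (the needed local compatibility is in the literature), but it substitutes a stronger functoriality input where the paper gets by with $\Sym^2$ plus formal multiplicativity, and the key conductor step in your sketch is a gap that, once filled, reproduces the paper's computation in disguise.
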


Of course, it is enough to consider $\pi\in\Pi_{\cusp}(\GL(2,\A))$.
Once again by Remark \ref{rem: Shahidi} the local factors $\gamma_v^{\Sh}(s,\pi_v,r)$ are defined and satisfy
\GF\ and \eqref{eq: comparch}.
The poles of $L^{S(\pi)}(s,\pi,r)$ (and in fact, of the completed $L$-function) were analyzed by Kim-Shahidi in \cite{MR1726704}.
Alternatively, we can analyze the poles using the symmetric cube lift to $\GL (4)$, also due to Kim--Shahidi \cite{MR1923967}.
Even better, using Remark \ref{rem: wABC} and the fact that $r=\Std_2^\vee\otimes\Sym^2-\Std_2$,
we do not need any information about the poles of $L^{S(\pi)}(s,\pi,r)$ (but we need
to know the existence of the symmetric square lift from $\GL (2)$ to $\GL (3)$ \cite{MR533066} to infer that
$\Std_2\otimes\Sym^2$ satisfies (FE+)).

Finally, we prove \eqref{eq: locndbnd}.
Combining \cite[Proposition 2.2]{MR1002045} and \cite[Theorem 3.5]{MR1070599} (applied to the exceptional group $G_2$),
we have
\[
\gamma^{\Sh}(s,\pi,r)=\gamma^{\JPSS}(s,\pi\times\Sym^2\pi,\Std_2^\vee\times\Std_3)/\gamma^{\GJ}(s,\pi,\Std_2^\vee),
\]
where $\Sym^2\pi$ is the symmetric square lift of $\pi$ to $\GL (3)$.
Thus,
\[
\expcond^{\Sh}(\pi,r)\le\cond(\pi^\vee\times\Sym^2\pi)\le 3\cond(\pi)+2\cond(\Sym^2\pi).
\]
Also,
\[
\gamma^{\GJ}(s,\Sym^2\pi)=\gamma^{\JPSS}(s,\pi\times\pi)/\gamma^{\Tate}(s,\omega_\pi),
\]
and therefore
\[
\cond(\Sym^2)\le\cond(\pi\times\pi)\le 4\cond(\pi).
\]
We conclude that
\[
\expcond^{\Sh}(\pi,r)\le 11\cond(\pi).
\]

\begin{remark}
For higher rank exceptional groups, some of the $L$-functions on the list of \cite{MR0419366} can be studied
by Rankin--Selberg integrals, at least under a genericity assumption (e.g., \cite{MR1795291, MR1177328, MR2451220, MR1344132},
to mention a few).
One can also take into account Remark \ref{rem: wABC} to study further cases.
However, more work has to be done in order to show that other exceptional groups satisfy properties (L) and (TWN+).
We will not pursue this matter here any further.
\end{remark}


\providecommand{\bysame}{\leavevmode\hbox to3em{\hrulefill}\thinspace}
\providecommand{\MR}{\relax\ifhmode\unskip\space\fi MR }
\providecommand{\MRhref}[2]{%
  \href{http://www.ams.org/mathscinet-getitem?mr=#1}{#2}
}
\providecommand{\href}[2]{#2}

\end{document}